\documentclass[a4paper]{amsart} 
\usepackage[a4paper]{geometry}
\usepackage[utf8]{inputenc}
\usepackage{babel}
\usepackage{amsthm,amsmath,amssymb,amsfonts,amscd}
\usepackage{mathtools}
\usepackage{hyperref}
\usepackage{xcolor}
\usepackage{subfiles}
\usepackage{enumerate}
\usepackage{comment}

\newtheorem{theorem}{Theorem}[section]
\newtheorem{lemma}[theorem]{Lemma}
\newtheorem{prop}[theorem]{Proposition}
\theoremstyle{definition}
\newtheorem{definition}{Definition}
\theoremstyle{remark}
\newtheorem{remark}{Remark}

\newcommand{\R}{\ensuremath{\mathbb{R}}}

\newcommand{\Lipa}[1][\gamma]{\ensuremath{\mathcal{C}^#1}} 
\newcommand{\Na}[2][\gamma]{\ensuremath{\left\|#2\right\|_{#1}}} 
\newcommand{\SNa}[2][\gamma]{\ensuremath{\left|#2\right|_{#1}}} 
\newcommand{\Np}[2][\infty]{\ensuremath{\left\|#2\right\|_{L^#1}}}
\newcommand{\NUa}[2][\gamma]{\ensuremath{\left\|#2\right\|_{1,#1}}} 
\newcommand{\LipUa}[1][\gamma]{\ensuremath{\Lipa[{{1,\gamma}}]}} 
\newcommand{\LPet}[1][\gamma]{\ensuremath{c^{#1}}} 
\newcommand{\LUPet}[1][\gamma]{\ensuremath{c^{1,#1}}} 

\newcommand{\SU}{\ensuremath{S^{(1)}}}
\newcommand{\SD}{\ensuremath{S^{(2)}}}

\newcommand{\N}[1][\alpha]{\ensuremath{\mathcal{N}_{#1}}}
\newcommand{\F}[1][\alpha]{\ensuremath{\mathcal{F}_{#1}}}

\usepackage{xcolor}

\author{Marc Magaña}
\address{Departament de Matemàtiques -- Universitat Autònoma de Barcelona\\
08193 Bellaterra, Barcelona, Spain}
\email{marc.magana@uab.cat}

\author{Joan Mateu}
\address{Departament de Matemàtiques -- Universitat Autònoma de Barcelona\\
08193 Bellaterra, Barcelona, Spain}

\address{Centre de Recerca Matemàtica\\
08193 Bellaterra, Barcelona, Spain}

\email{joan.mateu@uab.cat}

\author{Joan Orobitg}
\address{Departament de Matemàtiques -- Universitat Autònoma de Barcelona\\
08193 Bellaterra, Barcelona, Spain}
\email{joan.orobitg@uab.cat}

\title[Vortex Patch Regularity for QGSW]{The regularity of the boundary of vortex patches for the quasi-geostrophic shallow-water equations}

\begin{document}
\begin{abstract}
We prove the persistence of boundary smoothness of vortex patches for the quasi-geostrophic shallow-water (QGSW) equations. The QGSW equations generalize the Euler equations by including an additional parameter, the Rossby radius $\varepsilon^{-1}$, which modifies the relationship between the streamfunction and the (potential) vorticity. In addition, we prove that solutions of the QGSW equations converge locally in time to the corresponding Euler solutions as $\varepsilon \to 0$ in little Hölder spaces.
\end{abstract}

\maketitle

\tableofcontents
\section{Introduction}
The quasi-geostrophic shallow-water equations, commonly used to track the dynamics of the atmospheric and oceanic circulation at large-scale motion, is a two-dimensional (2D) active scalar equation given by 
\begin{equation*}\label{def:QGSW}\tag{QGSW}
    \left\{\begin{aligned}
        &\partial_tq + v\cdot \nabla q=0, \quad (t,x)\in \R^+\times \R^2\\ 
        &v= \nabla^\perp(\Delta-\varepsilon^2)^{-1}q, \\
        &q(0,\cdot)=q_0,
    \end{aligned}\right.
\end{equation*}
where $\nabla^\perp= (-\partial_2, \partial_1)^t$ denotes the orthogonal gradient. The quantities involved are the divergence-free velocity field $v$, the potential vorticity $q$, which is a scalar function, and a real parameter $\varepsilon$. In the literature, when $\varepsilon>0$ it is known as the inverse Rossby radius and is defined by $$\varepsilon=\frac{\omega_c}{\sqrt{gH}},$$ where $g$ is the gravitational constant, $H$ is the mean active layer depth and $\omega_c$ is the Coriolis frequency, assumed to be constant. 

These equations are derived asymptotically from the rotating shallow water equations, in the limit of rapid rotation and weak variations
of the free surface. For a comprehensive overview, see \cite{plotka2011shallow,vallis2017atmospheric} and the references therein. For $\varepsilon=0,$ the equations reduce to the well-known vorticity formulation of the 2D Euler equation, for which numerous results are known. 

To give some examples, Yudovich \cite{yudovich} proved that the 2D Euler vorticity equation is well-posed in $L^\infty_c$, the space of bounded measurable functions with compact support. A vortex patch is a special weak solution of this equation when the initial condition is the characteristic function of a bounded domain $\Omega_0$. Since the vorticity equation is a transport equation, vorticity is conserved along trajectories, meaning that at any time $t$, it remains the characteristic function of some domain $\Omega_t$. 

A significant problem posed in the 1980s was whether the boundary smoothness of the initial domain persists over time. Specifically, if $\Omega_0$ has a boundary with derivatives in the Hölder class, that is $\partial\Omega_0\in\LipUa$ for $0 < \gamma < 1$, one would like $\Omega_t$ to maintain a boundary of the same class indefinitely. Chemin \cite{chemin1993persistance} proved that boundary regularity persists for all times using paradifferential calculus. Shortly after, Bertozzi and Constantin \cite{bertozzi1993global} provided an alternative proof based on methods of classical analysis with a geometric approach.

Recently, \eqref{def:QGSW} equations have garnered interest in the mathematical community, as evidenced by works such as \cite{renault,hmidi2021time,roulley2023vortex,YAN20241}. However, the specific aspects of solution regularity and vortex patch regularity, which have been thoroughly studied for the 2D Euler equations, had not yet been explicitly addressed for the \eqref{def:QGSW} equations. Our paper delves into these issues, exploring the regularity of solutions and vortex patches within the \eqref{def:QGSW} framework.

From now on, assume \(\varepsilon > 0\) for simplicity. The results remain valid for \(\varepsilon < 0\) by replacing \(\varepsilon\) with \(|\varepsilon|\). By using that the fundamental solution of the operator $(\Delta-\varepsilon^2)$ is $-\frac{1}{2\pi}K_0(\varepsilon|x|),$ one can write the velocity $v$ in the \eqref{def:QGSW} as \begin{equation}\label{def:kernel}
    v = K*q, \text{ with } K(x)=\frac{\varepsilon}{2\pi}\frac{x^\perp}{|x|} K_1(\varepsilon|x|),
\end{equation}
where $K_0$ and  $K_1$ are modified Bessel functions, which we will introduce in Section \ref{sec:preliminaries} (see \cite{renault} for the details). 

The main results of this paper are the following. We first 
establish a complete well-posedness theory for the \eqref{def:QGSW} equation: existence and uniqueness of solutions for initial data $q_0 \in \Lipa_c$, together with a Yudovich-type theory guaranteeing global well-posedness in $L^\infty_c$. Building on this, we prove the following vortex patch result.

 \begin{theorem}\label{th:vortex-patch}
         Let $\Omega_0$ be a bounded domain with boundary of class $C^{1,\gamma},\;0<\gamma<1.$ Then the 
         \begin{equation*}\tag{QGSW}
    \left\{\begin{aligned}
        &\partial_tq + v\cdot \nabla q=0, \quad (t,x)\in \mathbb{R}^+\times \mathbb{R}^2\\ 
        &v= K * q, \\
        &q(0,\cdot)=\chi_{\Omega_0},
    \end{aligned}\right.
\end{equation*} has a unique weak solution of the form $q(x,t)=\chi_{\Omega_t}(x),$ with $\Omega_t$ a bounded domain with boundary of class $C^{1,\gamma}.$ 
    \end{theorem}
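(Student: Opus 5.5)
Our plan is to follow the geometric strategy of Bertozzi--Constantin (and Chemin's striated regularity viewpoint), treating the \eqref{def:QGSW} kernel $K$ as a perturbation of the Biot--Savart kernel. Existence and uniqueness of a weak solution $q=\chi_{\Omega_t}$ come from the Yudovich-type theory for $L^\infty_c$ data established earlier. Indeed $q$ is transported by the flow $X(t,\cdot)$ of $v$, so $q(t)=\chi_{\Omega_t}$ with $\Omega_t=X(t,\Omega_0)$. Only the persistence of $C^{1,\gamma}$ regularity of $\partial\Omega_t$ has to be proved.

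Write $\partial\Omega_0=\{\varphi_0=0\}$ with $\varphi_0\in C^{1,\gamma}$ and $\nabla\varphi_0\neq0$ on $\partial\Omega_0$, and set $\varphi(t)=\varphi_0\circ X^{-1}(t)$. Then $W=\nabla^\perp\varphi$ is tangent to $\partial\Omega_t$ and satisfies
\[
\partial_tW+v\cdot\nabla W=\nabla v\,W.
\]
It suffices to bound $\Na{W(t)}$ and $\inf_{\partial\Omega_t}|W|$ for all times.

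The key decomposition is $K=K_E+R$, where $K_E(x)=\frac{1}{2\pi}\frac{x^\perp}{|x|^2}$. From the asymptotics $K_1(r)=\frac1r+\frac r2\log r+O(r)$ as $r\to0$ and exponential decay at infinity, the remainder $R$ is smooth away from $0$, satisfies $|R(x)|\lesssim\varepsilon^2|x|\,|\log(\varepsilon|x|)|$ near $0$, and $|\nabla R(x)|\lesssim \varepsilon^2|\log(\varepsilon|x|)|$ is locally integrable. Hence $\nabla(R*q)$ is bounded and log-Lipschitz for $q\in L^1\cap L^\infty$, with constants depending only on $\Np{q}$, $\|q\|_{L^1}$ and $\varepsilon$. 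These norms are conserved. Moreover $\nabla v = \nabla(K_E*q)+\nabla(R*q)$, where the first term is a Calderón--Zygmund operator with even kernel plus a multiple of $q$.

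Next we establish the three Bertozzi--Constantin estimates.
\begin{enumerate}[(i)]
\item The logarithmic inequality
\[
\Np{\nabla v}\le C\Big(1+\log\big(1+\Na{W}/|W|_{\inf}\big)\Big).
\]
For the Euler part we quote the geometric lemma. For the $R$ part the bound is uniform.
\item The commutator estimate
\[
\Na{W\cdot\nabla v}\le C\Np{\nabla v}\Na{W},
\]
obtained from $W\cdot\nabla v=\mathrm{div}(v\otimes W)$ and the cancellation of the even kernel along the direction $W$ for the Euler part. For the $R$ part, $\nabla(R*q)\in C^\gamma$ directly, since $\nabla R$ is integrable and its modulus of continuity is controlled.
\item Transport estimates along the flow, giving
\[
\frac{d}{dt}\Big(\Na{W}+|W|_{\inf}^{-1}\Big)\le C\Np{\nabla v}\Big(\Na{W}+|W|_{\inf}^{-1}\Big).
\]
\end{enumerate}
Combining (i)--(iii) yields
\[
\frac{d}{dt}Y\le C\,Y\log(e+Y),
\]
a double exponential bound, hence global persistence.

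The main obstacle is (i)--(ii) for the nonhomogeneous kernel. The key point is that $\nabla K$ is no longer homogeneous of degree $-2$, so the Bertozzi--Constantin cancellation on half-disks is exact only for $K_E$. We will handle this by verifying quantitatively that $\nabla R$ is integrable with a Dini-type modulus, so $R$ contributes only bounded, $C^\gamma$ terms. Finally, $C^{1,\gamma}$ regularity of $\partial\Omega_t$ follows because it is a regular level set of $\varphi(t)\in C^{1,\gamma}$.
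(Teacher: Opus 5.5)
Your argument is correct, but at the decisive step it takes a different route from the paper. In the global estimates the paper never separates off the Euler kernel. Instead it splits the \emph{gradient}, $\nabla K=\SU+\SD$ (Remark~\ref{remark:1derivades}), into a part with zero mean on circles and Calder\'on--Zygmund bounds plus an $L^1$ part. It then re-proves the Euler estimates for this nonhomogeneous kernel: $\SD$ only adds a constant in Proposition~\ref{prop:geometric}, and Proposition~\ref{prop:commutador} is redone via Cotlar's lemma and the Mateu--Orobitg--Verdera lemma for $\Np{H*\chi_\Omega}$.

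You split the kernel itself, $K=K_E+R$, and quote the Euler geometric lemma and commutator estimate verbatim for $v_E=K_E*\chi_{\Omega_t}$. This is legitimate, since these are static estimates for a fixed domain and a fixed divergence-free tangent field; it does not matter that $\Omega_t$ is transported by $v$ rather than by $v_E$. The remainder is most cleanly handled by observing that $\nabla^2R\in L^1(\R^2)$, because $|\nabla^2R|\lesssim\varepsilon^2|x|^{-1}$ near $0$ and $|\nabla^2R|\lesssim|x|^{-3}$ at infinity. Hence $\nabla v_R$ is globally Lipschitz with constant at most $\|\nabla^2R\|_{L^1}$. Note that $\nabla R$ itself is only locally integrable (it decays like $|x|^{-2}$), so your bound on $\Np{\nabla v_R}$ also uses $|\Omega_0|$.

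Your route is shorter and fully modular: all the nonhomogeneity is pushed into a $C^{1,1}$ perturbation, and no singular-integral theory has to be redone. The paper's route instead gives constants uniform in $\varepsilon$, since the bounds on $\SU$ and $\|\SD\|_{L^1}$ are scale invariant. Your bound on $\Np{\nabla v_R}$ depends on $\varepsilon$ (it can grow like $\log^+(\varepsilon^2|\Omega_0|)$), which is irrelevant here because $\varepsilon$ is fixed. The paper does use your splitting elsewhere: in the local CDE theory, where $I_1$ in \eqref{eq:decomposicioF} is exactly the Euler operator, and in Lemma~\ref{lemma:FepsToF0}.

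Two things to tidy. First, your estimates are a priori: step (iii) presupposes $\varphi(t)\in\LipUa$ on some time interval. You therefore still need local-in-time regularity plus a continuation criterion. The paper obtains this from Picard's theorem applied to the contour dynamics equation (Proposition~\ref{prop:CDEpicard}); alternatively, you could regularize the data and pass to the limit with your uniform bounds. Second, the $R$-part contributes $\Na{\nabla v_R}\Na{W}$ independently of $\Np{\nabla v}$, so the commutator bound should read $\Na{\nabla v\,W}\le C(1+\Np{\nabla v})\Na{W}$. This does not affect the Gr\"onwall closure.
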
   

{It is worth noting that the persistence of boundary smoothness of vortex patches has been studied for a broad families of nonlinear transport equations} like the one above, in which the velocity is given by a convolution of the scalar function with a kernel, $K$, for different kernels. For instance, when $K=\nabla N,$ where $N$ is the fundamental solution of the Laplacian one has the Aggregation patch problem, which was studied in \cite{bertozzi2012aggregation,bertozzi2016regularity}. More recently, Cantero, Mateu, Orobitg and Verdera in \cite{cantero2021regularity} extended this result to a broader class of kernels that are smooth away from the origin, homogeneous and odd, and later revisited 
by Verdera \cite{VerderaRevisited}, who provided a different approach in the two-dimensional setting.

Note that the kernel in \eqref{def:kernel} is non-homogeneous, and therefore falls outside the framework 
of \cite{cantero2021regularity,VerderaRevisited} for convolution models.  Our key idea to overcome this non-homogeneity is to decompose the gradient of the kernel into two parts: one with zero mean on spheres that behaves like a Calderón–Zygmund operator, and a much more regular (integrable) remainder. The first part gives rise to a commutator and admits exactly the same structure as in the homogeneous case, while the second part contributes only an additive constant to the logarithmic inequality (see Remark~\ref{remark:1derivades} and the discussion below).

{A second main contribution of this paper is a rigorous convergence result connecting \eqref{def:QGSW} to the 2D Euler equations. In Section \ref{sec:limit}, we study the behavior of solutions in the limit \(\varepsilon \to 0\). Theorem \ref{th:limitRegular} establishes that, for initial data compactly supported in the little Hölder space \(\LPet_c\) (the closure of $\mathcal{C}^\infty$ in the usual Hölder norm), the unique solutions of \eqref{def:QGSW} converge in the \(\Lipa\) norm to the corresponding Euler solution for the same initial data on a uniform time interval as $\varepsilon \to 0$. This provides a rigorous justification of the formal limit between the two models. 
}

The outline of this paper is as follows. In Section \ref{sec:preliminaries}, we introduce the modified Bessel $K_n$ functions, state some of their important properties, and recall the definitions of Hölder spaces. We also present relevant results concerning these spaces and how our kernel acts on them. In Section \ref{sec:regular} we prove existence and uniqueness of solutions for initial data, $q_0$, in the Hölder class $\Lipa,$ which we use later in Section \ref{sec:weak} to show existence and uniqueness of weak solutions. Later, in Section \ref{sec:patch} we focus on the regularity of the vortex patch boundary and prove Theorem \ref{th:vortex-patch}.{Finally, in Section \ref{sec:limit}, building on the well-posedness theory developed in Section \ref{sec:regular}, we prove Theorem~\ref{th:limitRegular}.}

{\textbf{A note on related work.} After completing this work, we were informed by one of the authors of the very recent job by Tan, Xue, and Xue \cite{TXX2025}, in which the authors study a broad family of active scalar equations. As noted in Remark 2 of that work, the \eqref{def:QGSW} equation fits within their framework, and they obtain results analogous to our Theorems \ref{th:vortex-patch} and \ref{th:yudovichQGSW}, although the methods are quite different. We thank them for bringing their work to our attention.}

\section{Preliminaries}\label{sec:preliminaries}
\subsection{Modified Bessel functions}
Solutions to $$z^2\frac{\partial^2 w}{\partial z^2} + z\frac{\partial w}{\partial z} - (z^2+\nu^2)w=0$$ are $I_{\pm\nu}(z)$ and $K_\nu(z),$ which are referred to as modified Bessel Functions. We shall collect some useful information about these {complex-valued} functions. The expressions below are taken from \cite[pp. 374--378]{abramowitz1988handbook}, unless otherwise noted. For further details, see \cite[Chapter 9]{abramowitz1988handbook} and \cite{watson1922treatise}. {For the sake of generality, we state the formulas in their standard complex form as in the cited references, although throughout this work we will only make use of them for real and positive values of~$z$.}

When $\nu=n\in \mathbb{Z}$ we have
\begin{equation}\label{def:besselK}
\begin{split}
    K_n(z)=\frac{1}{2}\left(\frac{z}{2}\right)^{-n}\sum_{k=0}^{n-1} \frac{(n-k-1)!}{k!}\left(\frac{-z^2}{4}\right)^{k} + (-1)^{n+1}\ln\left(\frac{z}{2}\right)I_n(z)\\
    +\frac{1}{2}\left(\frac{-z}{2}\right)^{n}\sum_{k=0}^\infty (\psi (k+1) + \psi(n+k+1))\frac{\left(\frac{z^2}{4}\right)^{k}}{k!(n+k)!},
\end{split}
\end{equation}
where $$I_n(z)=\sum_{m=0}^\infty \frac{\left(\frac{z}{2}\right)^{n+2m}}{m!(n+m)!},\; |\arg z |<\pi \text{ and } \psi(1)=-\gamma, \; \psi(m+1)=\sum_{k=1}^m\frac{1}{k}-\gamma,$$ being $\gamma=\lim_{j\to\infty}\left(\sum_{k=1}^j\frac{1}{k}-\ln(j)\right)$ Euler's constant. Note that $K_n(z)$ is singular at $z=0$ for every integer $n$.

In particular, 
\begin{equation}\label{def:besselK0}
    K_0(z)=-\log\left(\frac{z}{2}\right)I_0(z)+\sum_{m=0}^\infty \frac{\left(\frac{z}{2}\right)^{2m}}{(m!)^2}\psi(m+1), 
\end{equation}
so $K_0$ behaves like a logarithm at 0. For $n\geq 1$ one has the following asymptotic expansion \begin{equation}\label{propietat:besselKprop}
    K_n(z)\sim \frac{(n-1)!}{2\left(\frac{1}{2}z\right)^n} = 2^{n-1} (n-1)! z^{-n}, \quad \text{as}\quad z\to 0.
\end{equation}

For large $|z|$ we may use the following representation, which is equivalent to \eqref{def:besselK}:
\begin{equation}\label{def:besselKint}
    K_n(z)=\frac{\pi^\frac{1}{2}\left(\frac{1}{2}z\right)^n}{\Gamma\left(n+\frac{1}{2}\right)} \int_1^\infty e^{-zt}(t^2-1)^{n-\frac{1}{2}}dt,\quad n\geq 0,\; |\arg z | <\frac{1}{2}\pi,
\end{equation}
and the corresponding asymptotic expansion for large $|z|$ and fixed $n:$
\begin{equation}\label{propietat:besselKlluny}
    K_n(z)\sim \sqrt{\frac{\pi}{2z}} e^{-z} \left(1+\frac{4n^2-1}{8z}+\frac{(4n^2-1)(4n^2-9)}{2!(8z)^2}+\cdots\right),\quad |\arg z|<\frac{3}{2}\pi. 
\end{equation}

Moreover, for all $\alpha\in\R$ satisfying $\alpha > n$,  
\begin{equation}\label{eq:IntdefBessel}
    \int_0^\infty t^{\alpha-1} K_n(t)dt = 2^{\alpha-2}\Gamma \left(\frac{\alpha-n}{2}\right)\Gamma \left(\frac{\alpha+n}{2}\right),
\end{equation}
see \cite[p. 388]{watson1922treatise}. 

Finally, since derivatives of $K_n$ will be involved in our analysis, we recall the following recurrence relations:
\begin{align}
    K_n'(z)&=-K_{n-1}(z)-\frac{n}{z}K_n(z)=-K_{n+1}(z)+\frac{n}{z}K_n(z) \label{propietat:besselDerivades}\\
    \frac{2n}{z}K_n(z)&=K_{n+1}(z)-K_{n-1}(z) \label{propietat:besselRecurrencia}
\end{align}

\subsection{Inequalities for Hölder continuous functions and Kernel estimates}
We now recall the definition of Hölder spaces as well as some well-known properties of its elements.

\begin{definition} \label{def:holder}
Given $0<\gamma<1$ and $f:\R^n\to \R,$ set $$\Np{f}=\sup_{x\in \R^n} |f(x)| \text{ and } \SNa{f}=\sup_{\substack{x,y\in\R^n\\ x\neq y}} \frac{|f(x)-f(y)|}{|x-y|^\gamma}.$$
We define the norm $$\Na{f}\coloneqq \Np{f}+\SNa{f}.$$
For $$\begin{array}{rccl}
     F:&\R^n&\to& \R^d \\
     & x & \to & F(x)=(f_1(x),\dots,f_d(x)) 
\end{array}$$ we define $$\Na{F}\coloneqq \sup_{i=1,\dots, d} \Na{f_i},$$ and the Hölder space $$\Lipa (\R^n; \R^d) \coloneqq \left\{ f: \R^n\to \R^d:\Na{f}<\infty\right\}.$$
We also define $$\NUa{F}=\Np{F}+\Np{\nabla F} + \SNa{\nabla F},$$ and the Hölder space $\LipUa(\R^n;\R^d)$ as $$\LipUa(\R^n;\R^d)\coloneqq \left\{f: \R^n\to \R^d : \NUa{f} < \infty \right\}.$$
\end{definition}

\begin{lemma}\label{lemma:propietats}
Let $f,g\in\Lipa$ with $0<\gamma<1$. Then 
\begin{align}
    \SNa{fg}&\leq \Np{f}\SNa{g}+\SNa{f}\Np{g} \label{eq:SNprod} \\
    \Na{fg}&\leq \Na{f}\Na{g} \label{eq:algebra}
\end{align}
If moreover $X$ is a smooth invertible transformation in $\R^n$ satisfying 
$$|\det\nabla X(\alpha)|\geq c_1 > 0,$$
then there exists $c>0$ such that
\begin{align}
    \Na{(\nabla X)^{-1}}&\leq c\Na{\nabla X}^{2n-1} \label{eq:NaGradNabla}\\
    \NUa{X^{-1}}&\leq c \NUa{X}^{2n-1}\\
    \SNa{f\circ X}&\leq \SNa{f}\Np{\nabla X}^\gamma \label{eq:SNaComp}\\ 
    \Na{f\circ X}&\leq \Na{f}(1+\NUa{X}^\gamma )\\
    \Na{f\circ X^{-1}}&\leq \Na{f}(1+\NUa{X}^{\gamma(2n-1)}) \label{eq:NaCompInversa}
\end{align}
\end{lemma}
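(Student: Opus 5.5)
The plan is to prove each inequality directly from Definition~\ref{def:holder}, in the order stated, since each later estimate uses the earlier ones.

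\emph{Product estimates.} For \eqref{eq:SNprod}, write
\[
f(x)g(x)-f(y)g(y)=f(x)\bigl(g(x)-g(y)\bigr)+g(y)\bigl(f(x)-f(y)\bigr),
\]
divide by $|x-y|^\gamma$ and take suprema. Adding $\Np{fg}\le\Np{f}\Np{g}$ gives
\[
\Na{fg}\le \Np{f}\Np{g}+\Np{f}\SNa{g}+\SNa{f}\Np{g}\le\Na{f}\Na{g},
\]
which is \eqref{eq:algebra}.

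\emph{Inverse matrix.} For \eqref{eq:NaGradNabla}, use Cramer's rule, $(\nabla X)^{-1}=\operatorname{adj}(\nabla X)/\det\nabla X$. Each entry of the adjugate is a polynomial of degree $n-1$ in the entries of $\nabla X$, so \eqref{eq:algebra} bounds its $\Lipa$ norm by $c\Na{\nabla X}^{n-1}$. For $1/\det\nabla X$, the determinant is a polynomial of degree $n$, so $\Na{\det\nabla X}\le c\Na{\nabla X}^n$. The reciprocal is handled through
\[
\Bigl|\tfrac1{a(x)}-\tfrac1{a(y)}\Bigr|\le \frac{|a(x)-a(y)|}{c_1^2},
\]
which gives $\Na{1/\det\nabla X}\le c(1+\Na{\nabla X}^n)$. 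Multiplying yields the exponent $2n-1$, after absorbing lower-order terms. This absorption needs $\Na{\nabla X}$ bounded below, and that holds because $|\det\nabla X|\ge c_1$ forces $\Np{\nabla X}\gtrsim c_1^{1/n}$. The constant $c$ therefore depends on $c_1$ and $n$.

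\emph{Composition.} For \eqref{eq:SNaComp}, use the mean value inequality $|X(x)-X(y)|\le\Np{\nabla X}|x-y|$, so that
\[
|f(X(x))-f(X(y))|\le\SNa{f}\Np{\nabla X}^\gamma|x-y|^\gamma .
\]
Since $\Np{f\circ X}\le\Np{f}$, the full norm bound follows.

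\emph{Inverse map.} For $X^{-1}$, observe that $\nabla(X^{-1})=(\nabla X)^{-1}\circ X^{-1}$. The sup norm of $(\nabla X)^{-1}$ is bounded via the adjugate by $c\Np{\nabla X}^{n-1}$. Its Hölder seminorm after composing with $X^{-1}$ is, by \eqref{eq:SNaComp}, at most
\[
\SNa{(\nabla X)^{-1}}\,\Np{\nabla X^{-1}}^\gamma\lesssim \Na{\nabla X}^{2n-1}\Np{\nabla X}^{(n-1)\gamma}.
\]
This gives a bound of the form $c\NUa{X}^{N}$. Getting exactly the exponent $2n-1$ requires either careful bookkeeping or simply accepting that $c$ may depend on $c_1$.

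The $\Np{X^{-1}}$ term is the one genuine obstacle. It is not controlled by $X$ alone unless one interprets $X$ as a flow map of the form $x\mapsto x+\text{bounded}$, or restricts attention to the gradient part. I would note that in the applications $X$ is the flow, so $X-\mathrm{id}$ is bounded; alternatively, one reads the norm modulo this term. Finally, \eqref{eq:NaCompInversa} follows by applying \eqref{eq:SNaComp} with $X^{-1}$ in place of $X$ and using $\Np{\nabla X^{-1}}\le c\Np{\nabla X}^{n-1}\le c\NUa{X}^{2n-1}$.
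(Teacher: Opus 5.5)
\textbf{The gap is the exponent in $\NUa{X^{-1}}\le c\NUa{X}^{2n-1}$: it cannot be reached for $n\ge2$.} The rest of your argument is the standard elementary one: the product rule, Cramer's rule with $|1/a(x)-1/a(y)|\le |a(x)-a(y)|/c_1^2$, the mean value inequality, and $\nabla(X^{-1})=(\nabla X)^{-1}\circ X^{-1}$. The paper gives no proof of its own; it only cites Majda--Bertozzi.

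Your worry about $\Np{X^{-1}}$ is justified, but it is a defect of the definitions, not of the argument. With $\Np{X}$ inside $\NUa{X}$, every bijection of $\R^n$ has $\NUa{X}=\infty$, so every estimate involving $\NUa{X}$ is vacuous as written. One repair is to use $|X(0)|$ in place of $\Np{X}$; then $|X^{-1}(0)|\le \Np{\nabla X^{-1}}\,|X(0)|$. Another is to measure $X-\mathrm{id}$; then $X^{-1}-\mathrm{id}=-(X-\mathrm{id})\circ X^{-1}$ has the same sup norm. The constant you insert in \eqref{eq:NaCompInversa} is genuinely needed: $X=a\,\mathrm{id}$ with $a$ small violates the constant-free version.

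On the exponent itself, you reach $\NUa{X}^{2n-1+(n-1)\gamma}$ and suggest that careful bookkeeping, or letting $c$ depend on $c_1$, brings it down to $2n-1$. Neither can work: $c_1$ only affects the constant, and for $n\ge 2$ your exponent is sharp. Take $n=2$, $\lambda$ large, $\mu=c_1\lambda^{-2}$ and $\tau=\mu^{1/\gamma}$. Let $b\in C_c^\infty(\R)$ satisfy $0\le b\le 1$, $b(0)=0$, $b(1)=1$. Set $X(\alpha)=(\lambda s(\alpha_1),\lambda\alpha_2)$ with $s(0)=0$ and $s'(t)=\mu\bigl(1+b(t/\tau)\bigr)$.

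Then $X$ is a smooth bijection with $\det\nabla X=\lambda^2 s'\ge c_1$ and $X(0)=0$. Since $\SNa{s'}=\mu\tau^{-\gamma}\SNa{b}=\SNa{b}$, we also have $\Np{\nabla X}+\SNa{\nabla X}\le\lambda(1+\SNa{b})$.

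On the other hand, $\partial_1(X^{-1})_1(x)=1/\bigl(\lambda s'(s^{-1}(x_1/\lambda))\bigr)$. It equals $(\lambda\mu)^{-1}$ at $x=0$ and $(2\lambda\mu)^{-1}$ at $x=(\lambda s(\tau),0)$, and $\lambda s(\tau)\le 2\lambda\mu\tau$. Hence
\[
\SNa{\nabla(X^{-1})}\ \ge\ \frac{(2\lambda\mu)^{-1}}{(2\lambda\mu\tau)^{\gamma}}=\frac{1}{2^{1+\gamma}\lambda^{1+\gamma}\mu^{2+\gamma}}=\frac{\lambda^{3+\gamma}}{2^{1+\gamma}c_1^{2+\gamma}},
\]
which is not $O(\lambda^{3})$. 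So this inequality cannot be proved as stated, with $c=c(n,\gamma,c_1)$ and any norm that is finite on such $X$.

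What your argument does prove is the bound with exponent $2n-1+(n-1)\gamma$. If $\det\nabla X\equiv 1$, as for the volume-preserving flows of the paper, then $(\nabla X)^{-1}=\operatorname{adj}\nabla X$ has degree $n-1$. The same argument then gives exponent $(n-1)(1+\gamma)<2n-1$, and the stated bound holds. The paper only applies the lemma to $X\in O_M$, where $\NUa{X}<M$, so any polynomial bound suffices there. You should state this corrected version rather than leave the exponent open.
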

The proof of Lemma \ref{lemma:propietats} can be found in \cite[pp. 159 - 163]{MajdaBertozzi}.

\begin{definition}
    A function $f:\R^n\to \R$ is \emph{Log-Lipschitz} if there exists a constant $C>0$ such that $$|f(x)-f(y)|\leq C |x-y||\log{|x-y|}|.$$

    A function $f:\R^n\to \R^d$ is Log-Lipschitz if each of its components is.    
\end{definition}

We now state some properties of the kernel defined in \eqref{def:kernel} that will be used repeatedly throughout the proofs of the upcoming results.

\begin{lemma}\label{lemma:propietatsK}
    Let $K$ be as defined in \eqref{def:kernel}. Then, for all $x,y\in\R^2\setminus\{0\}$
    \begin{align}
        |K(x)|&\leq \frac{C}{|x|},\label{eq:DecK} \\
        |\nabla K(x)|&\leq \frac{C}{|x|^2},\label{eq:DecGradK} \\
        |K(x)-K(y)|&\leq \frac{C|x-y|}{|x||y|},\label{eq:difK}    
    \end{align}
    for some universal constant $C>0,$ that does not depend on $\varepsilon.$ 
\end{lemma}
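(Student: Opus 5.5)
Everything reduces to scaling plus one-variable bounds on the Bessel functions. Write $r=|x|$ and $s=\varepsilon r$. Then
\[
|K(x)|=\frac{\varepsilon}{2\pi}K_1(\varepsilon r)=\frac{1}{2\pi r}\,sK_1(s).
\]
So \eqref{eq:DecK} with an $\varepsilon$-independent constant is equivalent to $\sup_{s>0} sK_1(s)<\infty$. Near $0$ this follows from \eqref{propietat:besselKprop}, since $sK_1(s)\to 1$. For large $s$ it follows from \eqref{propietat:besselKlluny}, since $sK_1(s)\sim \sqrt{\pi s/2}\,e^{-s}\to 0$. Continuity on $(0,\infty)$ handles the compact middle range. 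In fact $sK_1(s)$ is decreasing: by \eqref{propietat:besselDerivades}, $(sK_1)'=-sK_0<0$, and $K_0>0$ by \eqref{def:besselKint}. This gives $0<sK_1(s)\le 1$ and the explicit constant $C=1/(2\pi)$.

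For \eqref{eq:DecGradK}, write $K(x)=x^\perp g(r)$ with $g(r)=\frac{\varepsilon}{2\pi r}K_1(\varepsilon r)$. Then
\[
\nabla K = g(r)\,\nabla(x^\perp) + x^\perp\otimes \frac{x}{r}\,g'(r).
\]
The first term is bounded by $|g(r)|\le C r^{-2}$ by the previous step. For the second term, use \eqref{propietat:besselDerivades} with $n=1$, $K_1'(z)=-K_0(z)-K_1(z)/z$, to compute
\[
g'(r)=-\frac{\varepsilon}{2\pi r^2}K_1(\varepsilon r)+\frac{\varepsilon^2}{2\pi r}K_1'(\varepsilon r) = -\frac{1}{2\pi r^3}\bigl(2sK_1(s)+s^2K_0(s)\bigr).
\]
Hence $r|g'(r)|\le C r^{-2}\sup_{s>0}\bigl(2sK_1(s)+s^2K_0(s)\bigr)$. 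The function $s^2K_0(s)$ is bounded: near $0$ it behaves like $-s^2\log s\to0$ by \eqref{def:besselK0}, and at infinity it decays exponentially by \eqref{propietat:besselKlluny}. This proves \eqref{eq:DecGradK} with a constant independent of $\varepsilon$.

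For \eqref{eq:difK}, split into two cases, assuming without loss of generality $|y|\le|x|$.

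\emph{Case $|x-y|\ge |y|/2$.} Then
\[
|K(x)-K(y)|\le C\Bigl(\frac1{|x|}+\frac1{|y|}\Bigr)\le \frac{2C}{|y|}\le \frac{4C|x-y|}{|x||y|}\cdot\frac{|x|}{2|y|}\cdot\ldots
\]
This is cleaner written directly. We have $\frac{1}{|y|}=\frac{|x|}{|x||y|}\le\frac{|x-y|+|y|}{|x||y|}\le \frac{3|x-y|}{|x||y|}$, so \eqref{eq:DecK} suffices.

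\emph{Case $|x-y|<|y|/2$.} The segment $[x,y]$ stays at distance at least $|y|/2$ from the origin, and also $|x|\le \tfrac32|y|$. The mean value theorem and \eqref{eq:DecGradK} give
\[
|K(x)-K(y)|\le \frac{4C|x-y|}{|y|^2}\le \frac{6C|x-y|}{|x||y|}.
\]

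The only step requiring genuine care is the uniform-in-$\varepsilon$ boundedness of the scale-invariant profiles $sK_1(s)$ and $s^2K_0(s)$ on all of $(0,\infty)$. This boundedness is exactly what the small- and large-argument asymptotics provide. The rest is the standard argument for kernels of homogeneity $-1$.
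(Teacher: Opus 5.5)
Your proof is correct, but it takes a somewhat different route from the paper's.

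\emph{The paper's route.} It first proves the general bound $|K_n(r)|\le C_n r^{-n}$ on $(0,\infty)$ for every $n\ge 1$. It uses the series expansion on $(0,1]$ and the integral representation \eqref{def:besselKint}, which gives $C_n e^{-r}$ for $r>1$. It then cites Zineb--Haroune for \eqref{eq:DecK} and \eqref{eq:difK}. For \eqref{eq:DecGradK} it works entry by entry. The recurrence $K_1'=-K_2+K_1/z$ rewrites, for example, the $(1,1)$ entry of $\nabla K$ as $\frac{\varepsilon^2}{2\pi}\frac{x_1x_2}{|x|^2}K_2(\varepsilon|x|)$. The factor $\varepsilon^2$ then cancels against $|K_2(\varepsilon|x|)|\le C\varepsilon^{-2}|x|^{-2}$.

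\emph{Your route.} You use the radial structure $K=x^\perp g(|x|)$ and the other recurrence $K_1'=-K_0-K_1/z$. This reduces everything to the boundedness of the two scale-invariant profiles $sK_1(s)$ and $s^2K_0(s)$. The identity $(sK_1)'=-sK_0<0$ even gives the explicit, sharp constant $1/(2\pi)$ in \eqref{eq:DecK}. You also derive \eqref{eq:difK} from \eqref{eq:DecK} and \eqref{eq:DecGradK} by the standard two-case argument, rather than citing it.

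\emph{What each buys.} Your version is shorter, coordinate-free and fully self-contained. The paper's general bound on $K_n$ pays off later: it is reused for higher derivatives, namely the $K_3$ terms in the bounds for $\nabla S^{(1)}$ and for $\partial_{x_k}\nabla K$. Its $K_2$ form of the entries is also what exposes the zero-mean-on-spheres part $S^{(1)}$ of Remark \ref{remark:1derivades}.

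One cosmetic point: the first display in your case $|x-y|\ge|y|/2$ trails off into $\cdot\ldots$ and should simply be deleted. The inequality $1/|y|\le 3|x-y|/(|x||y|)$ that follows it is what actually carries that case.
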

\begin{proof}
    We show first that Bessel functions, $K_n$ for $n\geq 1$, satisfy \begin{equation} \label{eq:controlBessel}
        |K_n(r)|\leq \frac{C}{|r|^n}, \quad {\forall r\in (0,\infty),}
    \end{equation}     
    which will be used throughout. To do so we proceed as in \cite[Lemma 2.2]{ZinebHaroune}, the proofs of \eqref{eq:DecK} and \eqref{eq:difK} can also be found there. 

    {For $r\in(0,1]$, the series representation \eqref{def:besselK} shows that the leading term of $K_n(r)$ is proportional to $r^{-n}$, with all remaining terms involving higher powers of $r$. Thus, $K_n(r)$ can be regarded as a perturbation of $r^{-n}$, and in particular $|r^n K_n(r)| \leq C$.}
    
    For $r>1$ we can use that by the integral representation shown in \eqref{def:besselKint}
   { \begin{align*}      |K_n(r)|
    &= \left|\frac{\pi^\frac{1}{2}\left(\frac{1}{2}r\right)^n}{\Gamma\left(n+\frac{1}{2}\right)} \int_1^\infty e^{-rt}(t^2-1)^{\frac{1}{2}} (t^2-1)^{n-1}dt\right| \\
    &\leq \left|\frac{\pi^\frac{1}{2}\left(\frac{1}{2}r\right)^n}{\Gamma\left(n+\frac{1}{2}\right)} \int_1^\infty t^{2n-2}e^{-rt}(t^2-1)^{\frac{1}{2}}dt\right| \leq C_n e^{-r}, \\
    \end{align*}}
    where in the last step we use that $$\int_1^\infty t^s e^{-rt}(t^2-1)^{\frac{1}{2}} dt,$$ has an exponential decay at infinity for any $s\in\mathbb{N}.$ The proof of this follows by induction and can be seen in \cite[Lemma 2.2]{ZinebHaroune}.   
    
    We are now ready to prove \eqref{eq:DecGradK}. To do so, we compute the derivatives of the kernel:
    \begin{multline}
        \partial_1 K(x) = \frac{\varepsilon}{2\pi}\left(\frac{x_1 x_2}{|x|^3}K_1(\varepsilon|x|)-\frac{\varepsilon x_1 x_2}{|x|^2}K_1'(\varepsilon |x|),\right.\\\left. \frac{1}{|x|}K_1(\varepsilon|x|)-\frac{x_1^2}{|x|^3}K_1(\varepsilon |x|)+\frac{\varepsilon x_1^2}{|x|^2}K_1'(\varepsilon |x|) \right) \label{eq:KernelPartial1}
    \end{multline}
    \begin{multline}
        \partial_2 K(x) = \frac{\varepsilon}{2\pi}\left(-\frac{1}{|x|}K_1(\varepsilon|x|)+\frac{x_2^2}{|x|^3}K_1(\varepsilon |x|)-\frac{\varepsilon x_2^2}{|x|^2}K_1'(\varepsilon |x|) ,\right.\\\left. -\frac{x_1 x_2}{|x|^3}K_1(\varepsilon|x|)+\frac{\varepsilon x_1 x_2}{|x|^2}K_1'(\varepsilon |x|) \right)\label{eq:KernelPartial2}
    \end{multline}

    We focus on proving the result for the first component of $\partial_1 K,$ for the other components one can proceed analogously.

    Notice that by the second equality of \eqref{propietat:besselDerivades},
   \begin{align*}
    &\frac{\varepsilon}{2\pi}
    \left(\frac{x_1 x_2}{|x|^3}K_1(\varepsilon|x|)
    -\frac{\varepsilon x_1 x_2}{|x|^2}K_1'(\varepsilon|x|)\right) \\[3pt]
    &= \frac{\varepsilon}{2\pi}
    \left[\frac{x_1 x_2}{|x|^3}K_1(\varepsilon|x|)
    -\frac{\varepsilon x_1 x_2}{|x|^2}
    \left(-K_2(\varepsilon|x|)+\frac{1}{\varepsilon |x|}K_1(\varepsilon|x|)\right)\right] = \frac{\varepsilon^2}{2\pi}\frac{x_1 x_2}{|x|^2}K_2(\varepsilon|x|).
\end{align*}

Therefore, using \eqref{eq:controlBessel} and the factor $\varepsilon^2$ in front, we deduce that 
$|\partial_1 K(x)| \leq \frac{C}{|x|^2}$ with a constant $C$ independent of $\varepsilon$.
\end{proof}

\begin{remark}\label{remark:1derivades}
    Notice that the first component of $\partial_1 K$ as written on \eqref{eq:KernelPartial1} has zero mean over spheres and by using \eqref{propietat:besselDerivades} the second component can be rewritten as $$\frac{x_2^2-x_1^2}{|x|^3}K_1(\varepsilon |x|)-\frac{\varepsilon x_1^2}{|x|^2}K_0(\varepsilon |x|),$$ where the first term also has mean-value zero on spheres and has modulus bounded by $\frac{C}{|x|^2}$ (by \eqref{eq:controlBessel}) and the second term is integrable by \eqref{eq:IntdefBessel}. The analogous happens for $\partial_2K$, so we can write {
    \begin{align*} 
    \nabla K&\coloneqq \SU+\SD\\&=\frac{1}{2\pi}\left[\begin{pmatrix}
        \frac{\varepsilon^2 x_1 x_2}{|x|^2} K_2(\varepsilon |x|) & \frac{\varepsilon(x_2^2-x_1^2)}{|x|^3}K_1(\varepsilon |x|)\\
         \frac{\varepsilon(x_2^2-x_1^2)}{|x|^3}K_1(\varepsilon |x|)& -\frac{\varepsilon^2 x_1 x_2}{|x|^2} K_2(\varepsilon |x|)
    \end{pmatrix}+\begin{pmatrix}
        0 & -\frac{\varepsilon^2 x_1^2}{|x|^2}K_0(\varepsilon |x|) \\
        \frac{\varepsilon^2 x_2^2}{|x|^2}K_0(\varepsilon |x|) & 0
    \end{pmatrix}\right]
    \end{align*}} where each component of $\SU$ has mean-value zero on spheres and $|\SU_{i,j}(x)|\leq \frac{C}{|x|^2}$, {while each component of $\SD$ belongs to $L^1(\mathbb{R}^2)$ by 
\eqref{eq:IntdefBessel}}
\end{remark}

\begin{lemma}\label{lemma:SIO}
    Let $K$ be the kernel defined in \eqref{def:kernel}. Let $P=\partial_iK, i=1,2.$ Set $$Tf(x)=\int_{\R^2} K(x-x')f(x')dx' \text{ and } Sf(x)= p.v. \int_{\R^2} P(x-x')f(x')dx,$$ where $p.v.$ stands for the Cauchy principal value. 

    For $0<\gamma < 1$ let $f\in \Lipa_c(\R^2;\R).$ Set $R^2\coloneqq m(supp(f))<\infty.$ Then, there exists a constant $c,$ independent of $\varepsilon$, $f$ and $R,$ such that 
    \begin{align}
        \Np{Tf}&\leq cR \Np{f}\label{eq:Ksna}\\
        \Np{Sf}&\leq c\left\{\SNa{f}\delta^\gamma +\max\left(1,\ln{\frac{R}{\delta}}\right)\Np{f}\right\}, \quad \forall \delta>0,\label{eq:SIOepsilon}\\
        \SNa{Sf}&\leq c \SNa{f}.    \label{eq:SIOsna}     
    \end{align}
\end{lemma}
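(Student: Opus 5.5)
We prove the three estimates separately, using only Lemma \ref{lemma:propietatsK} and the splitting $\nabla K=\SU+\SD$ of Remark \ref{remark:1derivades}. Throughout, $c$ denotes a constant independent of $\varepsilon$, $f$ and $R$.

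\textbf{The bound \eqref{eq:Ksna}.} By \eqref{eq:DecK},
\[
|Tf(x)|\le C\Np{f}\int_{\mathrm{supp} f}\frac{dx'}{|x-x'|}.
\]
Among all sets of measure $R^2$, the integral of $1/|x-x'|$ is largest for the disc centered at $x$ of the same measure, whose radius is $R/\sqrt{\pi}$. This gives $\int_{\mathrm{supp} f}|x-x'|^{-1}dx'\le 2\sqrt{\pi}R$, hence $\Np{Tf}\le cR\Np{f}$.

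\textbf{The bound \eqref{eq:SIOepsilon}.} Write $P=P^{(1)}+P^{(2)}$, where $P^{(1)},P^{(2)}$ are the relevant entries of $\SU,\SD$. Since $P^{(1)}$ has zero mean on circles, for any $\delta>0$
\[
Sf(x)=\int_{|x-x'|<\delta}P^{(1)}(x-x')\bigl(f(x')-f(x)\bigr)dx'+\int_{|x-x'|\ge\delta}P^{(1)}(x-x')f(x')dx'+\int P^{(2)}(x-x')f(x')dx'.
\]
\begin{itemize}
\item The first term is bounded by $C\SNa{f}\int_{|y|<\delta}|y|^{\gamma-2}dy=c\SNa{f}\delta^\gamma$.
\item For the second term, if $\delta\ge R$ the bound $|P^{(1)}|\le C|y|^{-2}$ and the rearrangement argument give at most $C\Np{f}\,R^2/\delta^2\le C\Np{f}$. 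If $\delta<R$, the rearrangement onto the annulus $\delta\le|y|\le R/\sqrt\pi$ gives $C\Np{f}\bigl(1+\ln(R/\delta)\bigr)$.
\item For the third term, $\Np{P^{(2)}}_{L^1}$ is bounded independently of $\varepsilon$. Indeed, after the change of variables $y\mapsto y/\varepsilon$ it equals $c\int_0^\infty tK_0(t)\,dt<\infty$ by \eqref{eq:IntdefBessel}. So this term is at most $c\Np{f}$.
\end{itemize}
The second and third contributions combine into $c\max\bigl(1,\ln\frac{R}{\delta}\bigr)\Np{f}$, which proves \eqref{eq:SIOepsilon}.

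\textbf{The bound \eqref{eq:SIOsna}.} This is the main obstacle. The $\SD$ part is straightforward: convolution with an $L^1$ kernel gives $\SNa{P^{(2)}*f}\le\Np{P^{(2)}}_{L^1}\SNa{f}\le c\SNa{f}$.

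For the $\SU$ part, fix $x,y$ and set $d=|x-y|$. Using the mean-zero property, write
\[
S_1f(x)=\mathrm{p.v.}\!\int P^{(1)}(x-z)\bigl(f(z)-f(x)\bigr)dz,
\]
and similarly at $y$. Split both integrals into the region $|z-x|<2d$ and its complement.
\begin{itemize}
\item On the near region, the size bound $|P^{(1)}|\le C|\cdot|^{-2}$ gives a contribution at most $c\SNa{f}d^\gamma$ for each of the two integrals (for the one at $y$, use $B(x,2d)\subset B(y,3d)$).
\item On the far region, write
\[
\bigl[P^{(1)}(x-z)-P^{(1)}(y-z)\bigr]\bigl(f(z)-f(x)\bigr)+P^{(1)}(y-z)\bigl(f(y)-f(x)\bigr).
\]
The first piece needs the smoothness estimate $|\nabla P^{(1)}(w)|\le C|w|^{-3}$ with $C$ independent of $\varepsilon$. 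This follows, as in the proof of \eqref{eq:DecGradK}, from \eqref{propietat:besselDerivades} and \eqref{eq:controlBessel}, since each derivative produces factors of the form $\varepsilon^k|w|^{j}K_n(\varepsilon|w|)$ with matching homogeneity. It then yields $\int_{|z-x|>2d}d\,|z-x|^{-3}\SNa{f}|z-x|^\gamma dz\le c\SNa{f}d^\gamma$.
\item The second piece requires a uniform bound on the truncated integral $\bigl|\int_{|z-x|\ge2d}P^{(1)}(y-z)\,dz\bigr|$. By the zero mean on circles centered at $y$, this reduces to the integral over the symmetric difference of $\{|z-x|\ge 2d\}$ and $\{|z-y|\ge 2d\}$. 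That region lies in an annulus $d\le|z-y|\le3d$, where $|P^{(1)}|\le C/d^2$, so the integral is bounded by a constant. Combined with $|f(y)-f(x)|\le\SNa{f}d^\gamma$, this piece is also at most $c\SNa{f}d^\gamma$.
\end{itemize}

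A technical point: the tail integrals converge because $f$ has compact support, while the kernel bounds are uniform in $\varepsilon$.
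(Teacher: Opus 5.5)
Your proof is correct and follows the paper's route. Like the paper, you split $P=\SU+\SD$ and run the Majda--Bertozzi (Lemma 4.6) argument on the mean-zero part using $|\SU|\le C|x|^{-2}$ and $|\nabla \SU|\le C|x|^{-3}$ uniformly in $\varepsilon$, and you handle $\SD$ through its $\varepsilon$-independent $L^1$ norm; you simply write out the Majda--Bertozzi details that the paper only cites.

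There are two small imprecisions, neither affecting the bounds. First, the convergence at infinity of $\int_{|z-x|\ge 2d}P^{(1)}(y-z)\,dz$ comes from the exponential decay of $K_1,K_2$ for fixed $\varepsilon>0$ (or from the zero mean on circles), not from the compact support of $f$. Second, the extremal annulus has outer radius $\sqrt{\delta^2+R^2/\pi}$ rather than $R/\sqrt{\pi}$.
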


\begin{proof}
    The proof of \eqref{eq:Ksna} is as in \cite[Lemma~4.5]{MajdaBertozzi}. To obtain \eqref{eq:SIOepsilon} and \eqref{eq:SIOsna}, we follow the strategy of \cite[Lemma~4.6]{MajdaBertozzi}, {which relies crucially on two properties of the kernel: having zero mean over spheres and exhibiting suitable gradient decay. The decomposition introduced in the previous remark isolates the part $\SU$, whose components have zero mean over spheres, while the remaining term $\SD$ is more regular. Thus, in order to apply the Majda--Bertozzi argument to $\SU$, it only remains to verify that $|\nabla \SU(x)| \le \frac{C}{|x|^{3}}.$}

 To do so, note that each component can be written as {$\SU_{i,j}=\varepsilon^m H^{(m-2)}(x)K_{m}(\varepsilon |x|)$} for $m=1,2$ where $H^{(m-2)}$ is a homogeneous function of degree $m-2$ such that $\nabla H^{(m-2)}$ is homogeneous of degree $m-3$. Therefore, when taking derivatives one gets {
    \begin{align*}
       |\partial_{x_k} \SU |&=\varepsilon^m \left| \partial_{x_k} H^{(m-2)}(x) K_m(\varepsilon|x|)+H^{(m-2)}(x)\frac{\varepsilon x_k}{|x|}K_m'(\varepsilon |x|)\right|\\&= \varepsilon^m\left| \partial_{x_k} H^{(m-2)}(x) K_m(\varepsilon|x|)+H^{(m-2)}(x)\frac{\varepsilon x_k}{|x|}\left(-K_{m+1}(\varepsilon|x|)+\frac{m}{\varepsilon|x|}K_m(\varepsilon|x|)\right)\right|\\&\leq \frac{C}{|x|^3},
    \end{align*}}
    
    where in the first equality we use the second part of \eqref{propietat:besselDerivades} and in the last step we use the homogeneity of $H$ and \eqref{eq:controlBessel}. {In particular, the bound \eqref{eq:controlBessel} implies that $C$ does not depend on $\varepsilon.$} 
    
    For $\SD$,{both \eqref{eq:SIOepsilon} and \eqref{eq:SIOsna} follow directly}. However, we show the proofs here for one of its components to stress the $\varepsilon$ independence of the constants:
    \begin{align*}
        |{\SD_{1,2}} f(x)| &= \left|\frac{\varepsilon}{2\pi}\int_{\R^2}\varepsilon\frac{(x_1-x_1')^2}{|x-x'|^2}K_0(\varepsilon |x-x'|) f(x') dx'\right|  
        \leq \varepsilon \Np{f} \int_0^\infty |\varepsilon rK_0(\varepsilon r) | dr\\ 
        &=\Np{f} \left|\int_0^\infty r'K_0(r')  dr'\right|\leq c \Np{f},
    \end{align*}
    where we have used \eqref{eq:IntdefBessel} and the fact that $K_0$ is of constant sign for $r>0.$ This shows \eqref{eq:SIOepsilon}. Finally, \eqref{eq:SIOsna} is proven using the same approach
    \begin{align*}
        |\SD_{1,2} f(x+h)- \SD_{1,2} f(x)| &= \left|\frac{\varepsilon}{2\pi}\int_{\R^2}\varepsilon\frac{y_1^2}{|y|^2}K_0(\varepsilon|y|)[f(x+h-y)-f(x-y)] dy \right| \\
        &\leq \SNa{f} |h|^\gamma \varepsilon \int_{0}^\infty |\varepsilon r K_0(\varepsilon r)| dr \leq c \SNa{f} {|h|^\gamma}. 
    \end{align*}
    
\end{proof}

\section{Existence and uniqueness of regular solutions}\label{sec:regular}

Our first goal is to establish the following well-posedness result for \eqref{def:QGSW} in the space of Hölder continuous functions.
\begin{theorem}\label{th:EUholder}
    For $0<\gamma<1,$ if $q_0\in \Lipa_c(\R^2;\R),$ then the \eqref{def:QGSW} has a unique solution $q(\cdot,t)\in \Lipa_c(\R^2;\R)$ for all time $t\in\R$. 
\end{theorem}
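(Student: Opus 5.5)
We follow the Lagrangian, particle-trajectory strategy of Majda--Bertozzi for the 2D Euler equations. The estimates of Lemma \ref{lemma:SIO} play the role of the Euler kernel bounds. The particle map $X(\alpha,t)$ satisfies
\[
\frac{dX}{dt}(\alpha,t)=F(X)(\alpha,t):=\int_{\R^2}K(X(\alpha,t)-X(\alpha',t))\,q_0(\alpha')\,d\alpha',\qquad X(\alpha,0)=\alpha .
\]
Here we use that $q(X(\alpha,t),t)=q_0(\alpha)$ and that $\det\nabla X=1$, since $v$ is divergence free. We regard this as an ODE on the open set
\[
\mathcal{O}_M=\{X:\ X-\mathrm{id}\in \LipUa,\ \inf_\alpha\det\nabla X>1/M,\ \NUa{X-\mathrm{id}}<M\}
\]
of the Banach space $\LipUa(\R^2;\R^2)$.

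\textbf{Step 1: local existence via Picard.} The first step is to show that $F:\mathcal{O}_M\to\LipUa$ is bounded and locally Lipschitz. Changing variables $x=X(\alpha')$ gives $F(X)=(K*q)\circ X$ with $q=q_0\circ X^{-1}$. By the chain rule, $\nabla_\alpha F(X)=\big[(\nabla K*q)\circ X\big]\nabla X$, where $\nabla K* q=Sq$ with $S$ as in Lemma \ref{lemma:SIO}. The pieces are bounded as follows:
\begin{itemize}
\item $\Np{F(X)}$ is controlled by \eqref{eq:Ksna};
\item $\Np{Sq}$ and $\SNa{Sq}$ are controlled by \eqref{eq:SIOepsilon} and \eqref{eq:SIOsna};
\item the composition with $X$ and $X^{-1}$ is controlled by Lemma \ref{lemma:propietats}.
\end{itemize}
This gives $\NUa{F(X)}\le C(M)$. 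For the Lipschitz bound in $X$ we write $F(X_1)-F(X_2)$ as
\[
\int \big[K(X_1(\alpha)-X_1(\alpha'))-K(X_2(\alpha)-X_2(\alpha'))\big]q_0(\alpha')\,d\alpha'.
\]
We treat it, and its $\alpha$-gradient, exactly as in \cite[Prop.~4.2]{MajdaBertozzi}, by interpolating $X_\theta=\theta X_1+(1-\theta)X_2$ and differentiating in $\theta$. This produces operators with kernels $\nabla K$ and $\nabla^2K$ in Lagrangian coordinates. The $\nabla K$ terms are handled with the splitting $\nabla K=\SU+\SD$ of Remark \ref{remark:1derivades}: $\SU$ is a zero-mean Calderón--Zygmund kernel satisfying $|\nabla\SU|\le C|x|^{-3}$, and $\SD$ is in $L^1$. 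All the constants are $\varepsilon$-independent, by Lemma \ref{lemma:propietatsK}. Picard's theorem then gives a unique local solution $X\in C^1([0,T);\mathcal{O}_M)$. Setting $q=q_0\circ X^{-1}(\cdot,t)$ yields a solution in $\Lipa_c$ whose support is transported by a Lipschitz flow, so it stays compact. Uniqueness within the class follows from the same Lipschitz estimate, since any $\Lipa_c$ solution generates such a flow.

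\textbf{Step 2: global continuation.} By the continuation property of ODEs on Banach spaces, it suffices to show that $\NUa{X(\cdot,t)}$ cannot blow up in finite time. The determinant is automatically $1$. The supremum norm $\Np{q}=\Np{q_0}$ is conserved. The support area $m(\mathrm{supp}\, q(t))=R^2$ is conserved by incompressibility, and $\Np{v}\le cR\Np{q_0}$ by \eqref{eq:Ksna}, so the supports remain in a bounded set. The key estimate is the logarithmic one obtained from \eqref{eq:SIOepsilon} with $\delta$ chosen so that $\delta^\gamma\SNa{q}=\Np{q_0}$:
\[
\Np{\nabla v(t)}\le c\Big(1+\ln^+\frac{\SNa{q(t)}}{\Np{q_0}}\Big)\Np{q_0}.
\]
By \eqref{eq:SNaComp}, $\SNa{q(t)}\le\SNa{q_0}\Np{\nabla X^{-1}(t)}^\gamma$. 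Grönwall applied to $\frac{d}{dt}\nabla X=\nabla v(X)\nabla X$, and likewise to the inverse flow, gives
\[
\frac{d}{dt}\ln\Np{\nabla X^{\pm1}}\le C\big(1+\ln\Np{\nabla X^{\pm1}}\big),
\]
so $\Np{\nabla X}$ is at most double-exponential in $t$. Next, $\SNa{\nabla v}\le c\SNa{q}$ by \eqref{eq:SIOsna}. Feeding this into
\[
\frac{d}{dt}\SNa{\nabla X}\le \SNa{\nabla v}\Np{\nabla X}^{1+\gamma}+\Np{\nabla v}\SNa{\nabla X}
\]
gives a linear differential inequality, hence a finite bound on every finite interval. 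This yields global existence, and time reversibility covers $t<0$.

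\textbf{Main obstacle.} The hardest part will be the Lipschitz estimate of $F$ in $\LipUa$ in Step 1. It involves the second derivatives of the non-homogeneous kernel, $\nabla\SU$ and $\nabla\SD$, and the difference of singular integrals on deformed domains. I would first try to reduce it entirely to the homogeneous-style argument, by checking two things: that $\nabla^2 K$ splits again into a zero-mean part bounded by $C|x|^{-3}$ plus a part bounded by $C|x|^{-2}$ times a Bessel factor; and that the commutator estimates of \cite{MajdaBertozzi} only use the size, smoothness and cancellation of the kernel, so that they can be checked directly using \eqref{propietat:besselDerivades} and \eqref{eq:controlBessel}.
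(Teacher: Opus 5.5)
Your proposal is correct and follows essentially the same route as the paper. The paper also uses the particle-trajectory ODE solved by Picard--Lindelöf on an open subset of $\LipUa$. Its Lipschitz bound is likewise reduced, via the Gateaux derivative (equivalent to your $\theta$-interpolation), to operators with kernels $\nabla K$ and $x_i\partial_k\nabla K$, which split into a zero-mean Calderón--Zygmund part plus an integrable Bessel remainder; this is the content of Lemma~\ref{lemma:G(x)Y}. Continuation then goes through the logarithmic estimate from \eqref{eq:SIOepsilon} and Grönwall, which you actually spell out in more detail than the paper's sketch. There are two harmless slips to correct: because of the Dirac mass in \eqref{eq:derDistK} one has $\nabla(K*q)=Sq+c\,q$ rather than $Sq$, and for $X\in\mathcal{O}_M$ with $\det\nabla X\neq 1$ the change of variables gives $F(X)=\bigl(K*[(q_0\circ X^{-1})\det\nabla X^{-1}]\bigr)\circ X$, where the extra Jacobian factor lies in $\Lipa$ with norm controlled by $M$.
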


In order to prove theorem \ref{th:EUholder}, we follow the approach of \cite[Chapter 4]{MajdaBertozzi} for the Euler equation. That is, we prove existence and uniqueness by means of the particle–trajectory method. {We first establish existence and uniqueness for the 
flow map, and then deduce the corresponding results for \eqref{def:QGSW}.}

Let us first recall some definitions. Given a velocity $v$ and a point $\alpha\in\R^2$ we set, whenever it is well defined, the \emph{flow map} \begin{equation*}
\begin{aligned}
     X(\alpha,\cdot):\R&\to \R^2,\\ t&\to X(\alpha,t)
\end{aligned}
\end{equation*}
as the solution of the ordinary differential equation (ODE)
\begin{equation}
    \label{eq:ParticleTrajectories}
    \left\{\begin{array}{l}
          \frac{d}{dt} X(\alpha,t)=v(X(\alpha,t),t), \\
         X(\alpha,0)=\alpha.
    \end{array}\right.
\end{equation}
In integral form, the flow map is given by $$X(\alpha,t)=\alpha+\int_0^t v(X(\alpha,{s}),s)ds.$$

This map gives the position at time $t$ of the particle initially located at $\alpha$, moving under the velocity field $v$. It is also called the \emph{trajectory} of the particle initially at $\alpha$. If $q(\cdot, t)$ is smooth enough, then it is easy to check that $q(x,t)=q_0(X^{-1}(x,t))$ where, as usual, $X^{-1}(\cdot,t)$ denotes the inverse of the flow $X$. Hence, by \eqref{eq:NaCompInversa}, we have $q(\cdot,t)\in\Lipa$ whenever $X(\cdot,t)\in\LipUa$.

We focus on proving existence, uniqueness and regularity for $X$. To do so, we combine the fact that $X$ satisfies \eqref{eq:ParticleTrajectories} with $v(\cdot,t)=K*q(\cdot,t)$ at any given time to get $$\frac{dX}{dt}(\alpha,t)=\int_{\R^2} K(X(\alpha,t)-x')q(x',t)dx'.$$ 

Applying the change of variables $x'=X(\alpha',t)$, using that $v$ is divergence–free (so the Jacobian determinant equals one) and that $q$ is conserved along the flow, we obtain
\begin{align}
    \frac{dX}{dt}(\alpha,t)&=\int_{\R^2} K(X(\alpha,t)-X(\alpha',t))q_0(\alpha')d\alpha'\eqqcolon F(X(\alpha,t)),\label{eq:ODEF}\\
    X(\alpha,t)|_{t=0}&=\alpha.\notag
\end{align}
As a result, we have an ODE for $X$. 

\subsection{Local-in-time existence of solutions}
A standard approach to proving local existence and uniqueness for an ODE is to apply Picard-Lindelöf’s theorem. 
\begin{theorem}[Picard-Lindelöf]\label{th:Picard}
    Let $O\subseteq B$ be an open subset of a Banach space $B$ and let $F:O\to B$ be a locally Lipschitz continuous map. Then, given $X_0\in O,$ there exists a time $T>0$ such that the ordinary differential equation $$\frac{d X}{d t}=F(X), \quad X(\cdot,t=0)=X_0,$$ has a unique (local) solution $X\in C^1((-T,T); O).$
\end{theorem}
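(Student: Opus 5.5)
The statement is the classical Picard–Lindelöf theorem in a Banach space, quoted here as a tool. I will prove it by the contraction mapping principle applied to the integral formulation of the ODE.

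\emph{Setup.} Since $F$ is locally Lipschitz and $O$ is open, I choose $r>0$ such that the closed ball $\overline{B}(X_0,r)\subset O$ and $F$ is Lipschitz on it with constant $L$. Then $F$ is bounded there by $M\coloneqq \|F(X_0)\|_B+Lr$. I fix $T>0$ with $TM\le r$ and $TL\le \tfrac12$.

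\emph{Contraction.} Let $\mathcal{X}$ be the space of continuous maps $X:[-T,T]\to \overline{B}(X_0,r)$ with the sup norm; it is a complete metric space because $B$ is Banach. Define
\[
\Phi(X)(t)=X_0+\int_0^t F(X(s))\,ds .
\]
The Bochner (equivalently Riemann) integral is defined, since $s\mapsto F(X(s))$ is continuous into $B$. The bound $\|\Phi(X)(t)-X_0\|\le |t|M\le r$ shows that $\Phi$ maps $\mathcal{X}$ into itself. The estimate $\|\Phi(X)-\Phi(Y)\|_\infty\le TL\|X-Y\|_\infty\le \tfrac12\|X-Y\|_\infty$ shows that $\Phi$ is a contraction. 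Banach's fixed point theorem then gives a unique fixed point $X\in\mathcal{X}$. By the fundamental theorem of calculus for continuous $B$-valued integrands, $X$ is $C^1$ on $(-T,T)$, satisfies $X'=F(X)$, and has $X(0)=X_0$.

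\emph{Uniqueness.} This is the only point needing care. A second solution $Y\in C^1((-T,T);O)$ is not assumed to stay in the ball, so it need not lie in $\mathcal{X}$. I handle this by continuity. Let $t^*=\sup\{t<T: Y([0,t])\subset \overline{B}(X_0,r)\}$. On $[0,t^*)$ the map $Y$ satisfies the integral equation, so $\|Y(t)-X_0\|\le |t|M<r$ for $t<T$. Hence $Y$ cannot reach the boundary before time $T$, so $t^*=T$, and similarly for negative times. Thus $Y$ stays in the ball. Gronwall's inequality applied to $\|X(t)-Y(t)\|\le L\left|\int_0^t\|X-Y\|\,ds\right|$ then yields $X=Y$.
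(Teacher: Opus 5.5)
Your proof is correct. It is the standard contraction-mapping argument for the integral equation. You also handle properly the point that is often glossed over: a competing solution $Y\in C^1((-T,T);O)$ is not a priori in $\mathcal{X}$. It must first be shown to remain in $\overline{B}(X_0,r)$, via $\|Y(t)-X_0\|\le |t|M<r$ together with a continuity argument at $t^*$, before the Lipschitz bound and Gronwall can be applied. That continuity step is compressed but routine: if $t^*<T$, then $\|Y(t^*)-X_0\|\le t^*M<r$, so $Y$ stays in the ball slightly past $t^*$, a contradiction.

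The paper itself gives no proof of this statement. It quotes Picard--Lindel\"of as a classical black box, in the spirit of Majda--Bertozzi, and only verifies its hypotheses for the particle-trajectory map and for the contour dynamics operator. So there is no competing route to compare.

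What your explicit argument adds is the quantitative choice $T=\min\{r/M,\,1/(2L)\}$ with $M=\|F(X_0)\|_B+Lr$. This makes clear that the existence time depends only on the radius of the ball, the local Lipschitz constant, and $\|F(X_0)\|_B$. The paper relies on exactly this tacitly in Section~\ref{sec:regular}, where it asserts a time $T(M)$. It relies on it again in Section~\ref{sec:limit}, where it infers an $\varepsilon$-independent time $T^{\text{QGSW}}$ from $\varepsilon$-uniform bounds on $F^\varepsilon$ and its Lipschitz constant on $U_\delta$.
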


{We will apply the above theorem to the operator $F$ defined in \eqref{eq:ODEF}, 
for $B=\LipUa(\R^2; \R^2)$, and, as in the Euler setting, consider
\begin{equation}\label{def:obert}
    O_M = \LipUa(\R^2; \R^2)\cap 
    \left\{ \inf_{\alpha \in \R^2} \det \nabla_\alpha X(\alpha)>\frac12 
    \text{ and } \NUa{X}<M \right\}.
\end{equation}
For any $M>0$, this set is non-empty, open, and consists of one-to-one mappings of $\R^2$ onto $\R^2$. Therefore, it is a valid domain on which one can apply Picard–Lindelöf's theorem.}

We now need to verify that the hypotheses of the theorem hold, that is that $F:O_M\to B$ defined by  $$F(X(\alpha,t))=\int_{\R^2} K(X(\alpha,t)-X(\alpha',t))q_0(\alpha')d\alpha'$$ is bounded and locally Lipschitz on $O_M$. To do so, we first look at the derivatives of the kernel in the distributional sense.

%
%

{We consider each component of the kernel} (denoted with a superscript to avoid confusion with the Bessel functions $K_n$), and using Green’s first identity, we obtain for all $\varphi\in C^\infty_0$ 
\begin{align*}
    \langle \partial_{x_i} K^j, \varphi \rangle &= -\langle  K^j, \partial_{x_i} \varphi \rangle= {-}\lim_{\delta \to 0 }   \int_{|x|\geq \delta} K^j\partial_{x_i} \varphi dx \\ &=\lim_{\delta \to 0 }  \left(\int_{|x|\geq \delta} \partial_{x_i} K^j \varphi dx - \int_{|x|= \delta} K^j \varphi \frac{x_i}{|x|}ds\right)
\end{align*}

By changing variables $x \to \delta x$ on the second integral we get 
\begin{align*}
  \lim_{\delta \to 0 } \int_{|x|= \delta} K^j \varphi \frac{x_i}{|x|}ds&= \lim_{\delta \to 0 } \int_{|x|= 1} K^j(\delta x)\varphi(\delta x) x_i \delta\; ds\\ &
  = \lim_{\delta \to 0 } \int_{|x|= 1} {\frac{\varepsilon}{2\pi}} \frac{{(x^{\perp})_j}}{|x|} K_1(\varepsilon\delta |x|) \varphi(\delta x) x_i \delta\; ds\\
  &=\lim_{\delta \to 0 }  \frac{\varepsilon}{2\pi} \delta K_1(\varepsilon\delta) \int_{|x|= 1}  (x^{\perp})_j \varphi(\delta x) x_i \; ds\\
  &= {\frac{1}{2\pi}}\varphi(0) \int_{|x|=1} (x^{\perp})_j x_i ds 
\end{align*}
where we made use of the dominated convergence theorem and property \eqref{propietat:besselKprop}. 

Therefore, in the sense of distributions $$\partial_{x_i}K^j = p.v. \int_{\R^2} \partial_{x_i}K^j+c_{i,j}\delta_0,\quad c_{i,j}= {-\frac{1}{2\pi}}\int_{|x|= 1} (x^{\perp})_j x_i ds,$$  where $\delta_0$ denotes the Dirac delta at 0,  as usual. For convenience, we adopt the following more compact notation
\begin{equation}\label{eq:derDistK}
    \nabla K = p.v.\int_{\R^2} \nabla K + c \delta_0.
\end{equation}

{To show that $F$ is bounded from $O_M$ to $B$, we proceed in the same spirit as in the Euler setting. 
The estimates required for this step are simpler than those needed later to prove that $F$ is locally 
Lipschitz, and in fact will appear there in a slightly stronger form. For this reason, we do not 
present a separate verification of the boundedness here and simply note that it follows from the 
same type of arguments developed below.
}

Next, to prove that $F$ is locally Lipschitz on $O_M$ we will show a sufficient condition. If the derivative $F'(X)$ is bounded as a linear operator from $O_M$ to $B,$ then the mean-value theorem implies that 
\begin{align*}
    \NUa{F(X_1)-F(X_2)}&=\NUa{\int_0^1\frac{d}{d\delta}F[X_1+\delta(X_2-X_1)]d\delta}\\&\leq \int_0^1\NUa{F'[X_1+\delta(X_2-X_1)]}{d\delta}\NUa{X_1-X_2}
\end{align*}
so $F$ is locally Lipschitz.

Let $X\in O_M$ and $Y\in B$. Then, 
\begin{align*}
    F'(X)Y&=\frac{d}{d\delta}F(X+\delta Y)|_{\delta=0}\\&= \left.\frac{d}{d\delta}\int_{\R^2}K[X(\alpha,t)-X(\alpha',t)+\delta(Y(\alpha,t)-Y(\alpha',t))]q_0(\alpha')d\alpha'\right|_{\delta=0}\\ &=\int_{\R^2} \nabla K(X(\alpha,t)-X(\alpha',t)) (Y(\alpha,t)-Y(\alpha',t))q_0(\alpha')d\alpha',
\end{align*}
where in the last equality the principal value is not needed, since the singularity of $\nabla K$ when $\alpha=\alpha'$ is compensated with the term $Y(\alpha,t)-Y(\alpha',t).$

Thus, if we can prove the following lemma, we have that $F$ satisfies all the hypotheses of Picard-Lindelöf's theorem.
\begin{lemma}\label{lemma:G(x)Y}
    Let $q_0\in\Lipa_c(\R^2;\R)$ for some $0<\gamma<1.$ Given $X\in O_M$ and $Y\in B$, define $$F'(X)Y=\int_{\R^2} \nabla K(X(\alpha,t)-X(\alpha',t)) (Y(\alpha,t)-Y(\alpha',t))q_0(\alpha')d\alpha'.$$ Then there exists $c>0,$ independent of {$\varepsilon,$} $X$ and $Y,$ such that $$\NUa{F'(X)Y}\leq c \Na{q_0}\NUa{Y}.$$  
\end{lemma}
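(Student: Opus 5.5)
We follow the scheme of the corresponding Euler estimate in \cite[Chapter 4]{MajdaBertozzi}, using the decomposition $\nabla K=\SU+\SD$ of Remark \ref{remark:1derivades} and the $\varepsilon$-independent bounds of Lemmas \ref{lemma:propietatsK} and \ref{lemma:SIO}. The strategy is to pass to Eulerian variables and rewrite $F'(X)Y$ as combinations of the operators $T$ and $S$ from Lemma \ref{lemma:SIO}, applied to Hölder functions composed with $X^{-1}$.

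First we change variables $x'=X(\alpha',t)$; the Jacobian is bounded above and below on $O_M$. This gives
\[
F'(X)Y(\alpha)=\Big(\int_{\R^2}\nabla K(x-x')\,(\tilde q\,\tilde J)(x')\,dx'\Big)\tilde Y(x)-\int_{\R^2}\nabla K(x-x')(\tilde Y\tilde q\tilde J)(x')\,dx',
\]
with $x=X(\alpha)$, $\tilde q=q_0\circ X^{-1}$, $\tilde Y=Y\circ X^{-1}$ and $\tilde J=|\det\nabla X^{-1}|$. Each integral is individually a principal value. By \eqref{eq:derDistK}, the delta contributions $c\,\tilde q\tilde J\tilde Y$ cancel, so the difference is well defined.

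The $L^\infty$ bound then comes from writing the kernel difference via the mean value theorem. We use $|\nabla K(z)|\le C|z|^{-2}$ from \eqref{eq:DecGradK} together with $|Y(\alpha)-Y(\alpha')|\le \Np{\nabla Y}|\alpha-\alpha'|$ and $|X(\alpha)-X(\alpha')|\ge c_M|\alpha-\alpha'|$. This yields an integrable singularity $|\alpha-\alpha'|^{-1}$ on the bounded support of $q_0$, so $\Np{F'(X)Y}\le c\Np{q_0}\Np{\nabla Y}$.

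For the gradient we differentiate in $\alpha$. The derivative falling on $Y(\alpha)$ gives $\big(S(\tilde q\tilde J)\circ X\big)\nabla Y(\alpha)$. The derivative falling on $\nabla K$ gives, after integrating by parts in the Eulerian variable, a commutator of the form $[S,\tilde Y]$ applied to $\nabla\tilde q$-free terms. Following Majda--Bertozzi, the cleanest route is to write
\[
\nabla_\alpha F'(X)Y=\big(S(\tilde q\tilde J)\circ X\big)\nabla Y-\big(S(\tilde q\tilde J\,\nabla\tilde Y)\circ X\big)\nabla X+\text{(lower order)}.
\]
This follows from the identity $\nabla_\alpha[K(X(\alpha)-X(\alpha'))]\cdot(\ldots)$ combined with the chain rule. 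Each term is then a product of $S$ applied to a compactly supported $\Lipa$ function with a $\Lipa$ factor $\nabla Y$ or $\nabla X$. We estimate $\Na{S f}$ by \eqref{eq:SIOepsilon} with $\delta=1$ and by \eqref{eq:SIOsna}, both with constants independent of $\varepsilon$. Then \eqref{eq:algebra}, \eqref{eq:NaCompInversa} and \eqref{eq:NaGradNabla} control $\Na{\tilde q\tilde J\nabla\tilde Y}\le c(M)\Na{q_0}\NUa{Y}$. The support radius $R$ is controlled by $M$ and by the support of $q_0$. Composition with $X$ is handled by \eqref{eq:SNaComp}. The resulting constant depends on $M$ and on the support of $q_0$, but not on $\varepsilon$, $X$ or $Y$.

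The main obstacle is the rigorous justification of the derivative formula. Differentiating under a principal value with the kernel singularity at $\alpha=\alpha'$ requires care: one cuts out $|X(\alpha)-X(\alpha')|<\rho$ and checks that the boundary terms vanish as $\rho\to0$, or are absorbed by the delta term in \eqref{eq:derDistK}. Here the non-homogeneity matters. For the $\SU$ part we use mean zero on spheres and $|\nabla\SU|\le C|x|^{-3}$, exactly as in the homogeneous case. The $\SD$ part is in $L^1$ uniformly in $\varepsilon$, as shown in Lemma \ref{lemma:SIO}, so it needs no cancellation and the boundary terms it produces vanish trivially.
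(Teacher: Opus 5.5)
Your $L^\infty$ bound and your treatment of the term where the derivative falls on $Y(\alpha)$ are fine; they are the paper's $L^\infty$ estimate and its term $II$. The gap is the other half of the gradient. With $f=\tilde q\tilde J$ and $H=F'(X)Y\circ X^{-1}$, the term in which $\partial_{x_k}$ hits the kernel,
\[
p.v.\int_{\R^2}\partial_{x_k}\nabla K(x-x')\bigl(\tilde Y(x)-\tilde Y(x')\bigr)f(x')\,dx',
\]
is a Calder\'on-commutator-type operator and is the real content of the lemma. Your proposal never estimates it. Your splitting $H=\tilde Y\,Sf-S(\tilde Yf)$ is correct, but it only gives $H\in\Lipa$: differentiating it termwise would need $\nabla Sf$, which does not exist for $f\in\Lipa$.

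Your displayed formula replaces this term by $-S(f\nabla\tilde Y)$ plus unspecified ``lower order'' terms, using an integration by parts in $x'$. That integration by parts moves the derivative onto $(\tilde Y(x)-\tilde Y(x'))f(x')$. It therefore produces $\int\nabla K(x-x')(\tilde Y(x)-\tilde Y(x'))\,\partial_k f(x')\,dx'$, which is meaningless for $f\in\Lipa$, so nothing is ``$\nabla\tilde q$-free''.

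Nor is the remainder of lower order. Take $\tilde Y(x)=x$ on a ball $B$ containing $\operatorname{supp} f$. For $x\in B$ your two displayed terms cancel identically. On the other hand, $\nabla K(z)z=-K(z)-\frac{\varepsilon^2}{2\pi}z^\perp K_0(\varepsilon|z|)$, so on $B$, $H=-K*f$ plus a term whose gradient has an $L^1$ kernel. Hence $\nabla H$ contains the full singular integral $-p.v.\,\nabla K*f$, and all of it sits in your unspecified remainder.

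What is missing is the commutator estimate that the paper carries out. Expand
\[
\tilde Y(x)-\tilde Y(x')=\nabla\tilde Y(x')(x-x')+\mathcal R(x,x'),\qquad |\mathcal R(x,x')|\le c\NUa{\tilde Y}|x-x'|^{1+\gamma}.
\]
The linear part gives operators with kernels $(x_i-x_i')\partial_{x_k}\nabla K(x-x')$ acting on the compactly supported $\Lipa$ functions $\partial_i\tilde Y\,f$. Using \eqref{propietat:besselDerivades}, you must check that each such kernel splits into two pieces, with $C$ independent of $\varepsilon$:
\begin{itemize}
\item a part with zero mean on spheres, bounded by $C|x|^{-2}$, with gradient bounded by $C|x|^{-3}$;
\item an $L^1$ part.
\end{itemize}
Only then does Lemma \ref{lemma:SIO} apply, and this is where the non-homogeneity of $K$ genuinely has to be handled.

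The remainder term has a kernel $G$ with $|G|\lesssim|x-x'|^{-2+\gamma}$ and $|\nabla_xG|\lesssim|x-x'|^{-3+\gamma}$. It is estimated in $\Lipa$ by splitting at the ball $B(x_1,3|x_1-x_2|)$.

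By contrast, the delta contribution from the distributional derivative, which you single out as the main obstacle, is routine. It is just $-c\,\nabla\tilde Y\,f$, a product of $\Lipa$ functions.
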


Since the proof of the lemma is rather technical, we postpone it to the end of this subsection.

Combining the result of the lemma with the fact that $F$ is bounded {from $O_M$ to $B$} and Picard-Lindelöf theorem, we obtain:
\begin{theorem}\label{th:EUXlocal}
    Let $q_0\in \Lipa_c(\R^2;\R).$ Then there exists $T(M)>0$ such that the ODE $$\frac{d X}{d t}=F(X), \quad X(\cdot,t=0)=X_0,$$ has a unique solution $X(\cdot,t)\in \LipUa(\R^2;\R^2)$ for $t\in (-T(M),T(M)).$  
\end{theorem}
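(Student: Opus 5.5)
The plan is to deduce Theorem~\ref{th:EUXlocal} directly from Picard--Lindelöf (Theorem~\ref{th:Picard}), applied with $B=\LipUa(\R^2;\R^2)$, $O=O_M$ as in \eqref{def:obert}, and $F$ given by \eqref{eq:ODEF}. Two points must be checked: the initial datum lies in $O_M$, and $F:O_M\to B$ is well defined and locally Lipschitz. The initial datum is the identity map $X_0(\alpha)=\alpha$. It has $\det\nabla X_0\equiv1>\tfrac12$ and $\nabla X_0=I$, so $\Np{\nabla X_0}+\SNa{\nabla X_0}=1$. Since $\Np{X_0}=\infty$, I will follow the convention of \cite[Chapter 4]{MajdaBertozzi}: measure the $L^\infty$ part of the norm on $X-X_0$, i.e.\ work in the affine space $X_0+\LipUa$. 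This does not affect $F$, which only sees differences $X(\alpha)-X(\alpha')$. Then $X_0\in O_M$ for any $M>1$, and we fix such an $M$.

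\emph{Step 1: $F$ maps $O_M$ boundedly into $B$.}
\begin{itemize}
\item The bound $\Np{F(X)}\le cR\Np{q_0}$ follows from \eqref{eq:Ksna} after the measure-preserving change of variables $x'=X(\alpha',t)$, since $\det\nabla X>\tfrac12$ keeps the support measure comparable.
\item For $\nabla_\alpha F(X)$ I differentiate under the integral. Using the distributional identity \eqref{eq:derDistK}, this gives $\nabla_\alpha F(X)=\big(S(q_0\circ X^{-1})\circ X + c\,q_0\big)\nabla X$.
\item The first factor is bounded in $\Lipa$ via \eqref{eq:SIOepsilon} (choosing $\delta$ suitably) and \eqref{eq:SIOsna}. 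This is where the splitting $\nabla K=\SU+\SD$ of Remark~\ref{remark:1derivades} enters, and it yields constants independent of $\varepsilon$.
\item The compositions with $X$ and $X^{-1}$ are handled by \eqref{eq:SNaComp}, \eqref{eq:NaCompInversa} and \eqref{eq:algebra}, giving $\NUa{F(X)}\le C(M)\Na{q_0}$.
\end{itemize}

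\emph{Step 2: $F$ is locally Lipschitz on $O_M$.}
\begin{itemize}
\item Fix $X\in O_M$. Since $O_M$ is open, there is a ball $\mathcal B\subset O_M$ centered at $X$.
\item The ball is convex, so for $X_1,X_2\in\mathcal B$ the whole segment $X_1+\delta(X_2-X_1)$, $\delta\in[0,1]$, stays in $O_M$.
\item Along this segment, the mean-value identity written before Lemma~\ref{lemma:G(x)Y} holds. Lemma~\ref{lemma:G(x)Y} bounds the Gateaux derivative uniformly: $\NUa{F'(Z)Y}\le c\Na{q_0}\NUa{Y}$.
\item Hence $\NUa{F(X_1)-F(X_2)}\le c\Na{q_0}\NUa{X_1-X_2}$ on $\mathcal B$.
\item To make the mean-value identity rigorous, I also need $\delta\mapsto F(X_1+\delta(X_2-X_1))$ to be $C^1$ into $B$. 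This follows from the same estimates applied to differences of $\nabla K$ terms: continuity of $Z\mapsto F'(Z)$, obtained by one more application of \eqref{eq:DecGradK} and the gradient bound on $\SU$ from the proof of Lemma~\ref{lemma:SIO}.
\end{itemize}

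\emph{Step 3: conclusion.} Picard--Lindelöf now gives $T=T(M)>0$ and a unique solution $X\in C^1((-T,T);O_M)$. In particular $X(\cdot,t)\in\LipUa$ for $|t|<T$, which is the statement of Theorem~\ref{th:EUXlocal}. The dependence of $T$ on $M$ comes from the Lipschitz and bound constants, which depend on $M$ through the composition estimates. Uniqueness holds within $O_M$. Any other $\LipUa$ solution starting at $X_0$ stays in $O_M$ for short time, by continuity of $\det\nabla X$ and of the norm, so uniqueness extends to the full class.

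\emph{Expected main obstacle.} The analytic difficulty is entirely contained in Lemma~\ref{lemma:G(x)Y} (postponed) and in the analogous boundedness estimate of Step 1. There one must control the Hölder seminorm of the singular integral $S$ transported by a non-identity flow map $X$, and keep all constants uniform in $\varepsilon$, with the non-homogeneous part $\SD$ handled separately. Within the proof of the theorem itself, the only delicate point is the convexity and $C^1$-regularity argument that justifies the mean-value theorem on $O_M$.
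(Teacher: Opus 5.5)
Your proposal is correct and follows the paper's route: apply Picard--Lindel\"of on $O_M\subset\LipUa$, using boundedness of $F$ and deducing local Lipschitz continuity from Lemma~\ref{lemma:G(x)Y} through the mean-value argument. Your additions only make explicit what the paper leaves implicit: working in $X_0+\LipUa$ because the identity has infinite $L^\infty$ norm (the paper itself later does this with $U_\delta$), taking a convex ball inside $O_M$, and extending uniqueness beyond $O_M$; the one imprecision, shared by the paper, is that for a non-volume-preserving $X\in O_M$ the change of variables adds a factor $\det\nabla X^{-1}$ to the density, which is harmless because \eqref{eq:NaGradNabla} and $\det\nabla X>\tfrac12$ control it in $\Lipa$.
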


Moreover, since the velocity field is sufficiently smooth, we also obtain well-posedness in the Hölder class for the \eqref{def:QGSW}.
\begin{theorem}
    For $0<\gamma<1,$ if $q_0\in \Lipa_c(\R^2,\R),$ then there exists $T(M)>0$ such that the \eqref{def:QGSW} has a unique solution $q(\cdot,t)\in \Lipa_c(\R^2,\R)$ for any time $t\in (-T(M),T(M)).$  
\end{theorem}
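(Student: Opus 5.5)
The plan is to pass from the flow map of Theorem~\ref{th:EUXlocal} to the scalar $q$, following the Euler case in \cite[Chapter 4]{MajdaBertozzi}.

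\emph{Existence.} Take $X_0=\mathrm{id}$, which lies in $O_M$ for $M$ large. Theorem~\ref{th:EUXlocal} then gives a unique $X\in C^1((-T(M),T(M));O_M)$ solving \eqref{eq:ODEF}. Set $q(x,t)\coloneqq q_0(X^{-1}(x,t))$ and $v\coloneqq K*q$.

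The first step is to check that $\det\nabla_\alpha X\equiv 1$, so that the change of variables $x'=X(\alpha',t)$ used in deriving \eqref{eq:ODEF} can be reversed. Differentiating \eqref{eq:ODEF} in $\alpha$, using \eqref{eq:derDistK} and Lemma~\ref{lemma:G(x)Y}, gives $\frac{d}{dt}\nabla X=\mathcal{A}(\alpha,t)\nabla X$. The trace of $\mathcal{A}$ is $\operatorname{div}$ of a perpendicular-gradient field, so it vanishes. Concretely, the diagonal entries of $\SU$ and $\SD$ in Remark~\ref{remark:1derivades} sum to zero, and $\sum_i c_{i,i}=0$. Liouville's formula then gives $\det\nabla X\equiv1$.

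Consequently $F(X)(\alpha,t)=\int K(X(\alpha,t)-x')q(x',t)\,dx'=v(X(\alpha,t),t)$. So $X$ is the particle trajectory of $v$, and $q$ is constant along trajectories. Differentiating $q(X(\alpha,t),t)=q_0(\alpha)$ in $t$ yields $\partial_tq+v\cdot\nabla q=0$. This holds classically where $q$ is differentiable, and in the weak/Lagrangian sense in general, since $q\in\Lipa$ only.

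\emph{Regularity and support.} From \eqref{eq:NaCompInversa} with $n=2$ we get
\[
\Na{q(\cdot,t)}\le \Na{q_0}\bigl(1+\NUa{X}^{3\gamma}\bigr)\le \Na{q_0}(1+M^{3\gamma}).
\]
Also $\operatorname{supp}q(\cdot,t)=X(\operatorname{supp}q_0,t)$, which is compact because $\|X-\mathrm{id}\|_{L^\infty}\le |t|\,\sup\|v\|_{L^\infty}\le |t|\,cR\Np{q_0}$ by \eqref{eq:Ksna}. Hence $q(\cdot,t)\in\Lipa_c$. Since $v=K*q$ with $\nabla v$ given by Lemma~\ref{lemma:SIO} and \eqref{eq:derDistK}, we have $v\in\LipUa$, which justifies the ODE manipulations.

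\emph{Uniqueness.} Let $\tilde q\in\Lipa_c$ be another solution on the interval. Its velocity $\tilde v=K*\tilde q$ is $\LipUa$ by Lemma~\ref{lemma:SIO}, so it generates a $\LipUa$ flow $\tilde X$ that is measure preserving and satisfies $\tilde q(\cdot,t)=q_0\circ\tilde X^{-1}$. Running the change of variables again, $\tilde X$ solves \eqref{eq:ODEF}. Shrinking the time interval if necessary so that $\tilde X$ stays in $O_M$, which holds by continuity, the uniqueness part of Picard--Lindelöf forces $\tilde X=X$, and therefore $\tilde q=q$.

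\emph{Main obstacle.} The delicate step is the uniqueness argument: showing that an arbitrary Hölder solution gives rise to a flow lying in $O_M$ and solving the same Banach-space ODE. This requires the velocity estimates of Lemma~\ref{lemma:SIO} for $\tilde q$, plus a continuity argument to keep $\det\nabla\tilde X>\tfrac12$ and $\NUa{\tilde X}<M$. The incompressibility identity $\det\nabla X=1$ also has to be verified carefully, because of the delta term $c\,\delta_0$ in \eqref{eq:derDistK}.
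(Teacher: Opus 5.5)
Your proposal follows the same route as the paper. The solution is $q_0\circ X^{-1}$ with $X$ the flow from Theorem~\ref{th:EUXlocal}, and uniqueness comes from showing that any other Hölder solution is transported by a measure-preserving flow solving the same ODE \eqref{eq:ODEF}, so uniqueness of the flow map forces the two solutions to coincide. The extra details you supply (Liouville's formula for $\det\nabla X\equiv 1$, the Hölder bound via \eqref{eq:NaCompInversa}, compactness of the support) are correct and fill in steps the paper leaves implicit; just note that after shrinking the interval you need the usual continuation argument to get uniqueness on all of $(-T(M),T(M))$.
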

\begin{proof}
    Assume that $\Tilde{\rho}\in\Lipa_c$ and $\Tilde{v}\in\LipUa$ is also a solution to \eqref{def:QGSW} with the same initial data. Then we can find a trajectory $\Tilde{X}(\cdot,t)$ associated with $\Tilde{v}(\cdot,t)$ such that $\Tilde{\rho}$ is transported by $\Tilde{X}.$ But by uniqueness of the flow map, Theorem \ref{th:EUXlocal}, $X=\Tilde{X}$ so $\Tilde{\rho}(\cdot,t)=\rho_0(\Tilde{X}^{-1}(\cdot,t))=\rho_0(X^{-1}(\cdot,t))=\rho(\cdot,t)$ and by convolving with the kernel we also have that $\Tilde{v}=v.$

    {Finally, we note that all solutions arise from trajectories, since $v\in\LipUa$; see Subsection~\ref{subsec:unqWS} for the general argument in lower regularity. This concludes the proof.}
\end{proof}

We conclude this subsection with the proof of Lemma~\ref{lemma:G(x)Y}.
\begin{proof}[Proof of Lemma \ref{lemma:G(x)Y}]
    In order to ease the notation, we omit the time dependence of $X$ and $Y$. 

    We first estimate $\Np{F'(X)Y}$. Since the set $O_M$ consists of one-to-one and onto functions, the change of variables $X^{-1}(x)=\alpha$ gives 
    \begin{equation*}
        F'(X)Y\circ X^{-1}(x) =\int_{\R^2} \nabla K(x-x') (Y(X^{-1}(x))-Y(X^{-1}(x')))q_0(X^{-1}(x'))dx',
    \end{equation*}
    By the mean value inequality, $$|Y(X^{-1}(x))-Y(X^{-1}(x'))|\leq \Np{(\nabla_\alpha Y\circ X^{-1})\nabla_x X^{-1 }}|x-x'|,$$ and combining this with \eqref{eq:DecGradK} we have that 
    \begin{equation}\label{eq:LipEstimacioInf}
          \begin{aligned}
        |F'(X)Y\circ X^{-1}(x)|&\leq c\Np{(\nabla_\alpha Y\circ X^{-1})\nabla_x X^{-1 }}\Np{f} \\&\left(\int_{|x-x'|\leq 1}|x-x'|^{-2+1}dx'+\int_{\substack{|x-x'|\geq 1\\ x'\in \operatorname{supp}\; f }}|x-x'|^{-2+1}dx'\right)\\
        &\leq  c\Np{(\nabla_\alpha Y\circ X^{-1})\nabla_x X^{-1 }}\Na{f} 
    \end{aligned}
    \end{equation}  
    where, $f(x')\coloneqq q_0(X^{-1}(x'))$, {and the constant $c$ depends on the support of $q_0$.}

    Using \eqref{eq:NaCompInversa} and \eqref{eq:NaGradNabla}, we obtain
    \begin{align*}
        \Na{f(x')} &\leq \Na{q_0\circ X^{-1} (x')}\leq c \Na{q_0} (1+\NUa{X}^{3\gamma}).
    \end{align*}
    {Since $X \in O_M$ (see \eqref{def:obert}), both $\NUa{X}$ and $\Na{\nabla X}$ are uniformly controlled, and thus all $X$–dependent factors are absorbed into the constant $c$.} Applying this to \eqref{eq:LipEstimacioInf} yields
    $$\Np{F'(X)Y}\leq c \NUa{Y} \Na{{q_0}}.$$

    Next, we estimate the Hölder norm of $\nabla F'(X) Y.$ Thus, we need first to calculate the distributional derivative of $H(x)=F'(X)Y\circ X^{-1}(x).$ To do so, we follow the scheme in \cite[p. 165]{MajdaBertozzi} for the Euler equation.   

    Rewrite $H(x)$ as $$H(x)=\int_{\R^2}R(x,x')f(x')dx'$$ $$R(x,x')=\nabla K(x-x') (Y(X^{-1}(x))-Y(X^{-1}(x'))).$$

    We want to compute the distributional derivative of the kernel $R(x+x',x').$ To do so, note previously that, given $h>0$ and $a\in\R^2, |a|=1,$ we have, by Taylor expansion of $Y\circ X^{-1}$ and \eqref{propietat:besselKprop} 
    {$$\lim_{h\to 0} R(x'+ah,x')h=\nabla[Y\circ X^{-1}](x) c(a).$$ }

    {Indeed, recall that
\[
\nabla K \cdot Y =
\begin{pmatrix} \partial_1 K^{(1)} & \partial_2 K^{(1)} \\[1mm] \partial_1 K^{(2)} & \partial_2 K^{(2)} \end{pmatrix} 
\begin{pmatrix} Y_1 \\ Y_2 \end{pmatrix},
\] 
and focus on the contribution from the second row, first column, i.e., $\partial_1 K^{(2)}\, Y_1$; the remaining terms are analogous. Then, from \eqref{eq:KernelPartial1}, we have}

    \begin{align*}
        &\lim_{h\to 0} \partial_1 K^{(2)}(ah)\big(Y_1(X^{-1}(x'+ah))-Y_1(X^{-1}(x'))\big)h \\&= \lim_{h\to 0} \frac{\varepsilon}{2\pi}\left[\frac{a_2^2-a_1^2}{h}K_1(\varepsilon|ah|)-\varepsilon a_1^2 K_0(\varepsilon|ah|)\right] (\nabla[Y_1\circ X^{-1}](x')\cdot ah + o(h)) h
        \\ &= C_a \nabla[Y_1\circ X^{-1}](x'){\cdot a}
        \end{align*}
    where in the last step we use that by \eqref{propietat:besselKprop} we have that $K_1(\varepsilon h ) \sim \frac{1}{\varepsilon h}$ when $h$ is small and $K_0(\varepsilon h)\sim \ln{(\varepsilon h)}$, and therefore $$\lim_{h\to 0} \frac{\varepsilon}{2\pi} \frac{a_2^2-a_1^2}{h}K_1(\varepsilon |ah|) h^2= \frac{a_2^2-a_1^2}{2\pi} C,$$ $$\lim_{h\to 0} \frac{\varepsilon^2}{2\pi} a_1^2 K_0(\varepsilon |ah|)h^2=\lim_{h\to 0}\frac{\varepsilon^2}{2\pi} {a_1^2} \ln{(\varepsilon |ah|)} h^2=0. $$
    
    We are now ready to compute the desired distributional derivative. Let $\varphi\in C^\infty_c(\R^2;\R)$ a test function, then
    
    \begin{align*}
        \langle R(x+x',x'), \partial_{x_k} \varphi \rangle &= {-} \int_{\R^2} \partial_{x_k} \varphi(x) R(x+x',x')dx=-\lim_{\delta\to 0}\int_{|x|\geq \delta} \partial_{x_k} \varphi(x) R(x+x',x')dx\\
        &= \lim_{\delta\to 0}\int_{|x|\geq \delta} \varphi(x) \partial_{x_k} R(x+x',x')dx - \lim_{\delta\to 0}\int_{|x|=\delta} \varphi(x) R(x+x',x') \frac{x_k}{|x|}{ds} \\
        &= p.v. \int \varphi(x) \partial_{x_k} R(x+x',x')dx-c\nabla_\alpha Y(X^{-1}(x))\nabla_xX^{-1}(x)  \varphi(0),
    \end{align*}
    where the equality in the third line follows from Green’s first identity and in the last equality, we used the observation made above {by passing to polar coordinates in the boundary integral, with $x=\delta\,e^{i\theta}$ playing the role of $a h$.}

    {Then, since $\partial_{x_k} R(x, x') = \partial_{x_k} [R(\cdot + x', x')](x - x')$, we get} that the distributional derivative of $H(x)$ can be written as 
    \begin{align*}
        \partial_{x_k}H(x)&=p.v. \int \partial_{x_k} \nabla K(x-x')(Y(X^{-1}(x))-Y(X^{-1}(x')))f(x')dx'\\
        &+p.v. \int \nabla K(x-x')\nabla_\alpha Y(X^{-1}(x)) \partial_{x_k} X^{-1}(x)f(x')dx'\\
        &-c\nabla_\alpha Y(X^{-1}(x))\nabla_xX^{-1}(x)f(x)\eqqcolon I+II+III
    \end{align*}
    Applying \eqref{eq:algebra} again, and estimating $\Na{f}$ as before, it is easy to check that $$\Na{III}\leq c \Na{q_0} \NUa{Y}.$$ 

   {For $II$, we set 
\[
\tilde f(x') \coloneqq \nabla_\alpha Y(X^{-1}(x)) \, \partial_{x_k} X^{-1}(x) \, f(x').
\]
Since $f\in \Lipa_c$ and $X,Y\in O_M$, $\tilde f$ is also in $\Lipa_c$. 
Applying \eqref{eq:SIOepsilon} and \eqref{eq:SIOsna} from Lemma~\ref{lemma:SIO} to the kernel $P = \nabla K$ with $\tilde f$ and using once more the estimate for $\Na{f}$ we obtain
\[
\Na{II}\leq c \Na{q_0} \NUa{Y}.
\]
}


    To estimate $I$ we follow the proof of \cite[Lemma 15]{cantero2022cgamma}. The main difference being that since $K$ or its derivatives are not homogeneous, but one has explicit formulas for them, {so} more computations are needed.
    
    Notice that $Y\circ X^{-1} \in \LipUa.$ Thus, we can write its Taylor series centered at $x'$ as $$Y(X^{-1}(x))=Y(X^{-1}(x'))+\sum_{i=1}^2\partial_i (Y\circ X^{-1})(x')(x_i-x_i')+{\mathcal{R}}(x,x')$$ with $|\mathcal{R}(x,x')|\leq c \NUa{Y\circ X^{-1}} |x-x'|^{1+\gamma}.$ Now, if we add and subtract some terms to $I$ we obtain   
    \begin{align*}
        I & = p.v. \int \partial_{x_k} \nabla K(x-x')(Y(X^{-1}(x))-Y(X^{-1}(x'))-\nabla Y(X^{-1}(x'))(x-x') )f(x')dx' \\ &+ \sum_{i=1}^2 p.v. \int (x_i-x_i')\partial_{x_k} \nabla K(x-x')\partial_i (Y\circ X^{-1})(x')f(x')dx' \coloneqq I_1f(x) + I_2f(x)
    \end{align*}
    
    We compute now some partials of $\nabla K$, the ones not computed are symmetrical to one of the following. To do so, recall the  expression of $\nabla K$ shown on \eqref{eq:KernelPartial1}. 
    \begin{multline*}
        \partial_1 \left(\frac{\varepsilon}{2\pi}\frac{\varepsilon x_1 x_2}{|x|^3}K_2(\varepsilon|x|)\right)= \frac{\varepsilon}{2\pi} \left[ \frac{\varepsilon x_2(x_2^2-x_1^2)}{|x|^4}K_2(\varepsilon|x|)+\frac{\varepsilon^2 x_1^2x_2}{|x|^3}K_2'(\varepsilon|x|)\right] \\=\frac{\varepsilon}{2\pi}\left[ \frac{\varepsilon x_2(x_2^2-x_1^2)}{|x|^4}K_2(\varepsilon|x|)+\frac{\varepsilon^2 x_1^2x_2}{|x|^3}\left(-K_3(\varepsilon |x|) +\frac{2}{\varepsilon |x|}K_2(\varepsilon |x|) \right)\right]
    \end{multline*}
    where in the last equality we used \eqref{propietat:besselDerivades}.

    Looking at the above expression, it is clear that $x_1\partial_1 \left(\frac{\varepsilon}{2\pi}\frac{\varepsilon x_1 x_2}{|x|^3}K_2(\varepsilon|x|)\right)$ has zero integral on spheres. Also, by \eqref{eq:controlBessel} one gets that it is controlled by $\frac{C}{|x|^2}$ with $C$  independent of $\varepsilon$.

    If instead of using the expression used before for $K_2'$ one uses the first equality of \eqref{propietat:besselDerivades}  
    \begin{multline*}
        x_2\partial_1 \left(\frac{\varepsilon}{2\pi}\frac{\varepsilon x_1 x_2}{|x|^3}K_2(\varepsilon|x|)\right) =\\ \frac{\varepsilon^2}{2\pi}\left[\frac{ x_2^2(x_2^2-x_1^2)}{|x|^4}K_2(\varepsilon|x|)+\frac{\varepsilon x_1^2x_2^2}{|x|^3}\left(-K_1(\varepsilon |x|) -\frac{2}{\varepsilon |x|}K_2(\varepsilon |x|) \right) \right]= \\ \frac{\varepsilon^2}{2\pi}\left[
        -\frac{\varepsilon x_1^2 x_2^2}{|x|^3}K_1(\varepsilon |x|)+\frac{x_2^2(x_2^2-3x_1^2)}{|x|^4}K_2(\varepsilon |x|)\right]
    \end{multline*}
    which has a first part that has not zero integral on spheres but it decays faster near the origin, in fact by \eqref{eq:IntdefBessel}, it has finite integral, and a second term that behaves as $x_1\partial_1 \left(\frac{\varepsilon}{2\pi}\frac{\varepsilon x_1 x_2}{|x|^3}K_2(\varepsilon|x|)\right)$. {Indeed, by \eqref{eq:controlBessel} it is clear that both have similar decay and \begin{align*}
\int_{|x|=R} \frac{x_2^2(x_2^2-3x_1^2)}{|x|^4}\,K_2(\varepsilon|x|)\,dS
&= K_2(\varepsilon R)\int_0^{2\pi} 
\bigl(\sin^4\theta - 3\sin^2\theta\cos^2\theta\bigr)\,R\,d\theta \\
&= K_2(\varepsilon R)\,R
\int_0^{2\pi} \frac{ -\cos(2\theta)+\cos(4\theta)}{2}\,d\theta = 0.
\end{align*}
}

    For the rest of the components, we differentiate using the form given in Remark \ref{remark:1derivades} and the first equality in \eqref{propietat:besselDerivades}:
    \begin{multline*}
        \frac{\varepsilon}{2\pi}\partial_1 \left(\frac{x_2^2-x_1^2}{|x|^3}K_1(\varepsilon |x|)-\frac{\varepsilon x_1^2}{|x|^2}K_0(\varepsilon |x|)\right)=\frac{\varepsilon}{2\pi}\Bigg[  \frac{2x_1(x_1^2-3x_ 2^2)}{|x|^5}K_1(\varepsilon |x|)\\-\varepsilon\left(\frac{x_1(x_2^2-x_1^2)}{|x|^4}+\frac{2x_1x_2^2}{|x|^4}\right)K_0(\varepsilon |x|) +\frac{\varepsilon^2 x_1^3}{|x|^3}K_1(\varepsilon |x|)\Bigg]
    \end{multline*}
    it is clear from this expression and \eqref{eq:controlBessel} that when either multiplied by $x_1$ or $x_2$ there is a first term with a decay such as $|x|^{-2}$ with zero integral on spheres and other terms which may or may not have zero integral but are integrable near the origin.

    Finally, by proceeding in the same way,
    \begin{multline*}
        \frac{\varepsilon}{2\pi}\partial_2 \left(\frac{x_2^2-x_1^2}{|x|^3}K_1(\varepsilon |x|)-\frac{\varepsilon x_1^2}{|x|^2}K_0(\varepsilon |x|)\right)= \frac{\varepsilon}{2\pi} \Bigg[ \frac{-2x_2(x_2^2-3x_ 1^2)}{|x|^5}K_1(\varepsilon |x|)\\-\varepsilon\left(\frac{x_2(x_2^2-x_1^2)}{|x|^4}-\frac{2x_1^2x_2}{|x|^4}\right)K_0(\varepsilon |x|) +\frac{\varepsilon^2 x_1^2x_2}{|x|^3}K_1(\varepsilon |x|)\Bigg]
    \end{multline*}
    so the same comment as before can be done. 
    
    Notice that all the integrable terms once written in polar coordinates are like either $\varepsilon^2 rK_0(\varepsilon r)$ or $\varepsilon^3 r^2 K_1(\varepsilon r),$ so after the change of variables $r'=\varepsilon r,$ no $\varepsilon$ remain. Therefore, the constants arising from these terms are independent of $\varepsilon.$   
    
    {From the computations above, it is clear that the kernels $$(x_i-x_i')\partial_{x_k}\nabla K(x-x')$$ behave like $\nabla K$. Hence, we can apply \eqref{eq:SIOsna} to bound $I_2 f(x)$, which can be interpreted as a convolution with these kernels:
\[
\Np{I_2 f} \leq C  \NUa{Y\circ X^{-1}} \Np{f}.
\]
}

    To bound $\Na{I_1f}$ notice that by using the same argument as in the proof of Lemma \ref{lemma:SIO}, $|\partial_{x_k} \nabla K(x-x')|\leq C|x|^{-3},$ and $|\partial_{x_k}\partial_{x_j} \nabla K(x-x')|\leq C|x|^{-4}$. Thus, if we define $$G(x,x')\coloneqq \partial_{x_k} \nabla K(x-x')(Y(X^{-1}(x))-Y(X^{-1}(x'))-\nabla Y(X^{-1}(x'))(x-x') )$$ we  have 
    $$|G(x,x')|\leq \frac{C \NUa{Y\circ X^{-1}}}{|x-x'|^{2-\gamma}} \text{ and } |\nabla_x G(x-x')|\leq \frac{C \NUa{Y\circ X^{-1}}}{|x-x'|^{3-\gamma}}.$$

    Let $x_1,x_2\in \R^2$ and $B\coloneqq B(x_1,3|x_1-x_2|).$ Then
    \begin{align*}
        I_1f(x_1)-I_1f(x_2) &= \int_{\R^2\setminus B} (G(x_1,x')-G(x_2,x'))f(x')dx' \\ &+\int_B G(x_1,x')f(x')dx'-\int_B G(x_2,x')f(x')dx'
    \end{align*}
    so, by the mean value theorem and the bounds for $G$ and $\nabla_x G,$
    $$|I_1f(x_1)-I_1f(x_2)|\leq c \NUa{Y\circ X^{-1}} \Np{f} |x_1-x_2|^\gamma.$$

    Gathering all of the above estimates and bounding $\Na{f}$ as shown, one gets the desired result $$\NUa{F'(X)Y}\leq c \Na{q_0}\NUa{Y}.$$ 
\end{proof}

\subsection{Global existence of smooth solutions}
 In the previous subsection, we showed that the quasi-geostrophic shallow water equation, \eqref{def:QGSW}, admits a unique local-in-time solution if the initial data is in $\Lipa.$ In this section, we claim that such a solution actually exists for all time.

We will only provide a sketch of the proof and omit most of the details, since it closely follows the argument for the Euler equations given in \cite[Chapter 4]{MajdaBertozzi}, and it relies on the following classical continuation result for ODEs.
\begin{theorem}
    Let $O\subset B$ be an open subset of a Banach spaces $B,$ and let $F:O\to B$ be a locally Lipschitz continuous operator. Then the unique solution $X\in C^1([0,T];O)$ to the autonomous ODE, $$\frac{dX}{dt}=F(X),\quad X|_{t=0}=X_0\in O,$$ either exists globally in time or $T<\infty$ and $X(t)$ leaves the open set $O$ as $t\to T.$
\end{theorem}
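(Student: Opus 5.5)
We will read ``$X(t)$ leaves the open set $O$ as $t\to T$'' in the sense that can be proved for a merely locally Lipschitz $F$: $X(t)$ has no accumulation point in $O$ as $t\to T$. That is, there is no sequence $t_n\uparrow T$ with $X(t_n)\to X^*\in O$. In particular $X(t)$ eventually leaves every compact subset of $O$. A stronger reading, such as leaving every closed bounded set at positive distance from $\partial O$, does not follow in infinite dimensions, since local Lipschitz continuity does not give boundedness of $F$ on bounded sets. So we prove the accumulation-point version, which is what the stated hypothesis can support.

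\textbf{Step 1: the maximal solution.} Let $T^*\in(0,\infty]$ be the supremum of all $T$ for which a solution $X\in C^1([0,T];O)$ exists; Theorem~\ref{th:Picard} guarantees $T^*>0$. Two solutions on $[0,T_1]$ and $[0,T_2]$ agree on their common interval. To see this, let $s$ be the supremum of times up to which they agree. By continuity they agree at $s$. If $s$ were strictly less than $\min(T_1,T_2)$, the uniqueness part of Theorem~\ref{th:Picard} applied with initial datum at time $s$ would make them agree slightly beyond $s$, a contradiction. Gluing all solutions therefore gives a unique $X\in C^1([0,T^*);O)$. If $T^*=\infty$, the solution is global and we are done.

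\textbf{Step 2: a uniform existence time near a point of $O$.} This is the key quantitative step. Fix $X^*\in O$. Choose $r>0$ such that $\overline{B(X^*,2r)}\subset O$ and $F$ is Lipschitz there with some constant $L$; shrinking $r$ if needed, $F$ is also bounded there, say $\|F\|\le M$ on $B(X^*,2r)$, because $\|F(Z)\|\le \|F(X^*)\|+2rL$. Now take any $Z_0\in B(X^*,r)$ and run the Picard iteration $Z\mapsto Z_0+\int_{t_0}^{t}F(Z(s))\,ds$ on $C([t_0,t_0+\tau];\overline{B(Z_0,r)})$ with $\tau=\min\{r/M,\,1/(2L)\}$. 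This map sends the space into itself and is a contraction. Hence a solution starting at $Z_0$ exists on an interval of length $\tau$, and $\tau$ does not depend on $Z_0\in B(X^*,r)$. The whole point is that this existence time is uniform in the starting point, rather than depending on the individual datum as in Theorem~\ref{th:Picard}.

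\textbf{Step 3: contradiction.} Suppose $T^*<\infty$ and there are $t_n\uparrow T^*$ with $X(t_n)\to X^*\in O$. Apply Step 2 at $X^*$ and pick $n$ with $X(t_n)\in B(X^*,r)$ and $T^*-t_n<\tau/2$. Solve from $X(t_n)$ at time $t_n$; this solution exists up to $t_n+\tau>T^*$. By the uniqueness argument of Step 1 it coincides with $X$ on $[t_n,T^*)$. Concatenating the two gives a $C^1$ solution in $O$ on an interval strictly longer than $[0,T^*)$, which contradicts the maximality of $T^*$. Hence either $T^*=\infty$, or $T^*<\infty$ and $X(t)$ leaves $O$ as $t\to T^*$ in the sense above.

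The only real obstacle is Step 2, and it is resolved by bounding $F$ through its Lipschitz constant on a small ball around $X^*$.
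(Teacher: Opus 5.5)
The paper does not prove this statement. It quotes it as a classical continuation result for ODEs, with the surrounding argument following \cite{MajdaBertozzi}, and uses it as a black box. So your proposal supplies a proof the paper omits rather than following a different route from one.

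Your argument is the standard one and it is correct. Step~2 is where one needs a local existence time that is uniform over initial data near $X^*$. Your contraction on $C([t_0,t_0+\tau];\overline{B(Z_0,r)})$, with $\overline{B(Z_0,r)}\subset B(X^*,2r)$ and $\tau=\min\{r/M,1/(2L)\}$, provides exactly that. Step~3 then rules out any accumulation point of $X(t)$ in $O$ as $t\uparrow T^*<\infty$.

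Your caution about the meaning of ``leaves $O$'' is well founded. For a merely locally Lipschitz $F$ on an infinite-dimensional space, a maximal solution can stay bounded, even with $O=B$, and still cease to exist in finite time. So the accumulation-point formulation is the correct general one. Relatedly, the closed interval $[0,T]$ in the statement has to be read as $[0,T^*)$.

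One small point: the uniqueness you invoke in Step~1 is one-sided, forward from a time $s$ that may be $0$. Theorem~\ref{th:Picard} is stated on symmetric intervals $(-T,T)$. This is harmless, since your Step~2 contraction, or a Gr\"onwall estimate, gives forward uniqueness directly.

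It is also worth seeing what your proof buys relative to how the paper uses the result. In the global-existence part of Section~\ref{sec:regular}, the paper concludes that ``the only way a solution can leave $O_M$ is for $\|X\|_{1,\gamma}$ to become unbounded.'' Your accumulation-point version does not give this by itself. Bounded subsets of $\mathcal{C}^{1,\gamma}$ are not compact, so an a~priori bound on $\|X\|_{1,\gamma}$ produces no accumulation point.

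The application is rescued by the structure of $F$ there. It is bounded on $O_M$, and by Lemma~\ref{lemma:G(x)Y} its derivative is bounded on $O_M$ with a constant independent of $X$. Running your Step~2 with these uniform constants gives an existence time that depends only on $\operatorname{dist}(Z_0,B\setminus O_M)$. Your Step~3 then yields $\operatorname{dist}(X(t),B\setminus O_M)\to 0$ as $t\uparrow T^*<\infty$, which is the form actually needed.
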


Since the particle trajectories are volume preserving, as the {velocity field} is divergence-free, the only way that a solution can leave the set $O_M$ is for $\NUa{X}$ to become unbounded as $t\to T^*.$ Thus, we obtain a sufficient condition for global-in-time existence if we give a sufficient condition for $\NUa{X}$ to be a priori bounded. {This, in turn, is guaranteed by an a priori bound on $\int_0^t \Np{\nabla v(\cdot, s)}\,ds$.}


Finally, one can see that $\int_0^t \Np{\nabla v(\cdot, s)} ds$ is controlled by $\int_0^t \Np{ q(\cdot, s)} ds,$ and since $q$ is conserved along particle trajectories it cannot become unbounded. As a consequence, we have proven Theorem \ref{th:EUholder}. That is for $0<\gamma<1,$ if $q_0\in \Lipa_c(\R^2,\R),$ then the \eqref{def:QGSW} has a unique solution $q(\cdot,t)\in \Lipa_c(\R^2,\R)$ for any time $t\in\R$. 

\begin{remark}
    The proofs of this part rely mainly on Grönwall's lemma, and the observation that by \eqref{eq:derDistK} and $v=K*q$ we have $$\nabla v = p.v. \int_{\R^2} \nabla K(x-x')q(x',t)dx'+cq(x,t)$$ and \eqref{eq:SIOepsilon} from Lemma \ref{lemma:SIO}. 

   It is worth noting that, with the previous ingredients, the proof in this case is simpler than that in \cite{MajdaBertozzi}, since they work with the vorticity formulation of the three-dimensional Euler equations, which is not a transport equation.
\end{remark}

\section{Weak solutions to the QGSW equation} \label{sec:weak}
The goal of this section is to address the Yudovich problem, namely, to prove well-posedness for initial data $q_0\in L_c^\infty(\R^2;\R).$ For such non-smooth data, one needs to work with the weak formulation of \eqref{def:QGSW}

\begin{definition}\label{def:weakSol}
    Given $q_0\in L_c^\infty(\R^2;\R),$ the pair $(v,q)$ is a weak solution to \eqref{def:QGSW} with initial data $q_0(x),$ provided that \begin{enumerate}[(i)]
        \item $q\in L^\infty \big([0,T];L_c^\infty(\R^2;\R)\big),$
        \item $v= K*q,$
        \item for all $\varphi\in C^1\big([0,T];C_0^1(\R^2;\R)\big)$ $$\int_{\R^2} \varphi(x,T)q(x,T)dx-\int_{\R^2}\varphi(x,0)q_0(x)dx=\int_0^T\int_{\R^2}\left(\frac{d\varphi}{dt}+v\cdot\nabla\varphi\right) q dxdt$$
    \end{enumerate}
\end{definition}

\subsection{Existence of weak solutions}
{To prove the existence of weak solutions to \eqref{def:QGSW}, we follow the two-step argument presented in \cite[Chapter 8]{MajdaBertozzi}. In this subsection we only give a general idea, since the proof for the Euler equation can be mostly replicated due to the similar properties of the defining kernels, even though the QGSW kernel is not homogeneous. Any significant differences are explicitly mentioned. We recommend consulting \cite[Chapter 8]{MajdaBertozzi} for full details.}

The first step is to smooth the initial data via a convolution with a mollifier and then apply Theorem \ref{th:EUholder} to get pairs $(v^\delta,q^\delta)$ which are smooth solutions, where $\delta$ is the mollification parameter. 

The second step is to extract subsequences $(v^{\delta'})$ and $(q^{\delta'})$ that converge to a pair $(v,q)$ satisfying the weak formulation of Definition \ref{def:weakSol}. This can be formalized in the following proposition:

\begin{prop}
    Let $q_0\in L_c^\infty(\R^2;\R),$ and let $q^\delta, v^\delta$ be a smooth solution on $[0,T]$ with the regularized initial data $$q^\delta_0(x)=\delta^{-2}\int_{\R^2} \rho\left(\frac{x-y}{\delta}\right)q_0(y)dy,$$ for $\delta>0$ and $\rho\in C^\infty_0(\R^2)$ nonnegative and with integral equal to one. Then, for all $t\in[0,T],$ 
    \begin{enumerate}[(i)]
        \item $q^\delta,v^\delta$ are uniformly bounded and $$\Np{v^\delta(\cdot, t)}\leq c\|q^\delta(\cdot,t)\|_{L^1 \cap L^\infty} \leq C\|q_0\|_{L^1 \cap L^\infty},$$
        \item there exist functions $q(\cdot,t)\in L_c^\infty(\R^2;\R)$ and $v=K*q$ such that for all $t\in [0,T]$ $$q^\delta(\cdot,t)\to q(\cdot,t) \text{ in } L^1,$$ $$v^\delta(\cdot,t)\to_x v(\cdot,t) \text{ locally.}$$        
    \end{enumerate}
\end{prop}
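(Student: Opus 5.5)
The plan follows the Euler argument of \cite[Chapter 8]{MajdaBertozzi}, using only two kernel facts that hold uniformly in the mollification parameter $\delta$: the pointwise bounds of Lemma~\ref{lemma:propietatsK} and the Log-Lipschitz estimate for $K*q$. First come the uniform bounds in (i). Since $v^\delta$ is divergence free, $q^\delta(\cdot,t)=q^\delta_0\circ X_\delta^{-1}(\cdot,t)$ with $X_\delta$ measure preserving. Hence $\Np{q^\delta(\cdot,t)}=\Np{q_0^\delta}\le\Np{q_0}$ and $\|q^\delta(\cdot,t)\|_{L^1}=\|q^\delta_0\|_{L^1}\le\|q_0\|_{L^1}$. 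For the velocity, \eqref{eq:DecK} gives
\begin{equation*}
|v^\delta(x,t)|\le C\int_{|x-y|\le 1}\frac{|q^\delta(y,t)|}{|x-y|}dy+C\int_{|x-y|>1}|q^\delta(y,t)|dy\le c\|q^\delta(\cdot,t)\|_{L^1\cap L^\infty}.
\end{equation*}
The supports also stay in a fixed ball $B(0,R_T)$ for $t\in[0,T]$, because particles move with speed at most $\sup\Np{v^\delta}$, which is bounded uniformly in $\delta$.

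Next, I would prove that $v^\delta$ is Log-Lipschitz in $x$ uniformly in $\delta,t$. For $|x-y|=h<1/2$, split $\R^2$ into $B(x,2h)$ and its complement. On the ball, use \eqref{eq:DecK}, which gives a contribution $O(h)$. Off the ball, use the mean value theorem with \eqref{eq:DecGradK}, which gives $Ch\int_{2h<|x-z|<R'}|x-z|^{-2}dz\lesssim h|\log h|$, plus an $O(h)$ far-field term. The constants depend only on $\|q_0\|_{L^1\cap L^\infty}$, not on $\varepsilon$ or $\delta$. Consequently the flows $X_\delta$, and their inverses obtained by running the flow backward, are equicontinuous in $(\alpha,t)$ with a uniform Hölder modulus $|\alpha-\beta|^{e^{-Ct}}$, by the standard Osgood/Grönwall argument. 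They are also uniformly bounded on compacts and Lipschitz in $t$.

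For (ii), apply Arzelà–Ascoli to $X_\delta^{-1}$ on compact sets of $\R^2\times[0,T]$ and extract a subsequence converging locally uniformly to a measure-preserving homeomorphism $Y(\cdot,t)$. Set $q(\cdot,t)=q_0\circ Y(\cdot,t)$. The $L^1$ convergence is the main obstacle, since $q_0$ is merely bounded and composition is not continuous under uniform convergence. I would approximate $q_0$ in $L^1$ by a continuous compactly supported $g$ and write
\begin{equation*}
\|q^\delta-q\|_{L^1}\le\|(q_0^\delta-q_0)\circ X_\delta^{-1}\|_{L^1}+\|(q_0-g)\circ X_\delta^{-1}\|_{L^1}+\|g\circ X_\delta^{-1}-g\circ Y\|_{L^1}+\|(g-q_0)\circ Y\|_{L^1}.
\end{equation*}
Measure preservation turns the first, second and fourth terms into $\|q_0^\delta-q_0\|_{L^1}$ and $\|q_0-g\|_{L^1}$, both small. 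The third term tends to $0$ by uniform continuity of $g$ and the uniform convergence on the fixed ball $B(0,R_T)$. The limit $q(\cdot,t)$ is bounded with support in $B(0,R_T)$.

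Finally, define $v=K*q$. For $|x|\le L$, bound $|v^\delta(x,t)-v(x,t)|$ by splitting $K=K\chi_{|y|<r}+K\chi_{|y|\ge r}$. The near part contributes at most $Cr\|q^\delta-q\|_{L^\infty}\le 2Cr\Np{q_0}$ by \eqref{eq:DecK}. The far part is bounded by $C r^{-1}\|q^\delta-q\|_{L^1}$. Choosing $r$ small first and then $\delta$ small gives uniform convergence on compacts. The same argument applied to any sequence shows the whole statement along the extracted subsequence $\delta'$.
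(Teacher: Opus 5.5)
Your proposal is correct and follows essentially the route the paper takes: the paper defers to the Majda--Bertozzi Chapter~8 argument, namely uniform $L^1\cap L^\infty$ bounds from volume preservation, the Log-Lipschitz estimate of Lemma~\ref{lemma:vlog-lip} giving uniformly H\"older flows, Arzel\`a--Ascoli, and approximation of $q_0$ by continuous data to get $L^1$ convergence. One small correction: the inverse flows $X_\delta^{-1}$ need not be uniformly Lipschitz in $t$, only uniformly H\"older in $t$ with exponent $e^{-CT}$ (obtained by combining the time-Lipschitz bound for $X_\delta$ with the spatial H\"older bound for $X_\delta^{-1}$), and this weaker bound is still enough for the joint equicontinuity that Arzel\`a--Ascoli requires.
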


{To prove the second part of the previous proposition, the divergence-free property of the velocity field, which ensures volume-preserving trajectories, is crucial. The following lemma, which will be important for the uniqueness, is also needed.}

\begin{lemma}\label{lemma:vlog-lip}
    Let the initial vorticity $q_0\in L_c^\infty(\R^2;\R)$ and let $q^\delta, v^\delta$ be a smooth solution on $[0,T]$ with the regularized initial data $q^\delta_0(x).$ Then $v^\delta(\cdot,t)$ is Log-Lipschitz continuous: $$\sup_{0\leq t \leq T} |v^\delta(x^1,t)-v^\delta(x^2,t)|\leq c \|q_0\|_{L^1\cap L^\infty}\ell(|x^1-x^2|),$$ where $\ell (x)=|x|(1-\ln^-{|x|})$ and $\ln^-{a}=\ln{a}$ for $0<a<1$ and $\ln^-{a}=0$ for $a\geq 1.$
\end{lemma}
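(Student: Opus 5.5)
The argument is the classical one for the Biot–Savart kernel (Majda–Bertozzi, Lemma 8.1). Only two properties of $K$ are needed, and Lemma~\ref{lemma:propietatsK} supplies both with constants independent of $\varepsilon$: the pointwise bound \eqref{eq:DecK}, and the difference bound \eqref{eq:difK}, or equivalently the gradient bound \eqref{eq:DecGradK}.

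Fix $t$, write $q=q^\delta(\cdot,t)$, and set $r=|x^1-x^2|$. We use two facts about $q^\delta(\cdot,t)$. First, $q^\delta(\cdot,t)$ is transported by a volume-preserving flow (Theorem \ref{th:EUholder}), so $\|q\|_{L^\infty}\le \|q_0^\delta\|_{L^\infty}\le\|q_0\|_{L^\infty}$ and $\|q\|_{L^1}\le \|q_0\|_{L^1}$. Second, mollification does not increase these norms. Hence every bound below may be stated in terms of $\|q_0\|_{L^1\cap L^\infty}$, uniformly in $t\in[0,T]$ and in $\delta$.

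\emph{Case $r\ge 1$.} By \eqref{eq:DecK},
\[
|v(x)|\le C\int \frac{|q(y)|}{|x-y|}\,dy .
\]
Split the integral over $|x-y|<1$ and $|x-y|\ge1$. The first piece is bounded by $C\|q\|_{L^\infty}$ and the second by $C\|q\|_{L^1}$. Therefore
\[
|v(x^1)-v(x^2)|\le 2\|v\|_{L^\infty}\le C\|q_0\|_{L^1\cap L^\infty}\le C\|q_0\|_{L^1\cap L^\infty}\,\ell(r).
\]

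\emph{Case $r<1$.} Let $\bar x=(x^1+x^2)/2$ and split $\R^2$ into the ball $B=B(\bar x,2r)$ and its complement.

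On $B$, bound each term separately using \eqref{eq:DecK}:
\[
\int_B\bigl(|K(x^1-y)|+|K(x^2-y)|\bigr)|q(y)|\,dy\le C\|q\|_{L^\infty}\int_{B(x^i,3r)}\frac{dy}{|x^i-y|}\le C r\|q\|_{L^\infty}.
\]

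On $\R^2\setminus B$, note that $|x^1-y|$, $|x^2-y|$ and $|\bar x-y|$ are all comparable. Either the mean value theorem with \eqref{eq:DecGradK} or directly \eqref{eq:difK} gives
\[
|K(x^1-y)-K(x^2-y)|\le \frac{Cr}{|\bar x-y|^2}.
\]
Integrate this over two regions. For $2r\le|\bar x-y|\le 2$, the contribution is at most
\[
Cr\|q\|_{L^\infty}\int_{2r}^{2}\frac{d\rho}{\rho}\le Cr\|q\|_{L^\infty}\bigl(1+|\ln r|\bigr).
\]
For $|\bar x-y|>2$, the contribution is at most $Cr\|q\|_{L^1}$. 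Adding the pieces gives
\[
|v(x^1)-v(x^2)|\le C\|q_0\|_{L^1\cap L^\infty}\, r(1-\ln r)=C\|q_0\|_{L^1\cap L^\infty}\,\ell(r).
\]

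There is no genuine obstacle. The only point needing care is that the constants are uniform in $\varepsilon$, $\delta$ and $t$. Uniformity in $\varepsilon$ is exactly what Lemma~\ref{lemma:propietatsK} provides. Uniformity in $\delta$ and $t$ follows from the conservation of the $L^1$ and $L^\infty$ norms under the measure-preserving flow of the smooth solutions. The non-homogeneity of $K$ plays no role here, because only upper bounds on $K$ and on its differences are used, never cancellation.
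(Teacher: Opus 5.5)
Your proof is correct and follows the same route as the paper, which only says that the standard Euler log-Lipschitz argument (Majda--Bertozzi) carries over using the bounds of Lemma~\ref{lemma:propietatsK}. You have written that argument out in full: the case split $r\ge1$ versus $r<1$, and the near/far decomposition around $\bar x$ based on the $\varepsilon$-uniform estimates \eqref{eq:DecK} and \eqref{eq:difK}. You also correctly obtain uniformity in $\delta$ and $t$ from conservation of the $L^1$ and $L^\infty$ norms under mollification and under the volume-preserving flow.
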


The proof of this lemma, follows the standard argument for 2D Euler kernel and can be easily adapted to \eqref{def:QGSW} using the properties in Lemma \ref{lemma:propietatsK}.

We summarize the conclusions of the previous results into the following existence result. 
\begin{theorem}
    Let $q_0\in L_c^\infty(\R^2;\R).$ Then for all time there exists a weak solution $(v,q)$ to \eqref{def:QGSW} in the sense of Definition \ref{def:weakSol}.
\end{theorem}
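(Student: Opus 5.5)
We follow the two-step compactness scheme of \cite[Chapter 8]{MajdaBertozzi}. Throughout we use the uniform-in-$\delta$ bounds of the preceding proposition and Lemma~\ref{lemma:vlog-lip}.

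First, we mollify $q_0$ to obtain $q_0^\delta\in \Lipa_c$. We note that $\|q_0^\delta\|_{L^1\cap L^\infty}\le \|q_0\|_{L^1\cap L^\infty}$ and that $\operatorname{supp} q_0^\delta$ lies in a fixed compact set. Theorem~\ref{th:EUholder} then gives global smooth solutions $(v^\delta,q^\delta)$, with $q^\delta(\cdot,t)=q_0^\delta\circ X_\delta^{-1}(\cdot,t)$ and $X_\delta$ measure preserving. By part (i) of the proposition, the velocities satisfy $\Np{v^\delta}\le C\|q_0\|_{L^1\cap L^\infty}$ uniformly in $\delta$ and $t$. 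Hence the supports of $q^\delta(\cdot,t)$ stay in a ball of radius $R+Ct$, and $\Np{q^\delta(\cdot,t)}\le\Np{q_0}$ by transport.

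Second, we pass to the limit. By Lemma~\ref{lemma:vlog-lip}, the fields $v^\delta$ are uniformly Log-Lipschitz in $x$, and they are bounded. Osgood's lemma then gives that the flows $X_\delta$ and their inverses are equicontinuous in $(\alpha,t)$ with a uniform modulus $\ell$-type Hölder, namely $|X_\delta(\alpha_1,t)-X_\delta(\alpha_2,t)|\le C|\alpha_1-\alpha_2|^{e^{-ct}}$. Comparing two flows $X_\delta,X_{\delta'}$ via the integral equation \eqref{eq:ODEF} and using the Log-Lipschitz bound together with $\|q_0^\delta-q_0^{\delta'}\|_{L^1}\to0$ shows, again through Osgood, that $X_\delta$ is Cauchy locally uniformly on compact time intervals. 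This yields $q^\delta(\cdot,t)\to q(\cdot,t)=q_0\circ X^{-1}(\cdot,t)$ in $L^1$. The Cauchy property is the step needing the most care, and we get it from the $L^1$ continuity of $f\mapsto f\circ X^{-1}$ for measure-preserving maps. We then obtain $v^\delta=K*q^\delta\to K*q=v$ locally uniformly. For this we split $K$ into $K\chi_{|x|<1}\in L^1$ (with norm independent of $\varepsilon$ by \eqref{eq:DecK}) and $K\chi_{|x|\ge1}\in L^\infty$, and use the $L^1$ and $L^\infty$ bounds on $q^\delta-q$.

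Finally, for each fixed $\varphi\in C^1([0,T];C_0^1)$, the smooth solutions satisfy the identity (iii) of Definition~\ref{def:weakSol} exactly. The linear terms converge by the $L^1$ convergence of $q^\delta$. The nonlinear term $\int_0^T\!\int v^\delta\cdot\nabla\varphi\, q^\delta$ converges by writing it as $(v^\delta-v)q^\delta+v(q^\delta-q)$ and using the uniform bounds, dominated convergence in $t$, and the local uniform convergence on $\operatorname{supp}\varphi$. Since $T$ is arbitrary and all constants depend only on $\|q_0\|_{L^1\cap L^\infty}$ and $t$, and not on $\delta$ or on a blow-up time, the solution exists for all time. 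We regard the Osgood-type control of the flows, which substitutes for Lipschitz bounds, as the only nontrivial ingredient, and it follows verbatim from the Euler case once Lemma~\ref{lemma:propietatsK} is available.
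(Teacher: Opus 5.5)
Your route differs from the paper's in how you pass to the limit. The paper, following \cite[Chapter 8]{MajdaBertozzi}, uses the uniform Log-Lipschitz bound to get uniform H\"older equicontinuity of $X_\delta$ and $X_\delta^{-1}$. It then extracts a convergent subsequence by Arzel\`a--Ascoli and never compares two approximate flows; that comparison appears only later, in the uniqueness proof. You instead show that the whole family $X_\delta$ is Cauchy by an Osgood stability estimate. This gives convergence of the full family, with no subsequence or diagonal argument in $T$, and it anticipates the uniqueness argument. The price is that you need a quantitative estimate between two different flows, which the compactness route avoids.

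That estimate does not follow from what you cite. Write $\partial_t(X_\delta-X_{\delta'})$ via \eqref{eq:ODEF} and set $y=X_{\delta'}(\alpha,s)$. You get three terms:
the first is $v^\delta(X_\delta(\alpha,s))-v^\delta(y)$, handled by Lemma~\ref{lemma:vlog-lip};
the second is $\int K(y-X_{\delta'}(\beta,s))(q_0^\delta-q_0^{\delta'})(\beta)\,d\beta$, handled by $\|q_0^\delta-q_0^{\delta'}\|_{L^1}\to0$;
the third is
\[
\int_{\R^2}[K(y-X_\delta(\beta,s))-K(y-X_{\delta'}(\beta,s))]\,q_0^\delta(\beta)\,d\beta ,
\]
which neither tool controls. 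Bounding it through the Lipschitz norm of $q_0^\delta$ is not uniform in $\delta$. Switching to Eulerian variables does not help either: for merely bounded data, $\|q^\delta(s)-q^{\delta'}(s)\|_{L^1}$ has no rate in $\rho(s)=\sup_\beta|X_\delta(\beta,s)-X_{\delta'}(\beta,s)|$.

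What you need is the bound $C\|q_0\|_{L^1\cap L^\infty}\,\ell(\rho)$ for this term.
On $\{|y-X_\delta(\beta,s)|\le 2\rho\}$ you have $|y-X_{\delta'}(\beta,s)|\le 3\rho$. Bound each kernel separately by \eqref{eq:DecK} and change variables with the corresponding measure-preserving flow.
On the complement, $|y-X_{\delta'}(\beta,s)|\ge\frac12|y-X_\delta(\beta,s)|$, so \eqref{eq:difK} gives $C\rho\,|y-X_\delta(\beta,s)|^{-2}$. Changing variables with $X_\delta$ then yields $C\ell(\rho)$.
Alternatively, run the weighted estimate of the uniqueness section with Lemma~\ref{lemma:diffIntKernel}, then upgrade it using your equicontinuity. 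Osgood's lemma with a vanishing forcing term then closes the argument.

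Your sentence attributing the Cauchy property to the $L^1$ continuity of $f\mapsto f\circ X^{-1}$ is misplaced. That fact is what converts uniform convergence of $X_\delta^{-1}$ into $L^1$ convergence of $q^\delta$. The uniform convergence of $X_\delta^{-1}$ itself follows from the forward Cauchy property plus equicontinuity of the inverses.

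The velocity convergence also fails as written. With $g=q^\delta-q$, splitting at the fixed radius $1$ gives $\|(K\chi_{|x|<1})*g\|_{L^\infty}\le C\|g\|_{L^\infty}$. This is bounded but not small, since $q^\delta\to q$ only in $L^1$. Split at a radius $r$ instead, which gives $\|K*g\|_{L^\infty}\le C(r\|g\|_{L^\infty}+r^{-1}\|g\|_{L^1})$. Optimizing in $r$ gives $\|K*g\|_{L^\infty}\le C\|g\|_{L^1}^{1/2}\|g\|_{L^\infty}^{1/2}$, with $C$ independent of $\varepsilon$. The same bound controls the $q_0^\delta-q_0^{\delta'}$ term above. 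With these two repairs your argument is complete.
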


\subsection{Uniqueness of weak solutions} \label{subsec:unqWS}
For this part we follow \cite[Section 3]{ZinebHaroune}, {where the proof of uniqueness is based on the stability argument for flow maps introduced in \cite{CrippaStefani}}, although the computations are considerably simpler in our setting. We first prove uniqueness of Lagrangian weak solutions, that is, solutions given by the flow map defined in \eqref{eq:ODEF}, and then show that all possible solutions are of this kind, thus proving uniqueness.

Consider $(v,q)$ and $(\Tilde{v}, \Tilde{q})$ two Lagrangian solutions with the same initial data $q_0\in L_c^\infty(\R^2;\R).$ Let $X,\Tilde{X}$ be the unique flows associated to each solution. Then by \eqref{eq:ParticleTrajectories} we have 
\begin{align*}
    X-\Tilde{X}&=\int_0^t[v({X(x,s),s})-\Tilde{v}(\Tilde{X}(x,s),s)]ds \\ 
    &=\int_0^t [v({X(x,s),s})-v(\Tilde{X}(x,s),s)+(v-\Tilde{v})(\Tilde{X}(x,s),s)]ds
\end{align*}

Since $v$ is Log-Lipschitz (see Lemma \ref{lemma:vlog-lip}) we get 
\begin{equation*}
    |X-\Tilde{X}|\leq C \int_0^t \ell (X(x,s)-\Tilde{X}(x,s))ds+\int_0^t|(v-\Tilde{v})(\Tilde{X}(x,s),s)|ds.
\end{equation*}

We focus now on the second term of the previous identity. {For any fixed $s$ we have}
\begin{align*}
    |(v-\Tilde{v})(\Tilde{X}(x,s),s)|&=|(K*q-K*\Tilde{q})(\Tilde{X}(x,s))|\\
    &=\left|\int_{\R^2}K(\Tilde{X}(x,s)-y)q(y)dy-\int_{\R^2}K(\Tilde{X}(x,s)-y)\Tilde{q}(y)dy\right|\\
    &=\left|\int_{\R^2}K(\Tilde{X}(x,s)-X(y,s))q_0(y)dy -\int_{\R^2}K(\Tilde{X}(x,s)-\Tilde{X}(y,s))q_0(y)dy\right|
\end{align*}
where in the last equality we have used that $q$ is transported by the flow.

Putting together the preceding estimates, we obtain 
\begin{multline}\label{eq:difFlowsLag}
      |X-\Tilde{X}|\leq C \int_0^t \ell (X(x,s)-\Tilde{X}(x,s))ds\\+\int_0^t\int_{\R^2}|K(\Tilde{X}(x,s)-X(y,s)) -K(\Tilde{X}(x,s)-\Tilde{X}(y,s))||q_0(y)|dyds
\end{multline}

We now introduce the following Lemma, which is in essence Lemma 2.3 from \cite{ZinebHaroune}, and which will be of great use.  
\begin{lemma}\label{lemma:diffIntKernel}
    For any $f\in L^1(\R^2;\R)\cap L^\infty(\R^2;\R),$ and $K$ the kernel defined in \eqref{def:kernel}, the following holds \begin{enumerate}[(i)]
        \item $\Np{K*f}\leq C \|f\|_{L^1\cap L^\infty}$
        \item For any $x,y\in\R^2$ with $x\neq y$ we have that $$\int_{\R^2}|K(x-z)-K(y-z)||f(z)|dz\leq C \|f\|_{L^1\cap L^\infty} \ell (|x-y|).$$
    \end{enumerate}
\end{lemma}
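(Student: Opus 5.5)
Both parts follow from the pointwise kernel bounds of Lemma~\ref{lemma:propietatsK}, namely \eqref{eq:DecK} and \eqref{eq:difK}, whose constants do not depend on $\varepsilon$. We then apply the classical splitting argument used for the Biot--Savart kernel (as in \cite[Lemma 2.3]{ZinebHaroune} and \cite[Chapter 8]{MajdaBertozzi}).

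For (i), we use $|K(x-z)|\le C|x-z|^{-1}$ and split the integral $\int |x-z|^{-1}|f(z)|\,dz$ into two regions:
\[
\int_{|x-z|\le 1}|x-z|^{-1}|f(z)|\,dz\le 2\pi\Np{f}, \qquad \int_{|x-z|>1}|x-z|^{-1}|f(z)|\,dz\le \|f\|_{L^1}.
\]
Adding the two gives $\Np{K*f}\le C\|f\|_{L^1\cap L^\infty}$ with $C$ uniform in $x$.

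For (ii), set $d=|x-y|$. First suppose $d\ge 1$. Then $\ell(d)=d\ge 1$, and the crude bound $|K(x-z)-K(y-z)|\le |K(x-z)|+|K(y-z)|$ together with the estimate from (i) gives the result directly.

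Now suppose $d<1$. We split $\R^2$ into three regions:
\[
A=\{|x-z|<2d\},\qquad B=\{2d\le |x-z|<2\},\qquad D=\{|x-z|\ge 2\}.
\]
On $A$ we again use the crude bound. Since $A\subset\{|y-z|<3d\}$, integrating $C|x-z|^{-1}+C|y-z|^{-1}$ against $\Np{f}$ gives at most $C\Np{f}\,d$. On $B\cup D$ we have $|y-z|\ge |x-z|-d\ge \tfrac12|x-z|$, so \eqref{eq:difK} yields
\[
|K(x-z)-K(y-z)|\le \frac{Cd}{|x-z|^2}.
\]
Integrating this over $B$ against $\Np{f}$ gives
\[
C\Np{f}\,d\int_{2d}^{2}\frac{dr}{r}=C\Np{f}\,d\,\ln\frac1d+C\Np{f}\,d .
\]
Integrating over $D$, where $|x-z|^{-2}\le \tfrac14$, gives at most $C d\,\|f\|_{L^1}$. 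Summing the three contributions yields $C\|f\|_{L^1\cap L^\infty}\,d\,(1-\ln d)=C\|f\|_{L^1\cap L^\infty}\,\ell(d)$.

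The only delicate step is the comparison $|y-z|\gtrsim|x-z|$ needed to apply \eqref{eq:difK} on $B\cup D$. It is guaranteed by choosing the inner radius $2d$, and the logarithm comes solely from region $B$. The non-homogeneity of $K$ causes no difficulty here, since only the $\varepsilon$-independent bounds \eqref{eq:DecK} and \eqref{eq:difK} are used.
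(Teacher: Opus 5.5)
Your proof is correct: the case $d\ge 1$, the three-region splitting with inner radius $2d$, and the comparison $|y-z|\ge \tfrac12|x-z|$ used to apply \eqref{eq:difK} all check out, and together they give $C\|f\|_{L^1\cap L^\infty}\,\ell(d)$. The paper gives no proof of its own and defers to \cite[Lemma 2.3]{ZinebHaroune}, which is exactly this standard argument built on the $\varepsilon$-independent bounds of Lemma~\ref{lemma:propietatsK}, so your approach matches it.
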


Let {$\alpha$ be a function in $L^1(\R^2;\R)\cap L^\infty(\R^2;\R)$ with $\alpha>0$}. We also introduce the density $\eta=|q_0|+\alpha$.

We proceed by integrating \eqref{eq:difFlowsLag} against the measure $\eta(x)dx$ and {applying} Fubini's theorem to obtain that 
\begin{multline*}
      \int_{\R^2}|X(x,t)-\Tilde{X}(x,t)|\eta(x)dx\leq C \int_0^t \int_{\R^2}\ell (X(x,s)-\Tilde{X}(x,s))\eta(x)dxds\\+\int_0^t\int_{\R^2}\int_{\R^2}|K(\Tilde{X}(x,s)-X(y,s)) -K(\Tilde{X}(x,s)-\Tilde{X}(y,s))||q_0(y)|dy\eta(x)dxds \\
      =C \int_0^t \int_{\R^2}\ell (X(x,s)-\Tilde{X}(x,s))\eta(x)dxds\\+\int_0^t\int_{\R^2}|q_0(y)|\int_{\R^2}|K(\Tilde{X}(x,s)-X(y,s)) -K(\Tilde{X}(x,s)-\Tilde{X}(y,s))|\eta(x)dxdyds
\end{multline*}

Notice now that the last integral can be bounded using Lemma \ref{lemma:diffIntKernel} (ii), so one gets 
\begin{align*}
      \int_{\R^2}|X(x,t)-\Tilde{X}(x,t)|\eta(x)dx&\leq C \int_0^t \int_{\R^2}\ell (X(x,s)-\Tilde{X}(x,s))\eta(x)dxds\\&+\int_0^t\int_{\R^2}|q_0(y)|\|\eta\|_{L^1\cap L^\infty}\ell (|X(y,s)-\Tilde{X}(y,s)|)dyds\\&\leq C(1+\Np{q}\|\eta\|_{L^1\cap L^\infty}) \int_0^t \int_{\R^2}\ell (X(x,s)-\Tilde{X}(x,s))\eta(x)dxds.
\end{align*}

To conclude, since $\ell$ is concave and $\eta\in L^1,$ we end up with $$\int_{\R^2}|X(x,t)-\Tilde{X}(x,t)|\eta(x)dx\leq C \int_0^t\ell \left(\int_{\R^2} |X(x,t)-\Tilde{X}(x,t)|\eta(x)dx\right)ds,$$ so the uniqueness of the flow-maps follows due to Osgood's Lemma, see, for instance, \cite[Lemma~3.4]{bahouri2011fourier}.

Finally, we prove that all solutions are Lagrangian. {Let $(v,q)$ be a solution with initial data $q_0$.} Define $X$ the flow map associated with the velocity field $v$ as the unique solution of \eqref{eq:ParticleTrajectories}. {Uniqueness of the flow map follows from the log-lipschitz regularity of $v$} (see Lemma \ref{lemma:vlog-lip}). If we now define $Q=q_0(X^{-1}(\cdot,t)),$ it is clear that $Q$ satisfies the same transport equation as $q$ with velocity $v$. Therefore, by classical results on transport equations (see, for instance, \cite{diperna1989ordinary}) we deduce that $Q = q,$ so $q$ is a Lagrangian solution. 

We have thus proved the following result.
\begin{theorem}\label{th:yudovichQGSW}
    Let $q_0\in L_c^\infty(\R^2;\R).$ Then for all time there exists a unique weak solution $(v,q)$ to \eqref{def:QGSW} in the sense of Definition \ref{def:weakSol}.
\end{theorem}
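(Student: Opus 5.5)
The plan is to assemble the two halves developed in this section: existence by regularization and compactness, and uniqueness by a Lagrangian stability estimate closed with Osgood's lemma.

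\textbf{Existence.} For existence I mollify the initial datum, $q_0^\delta=\rho_\delta*q_0\in\Lipa_c$, and apply Theorem~\ref{th:EUholder} to get global smooth pairs $(v^\delta,q^\delta)$. Since $q^\delta$ is transported by a measure-preserving flow, $\|q^\delta(\cdot,t)\|_{L^1\cap L^\infty}\le\|q_0\|_{L^1\cap L^\infty}$. Lemma~\ref{lemma:diffIntKernel}(i) then gives uniform bounds on $v^\delta$, and Lemma~\ref{lemma:vlog-lip} gives a uniform log-Lipschitz modulus in $x$. The equation $\partial_t X^\delta=v^\delta(X^\delta)$ gives equicontinuity in $t$ of the flows $X^\delta$, and the log-Lipschitz bound gives a uniform modulus of continuity in $x$ (via Osgood). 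By Arzel\`a--Ascoli I extract $X^{\delta'}\to X$ locally uniformly, and then $q^{\delta'}(\cdot,t)=q_0^{\delta'}\circ (X^{\delta'})^{-1}\to q(\cdot,t)$ in $L^1$. Convergence $v^{\delta'}\to K*q$ locally uniformly follows from $K\in L^1_{loc}$ with $|K|\le C/|x|$ (Lemma~\ref{lemma:propietatsK}) and the exponential decay of $K_1$ at infinity. These are the two parts of the Proposition. Passing to the limit in the weak formulation, the only nonlinear term is $\int v^{\delta}\cdot\nabla\varphi\, q^{\delta}$, which converges because $v^{\delta'}\to v$ locally uniformly while $q^{\delta'}\to q$ in $L^1$ with uniform $L^\infty$ bounds.

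\textbf{Uniqueness.} For uniqueness I first treat Lagrangian solutions. Given two flows $X,\tilde X$ with the same $q_0$, I use \eqref{eq:difFlowsLag} and integrate against the weight $\eta=|q_0|+\alpha$, with $\alpha>0$ in $L^1\cap L^\infty$ chosen so that $\eta$ charges all of $\R^2$. Fubini and Lemma~\ref{lemma:diffIntKernel}(ii) bound the nonlocal term by $\|\eta\|_{L^1\cap L^\infty}\int|q_0(y)|\,\ell(|X(y,s)-\tilde X(y,s)|)\,dy$, and this is dominated by the $\eta$-weighted integral since $|q_0|\le\eta$. Concavity of $\ell$ together with Jensen's inequality (after normalizing $\eta$) gives
\[
D(t)\le C\int_0^t \ell(D(s))\,ds,\qquad D(t)=\int_{\R^2}|X-\tilde X|\,\eta\,dx .
\]
Since $\int_0 dr/\ell(r)=\infty$, Osgood's lemma forces $D\equiv0$, so $X=\tilde X$ a.e. and hence everywhere by continuity.

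\textbf{All weak solutions are Lagrangian.} Given any weak solution $(v,q)$, $v=K*q$ is log-Lipschitz because $q\in L^\infty_tL^1\cap L^\infty$ and Lemma~\ref{lemma:diffIntKernel} applies, so the flow $X$ of $v$ exists and is unique. Then $Q=q_0\circ X^{-1}$ solves the same linear transport equation with the divergence-free, $W^{1,p}_{loc}$ velocity $v$. The DiPerna--Lions uniqueness theory for linear transport gives $q=Q$.

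\textbf{Main obstacle.} I expect the delicate point to be this last step, i.e.\ verifying that $v$ has enough regularity, namely $\nabla v\in L^p_{loc}$. This comes from the splitting $\nabla K=\SU+\SD+c\delta_0$: $\SU$ is Calder\'on--Zygmund, hence bounded on $L^p$ for $1<p<\infty$, and $\SD\in L^1$. The second delicate point is the uniformity of the constants in Lemma~\ref{lemma:diffIntKernel} despite the non-homogeneity of $K$. Both reduce to the Bessel bounds \eqref{eq:controlBessel} and \eqref{eq:IntdefBessel} already established.
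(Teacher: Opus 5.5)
Your proposal is correct and follows essentially the same route as the paper. Existence comes from mollified data and Majda--Bertozzi-type compactness of the flows; uniqueness of flows comes from the Lagrangian stability estimate weighted by $\eta=|q_0|+\alpha$, Lemma~\ref{lemma:diffIntKernel}(ii), concavity of $\ell$ and Osgood's lemma; and DiPerna--Lions shows every weak solution is Lagrangian. Two details you make explicit are only left to the cited references in the paper: the justification of $\nabla v\in L^p_{loc}$ via $\SU+\SD+c\delta_0$, and the convergence of the inverse flows $(X^{\delta'})^{-1}$, which follows from the same Arzel\`a--Ascoli argument applied to the backward flow.
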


\section{Vortex patches}\label{sec:patch}
    Weak solutions introduced in the previous section have well-defined particle trajectories along which the vorticity is preserved. If we start at some time with a vorticity field that is constant inside some region, the region evolves in time keeping the same constant vorticity inside. This fact leads to the natural class of weak solutions known as vortex patches, previously introduced.

    If the boundary of the patch is sufficiently smooth, then the velocity field induced by the patch can be completely determined by the position of the boundary. If the boundary is at least piecewise $C^1,$ then the velocity induced by the patch at time $t$, $\Omega_t$, is given by   $$v=-\frac{1}{2\pi}\int_{\Omega_t} \nabla^\perp K_0(\varepsilon |x-x'|)dx',$$
    thus applying Green's formula yields
    $$v=\frac{1}{2\pi}\int_{\partial \Omega_t} K_0(\varepsilon |x-x'|) [-n_2(x'),n_1(x')]^tdx'.$$
    
    Therefore, the motion of the boundary of the patch satisfies the contour dynamics equation (CDE):    
	\begin{equation*}\label{def:CDE}\tag{CDE}
		\left\{\begin{aligned}
			&\frac{\partial z}{\partial t}(\alpha,t)=\frac{1}{2\pi}{\int_{S^1}} K_0(\varepsilon |z(\alpha,t)-z(\alpha',t)|)z_\alpha (\alpha',t)d\alpha'\\ 
			&z(\alpha,t)|_{t=0}=z_0(\alpha)
		\end{aligned}\right.
	\end{equation*}

   where $z(\alpha,t)$ is a Lagrangian parametrization of the curve at time $t$, that is $z(\alpha,t)={X(z(\alpha,0),t)},$ where $X$ is the particle trajectory map for the flow.

   {Notice that taking \(\varepsilon \to 0\) in the above expression does not directly yield the Euler contour dynamics equation. However, from the derivation above using Green’s formula, it is clear that subtracting a constant from the kernel does not affect the integral over a closed curve. Hence, one can replace $K_0$ by $$\Tilde{K_0} \coloneqq K_0 - \ln \frac{\varepsilon}{2}$$ in \eqref{def:CDE} to recover the Euler equation in the limit case. }
   
\subsection{Local existence and uniqueness of solutions to the CDE}
    We will apply Picard's Theorem (Theorem \ref{th:Picard}) to the study of solutions to the \eqref{def:CDE}. The Banach space we will consider is $\LipUa(S^1; \R^2)$ and as an open set we define for any $M>0$ the set
	$$O^M= \left\{z(\cdot,t)\in\LipUa \;|\; \frac{1}{M}<\frac{|z(\alpha,t)-z(\alpha',t)|}{|\alpha-\alpha'|}<M\right\}$$
	which is nonempty and consists only of one-to-one mappings of $S^1$ to $\R^2.$
	
	\begin{prop}\label{prop:CDEpicard}
		Let $F:O^M\to \LipUa(S^1; \R^2)$ be defined by 
		$$F(z(\alpha,{t}))\coloneqq \int_{S^1} K_0(\varepsilon |z(\alpha,t)-z(\alpha',t)|)z_\alpha (\alpha',t)d\alpha'.$$
		Then $F$ is a locally Lipschitz continuous operator from $O^M$ to $\LipUa(S^1; \R^2)$.
	\end{prop}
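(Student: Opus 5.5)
The plan is to separate the logarithmic singularity of $K_0$ from a smoother remainder. The Euler part can then be handled by the classical contour-dynamics theory, and the remainder by direct (non-singular) estimates.

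By \eqref{def:besselK0}, for $r>0$ we can write
\begin{equation*}
K_0(\varepsilon r) = -\ln r - \ln\frac{\varepsilon}{2} + G_\varepsilon(r),\qquad G_\varepsilon(r)\coloneqq -\ln\Big(\frac{\varepsilon r}{2}\Big)\big(I_0(\varepsilon r)-1\big)+\sum_{m\geq 0}\frac{(\varepsilon r/2)^{2m}}{(m!)^2}\psi(m+1).
\end{equation*}
Accordingly $F=F_E+F_C+F_R$.
\begin{itemize}
\item The term $F_C$ comes from the constant $-\ln(\varepsilon/2)$. It vanishes identically, because $\int_{S^1} z_\alpha(\alpha',t)\,d\alpha'=0$ for a closed curve (as already noted for $\tilde K_0$).
\item The term $F_E(z)=-\int_{S^1}\ln|z(\alpha)-z(\alpha')|\,z_\alpha(\alpha')\,d\alpha'$ is exactly the Euler contour-dynamics operator. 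Its local Lipschitz continuity on $O^M$ in $\LipUa(S^1;\R^2)$ is the classical result of \cite[Chapter 8]{MajdaBertozzi}, which I would quote.
\item Everything therefore reduces to the remainder $F_R(z)(\alpha)=\int_{S^1}\mathcal G(z(\alpha)-z(\alpha'))\,z_\alpha(\alpha')\,d\alpha'$, where $\mathcal G(x)\coloneqq G_\varepsilon(|x|)$.
\end{itemize}

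Since $I_0(s)-1=O(s^2)$ is even and analytic, $\mathcal G$ is a smooth function of $|x|^2$ plus a term of the form $|x|^2\ln|x|\cdot(\text{smooth in }|x|^2)$. On the bounded set $\{|x|\le 2M\pi\}$, which contains all differences $z(\alpha)-z(\alpha')$ for $z\in O^M$, this gives
\begin{equation*}
|\mathcal G|+|\nabla\mathcal G|\le C,\qquad |D^2\mathcal G(x)|\le C(1+|\ln|x||),\qquad |D^3\mathcal G(x)|\le \frac{C}{|x|}.
\end{equation*}
In particular $\nabla\mathcal G$ is $\gamma$-Hölder. With this, I would show that $F_R$ is $C^1$ (Fréchet) from $O^M$ to $\LipUa$ with bounded derivative, and then use the mean value theorem exactly as in Section~\ref{sec:regular}. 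The derivative is
\begin{equation*}
F_R'(z)w=\int_{S^1}\nabla\mathcal G(z(\alpha)-z(\alpha'))\,(w(\alpha)-w(\alpha'))\,z_\alpha(\alpha')\,d\alpha'+\int_{S^1}\mathcal G(z(\alpha)-z(\alpha'))\,w_\alpha(\alpha')\,d\alpha'.
\end{equation*}
Its sup norm is immediately bounded by $C\NUa{w}$, using $\NUa{z}<M$.

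Next I would differentiate in $\alpha$ and check Hölder continuity of the result. Differentiating produces terms of three kinds:
\begin{itemize}
\item $\nabla\mathcal G(\cdot)\,z_\alpha(\alpha)$, which is Hölder since both factors are;
\item $\nabla\mathcal G(\cdot)\,w_\alpha(\alpha)$, treated the same way;
\item $D^2\mathcal G(z(\alpha)-z(\alpha'))\,z_\alpha(\alpha)\,(w(\alpha)-w(\alpha'))$.
\end{itemize}
In the third term the factor $|w(\alpha)-w(\alpha')|\le \NUa{w}|\alpha-\alpha'|$ tames the logarithm. For its $\gamma$-Hölder seminorm I would use the splitting from the estimate of $I_1$ in Lemma~\ref{lemma:G(x)Y}. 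Given $\alpha_1,\alpha_2$, integrate separately over the arc $|\alpha'-\alpha_1|\le 3|\alpha_1-\alpha_2|$ and its complement. On the near arc, the integrand is $O(r|\ln r|)$ for both points, which yields $|\alpha_1-\alpha_2|^{2}|\ln|\alpha_1-\alpha_2||\lesssim|\alpha_1-\alpha_2|^\gamma$. On the far arc, the mean value theorem with $|D^3\mathcal G|\,|w-w'|\le C$ bounds the contribution by $C|\alpha_1-\alpha_2|\,|\ln|\alpha_1-\alpha_2||\lesssim |\alpha_1-\alpha_2|^\gamma$.

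Here the lower bound $|z(\alpha)-z(\alpha')|\ge |\alpha-\alpha'|/M$ from the definition of $O^M$ is what converts bounds in $|z(\alpha)-z(\alpha')|$ into bounds in $|\alpha-\alpha'|$. Continuity of $z\mapsto F_R'(z)$ follows from the same estimates applied to differences, so $F_R$ is locally Lipschitz and hence so is $F=F_E+F_R$. Boundedness of $F$ on $O^M$ follows from the same computations with $w=z$.

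The main obstacle I expect is the third-kind term above, where the log-singular $D^2\mathcal G$ meets the Hölder difference quotient. The first thing I would try is the near/far splitting just described, keeping track that the constants depend on $M$ and, through $G_\varepsilon$, possibly on $\varepsilon$. That dependence is harmless for this proposition.
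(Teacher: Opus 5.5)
Your proposal is correct and follows essentially the paper's route. The series expansion of $K_0$ splits $F$ into three pieces: the Euler contour-dynamics operator (quoted from Majda--Bertozzi), the constant $-\ln(\varepsilon/2)$ term (which integrates to zero against $z_\alpha$, the same observation behind $\tilde K_0$), and a remainder with only an $|x|^2\ln|x|$-type singularity. The remainder's Gateaux derivative is bounded in $\LipUa$ by differentiating under the integral sign and a near/far splitting, which is the mechanism the paper packages as Lemma~\ref{lemma:nuclisBons}, and the mean value theorem then gives local Lipschitz continuity.

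The differences are only organizational: you quote the Euler part's local Lipschitz property wholesale instead of re-splitting $F'(z)y$ into $G_{11}$, $G_{12}$, $G_2$. One small omission: in your far-arc step, also record the log-singular term $D^2\mathcal G\,w_\alpha(\tilde\alpha)$ produced by the mean value theorem, which contributes only $O(|\alpha_1-\alpha_2|)$.
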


    In order to prove the result we will use the following lemma:
    

    
    {
    \begin{lemma}\label{lemma:nuclisBons}
        Let $K:S^1\times S^1\setminus\{\alpha=\alpha'\}\to\R$ be a kernel such that
        \begin{equation}\label{eq:kernelAssumptions}
        \sup_{\alpha\in S^1}\int_{S^1} |K(\alpha,\alpha')|\,d\alpha' < \infty,
        \qquad
        \sup_{\alpha\in S^1}\int_{S^1} |\partial_\alpha K(\alpha,\alpha')|\,d\alpha' < \infty.
        \end{equation}
        For $f\in L^\infty(S^1)$ define
        \[
        Tf(\alpha)\coloneqq \int_{S^1} K(\alpha,\alpha')f(\alpha')\,d\alpha'.
        \]
        Then $Tf$ is Lipschitz on $S^1$, i.e.  $Tf\in \operatorname{Lip}(S^1)$, and there exists a constant $C>0$, depending only on the bounds in
        \eqref{eq:kernelAssumptions}, such that
        $$\|Tf\|_{L^\infty}+\Na[1]{Tf} \le C \|f\|_{L^\infty},$$ where $\Na[1]{\cdot}$ denotes the Lipschitz norm (that is Definition \ref{def:holder} with $\gamma=1$).
    \end{lemma}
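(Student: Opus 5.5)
The plan is to prove the two halves of the estimate separately, directly from the definition of $Tf$. The sup bound is immediate: for every $\alpha\in S^1$,
\[
|Tf(\alpha)|\le \|f\|_{L^\infty}\sup_{\alpha\in S^1}\int_{S^1}|K(\alpha,\alpha')|\,d\alpha',
\]
so $\|Tf\|_{L^\infty}\le C_0\|f\|_{L^\infty}$, where $C_0$ is the first constant in \eqref{eq:kernelAssumptions}.

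For the Lipschitz seminorm, fix $\alpha_1,\alpha_2\in S^1$ and let $I$ be the shorter arc joining them, with $|I|=|\alpha_1-\alpha_2|$. I will interpret the hypothesis in the natural way: for each fixed $\alpha'$, the map $\alpha\mapsto K(\alpha,\alpha')$ is absolutely continuous on $S^1\setminus\{\alpha'\}$. This is the situation in all intended applications, where the kernels are smooth off the diagonal, or have at most a logarithmic singularity as for $K_0$. Then, for a.e. $\alpha'\notin I$, the fundamental theorem of calculus gives
\[
K(\alpha_1,\alpha')-K(\alpha_2,\alpha')=\int_I \partial_\alpha K(s,\alpha')\,ds .
\]
Multiplying by $f(\alpha')$, integrating in $\alpha'$, and applying Tonelli/Fubini yields
\[
|Tf(\alpha_1)-Tf(\alpha_2)|\le \|f\|_{L^\infty}\int_I\int_{S^1}|\partial_\alpha K(s,\alpha')|\,d\alpha'\,ds\le C_1\|f\|_{L^\infty}|\alpha_1-\alpha_2| .
\]

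The main obstacle is the diagonal. When $\alpha'\in I$, the segment from $\alpha_1$ to $\alpha_2$ crosses the singularity $s=\alpha'$, so the identity above may pick up a jump there. I plan to handle this in one of two ways.

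The first option is to show the identity still holds across $s=\alpha'$. This works whenever $K(\cdot,\alpha')$ is absolutely continuous on the whole circle, which is the case for a logarithmic singularity, since its derivative is locally integrable and there is no jump. The integrability of $\partial_\alpha K$ in $\alpha'$, uniformly in $\alpha$, suggests this is the intended reading. Under it, the formula is valid for all $\alpha'$ and the Fubini step above goes through verbatim.

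The second option, for a truly discontinuous kernel, is to split the $\alpha'$-integral into $\alpha'\in 2I$ and $\alpha'\notin 2I$. On $2I$ I would control $\int_{2I}|K(\alpha_j,\alpha')|\,d\alpha'$ directly. That bound, however, is only $o(1)$ in general, not $O(|I|)$.

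I will therefore commit to the first option: state explicitly that $\partial_\alpha K$ is the distributional derivative in $\alpha$, which is locally integrable and has no singular part on the diagonal. With that, the identity holds for all $\alpha'$ and the lemma follows with $C=C_0+C_1$.
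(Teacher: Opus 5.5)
Under the hypothesis you make explicit (absolute continuity of $K(\cdot,\alpha')$ on all of $S^1$), your argument is correct. It takes a different route from the paper.

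The paper splits the $\alpha'$-integral at $|\alpha_1-\alpha'|=3d$. On the near part it bounds $|I_1|,|I_2|\le 6d\,\|f\|_{L^\infty}\sup_{\alpha,\alpha'}|K(\alpha,\alpha')|$. That is, it silently assumes $K$ is bounded, which is exactly the $O(|I|)$ control you noticed the two integral bounds do not give. On the far part it uses the mean value inequality with an intermediate point $\tilde\alpha$ that depends on $\alpha'$. The second bound in \eqref{eq:kernelAssumptions} then does not literally apply, and your FTC--Tonelli averaging over the arc $I$ is what makes that step rigorous.

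Your caution about the diagonal is justified, because some extra hypothesis is unavoidable. Take $\alpha'\in(-\pi,\pi]$, let $\sigma$ be the $2\pi$-periodic sawtooth equal to $\theta/(2\pi)$ on $(0,2\pi)$, and set $K(\alpha,\alpha')=|\alpha'|^{-1/2}\sigma(\alpha-\alpha')$, with $K(\cdot,0)=0$. Off the diagonal $\partial_\alpha K=|\alpha'|^{-1/2}/(2\pi)$, so both bounds hold. Yet for $f\equiv 1$ and small $\alpha>0$ one gets $Tf(\alpha)-Tf(0)=2\alpha/\sqrt{\pi}-2\sqrt{\alpha}$, which is not Lipschitz at $0$.

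The two repairs buy different things:
\begin{itemize}
\item The paper's assumption (bounded $K$) tolerates a bounded jump of $K(\cdot,\alpha')$ at $\alpha=\alpha'$.
\item Your assumption tolerates unbounded $K$ but no jump, and it gives $C=C_0+C_1$ with no splitting at all.
\end{itemize}
Both cover the kernels used in the proof of Proposition~\ref{prop:CDEpicard}, which vanish on the diagonal like $|\alpha-\alpha'|\ln|\alpha-\alpha'|$.

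One correction to a side remark: a genuine logarithmic singularity such as $\log|\alpha-\alpha'|$ is not absolutely continuous across the diagonal. Its derivative $(\alpha-\alpha')^{-1}$ is not locally integrable, and it already violates the second bound in \eqref{eq:kernelAssumptions}. What your first option needs, and what actually occurs in the applications, is a logarithmic singularity in $\partial_\alpha K$, not in $K$.
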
}

    \begin{proof}
        By the first assumption in \eqref{eq:kernelAssumptions}, for any $\alpha\in S^1$ we have
        \[
        |Tf(\alpha)| \le \|f\|_{L^\infty}\int_{S^1} |K(\alpha,\alpha')|\,d\alpha'
        \le C\|f\|_{L^\infty},
        \]
        and therefore $\|Tf\|_{L^\infty}\le C\|f\|_{L^\infty}$.
        
        To estimate the Lipschitz seminorm, let $\alpha_1,\alpha_2\in S^1$ and set
        $d\coloneqq |\alpha_1-\alpha_2|$. We decompose
        \begin{equation*}
        \begin{aligned}
        Tf(\alpha_1)-Tf(\alpha_2)
        &=\int_{|\alpha_1-\alpha'|<3d} K(\alpha_1,\alpha')f(\alpha')\,d\alpha' -\int_{|\alpha_1-\alpha'|<3d} K(\alpha_2,\alpha')f(\alpha')\,d\alpha' \\
        &\quad +\int_{|\alpha_1-\alpha'|\ge 3d}
        \big(K(\alpha_1,\alpha')-K(\alpha_2,\alpha')\big)f(\alpha')\,d\alpha' \eqqcolon I_1+I_2+I_3.
        \end{aligned}
        \end{equation*}
        
        Using the boundedness of $K$, we obtain
        \[
        |I_1| \le \|f\|_{L^\infty}\sup_{\alpha,\alpha'}|K(\alpha,\alpha')|
        \int_{|\alpha_1-\alpha'|<3d} d\alpha'
        \le C d\,\|f\|_{L^\infty},
        \]
        and the same bound holds for $I_2$.
        
        For $I_3$, by the Mean Value Inequality there exists $\tilde\alpha$ between $\alpha_1$ and
        $\alpha_2$ such that
        \[
        |K(\alpha_1,\alpha')-K(\alpha_2,\alpha')|
        \le d\,|\partial_\alpha K(\tilde\alpha,\alpha')|.
        \]
        Therefore,
        \[
        |I_3|
        \le d\,\|f\|_{L^\infty}
        \int_{|\alpha_1-\alpha'|\ge 3d} |\partial_\alpha K(\tilde\alpha,\alpha')|\,d\alpha'
        \le C d\,\|f\|_{L^\infty},
        \]
        using the second assumption in \eqref{eq:kernelAssumptions}.
        
        Combining the above estimates yields
        \[
        |Tf(\alpha_1)-Tf(\alpha_2)| \le C d\,\|f\|_{L^\infty},
        \]
        which shows that $Tf\in \operatorname{Lip}(S^1)$ and completes the proof.
    \end{proof}   

    We are now ready to prove that $F$ satisfies Picard's theorem conditions.
    
	\begin{proof}[Proof of Proposition \ref{prop:CDEpicard}] 
		We will use the series expansions of the kernel, $K_0$, and its derivative, $-K_1$, which we rewrite here:
		\begin{align}\label{eq:reDefiKernels}
        \begin{split}    
			K_0(r)=-\log\left(\frac{r}{2}\right)-\log\left(\frac{r}{2}\right)h_1(r)+h_2(r),\\ 
			K_1(r)=\frac{1}{r}+\frac{r}{2}\ln\left(\frac{r}{2}\right)g_1(r)-\frac{r}{4}g_2(r),
            \end{split}
		\end{align}
		where $h_1, h_2, g_1, g_2$ are analytic functions given by   
		$$h_1(r)\coloneqq \sum_{m=1}^\infty \frac{\left(\frac{r}{2}\right)^{2m}}{(m!)^2}, \quad h_2(r)\coloneqq \sum_{m=0}^\infty\frac{\left(\frac{r}{2}\right)^{2m}}{(m!)^2}\psi(m+1),$$
		$$g_1(r)\coloneqq \sum_{m=0}^\infty \frac{\left(\frac{r}{2}\right)^{2m}}{m!(m+1)!},\quad g_2(r)\coloneqq \sum_{k=0}^\infty (\psi (k+1) + \psi(k+2))\frac{\left(\frac{r}{2}\right)^{2k}}{k!(k+1)!},$$    
		being $ \psi(1)=-\gamma, \; \psi(m+1)=\sum_{k=1}^m\frac{1}{k}-\gamma,$ being $\gamma$ Euler's constant.
		
		We see first that $F$ is a bounded operator from $O^M$ to $\LipUa(S^1; \R^2)$. To do so we start by bounding $\Na{F}$ (we do not really need to bound the $\gamma$ seminorm, but the computations will be useful later).
		
		Using the kernel series expression, and omitting the time dependence to ease the notation, we have 
		\begin{equation}\label{eq:decomposicioF}
			\begin{aligned}
				F(z(\alpha))&= \int_0^{2\pi} K_0(\varepsilon |z(\alpha)-z(\alpha')|) z_\alpha (\alpha') d\alpha' \\
				&= -\int_0^{2\pi} \left(\log|z(\alpha)-z(\alpha' )|+\ln{\frac\varepsilon2}\right)z_\alpha (\alpha' ) d\alpha'\\&-
				\int_0^{2\pi}\left(\log|z(\alpha)-z(\alpha' )|+\ln{\frac\varepsilon2}\right)h_1(\varepsilon|z(\alpha )-z(\alpha' )|)z_\alpha (\alpha' ) d\alpha'\\&+ \int_0^{2\pi} h_2(\varepsilon|z(\alpha )-z(\alpha' )|) z_\alpha (\alpha' ) d\alpha'\eqqcolon I_1+ I_2+ I_3
			\end{aligned}        
		\end{equation}
		
		Notice that $I_1$ corresponds to the CDE operator of Euler's equation ({and coincides exactly with it when one considers} $\Tilde{K_0}$ instead of $K_0$). Therefore, by \cite[Lemma 8.10]{MajdaBertozzi} we have that for $z\in O^M,$ $$\Na{I_1}\leq C_{\gamma,M} (\Np{z_\alpha} +1 ), \; \Na{\nabla_\alpha I_1}\leq C(\Na{z}+1).$$
		
		For $I_2$ we have 
		\begin{equation*}
			\begin{aligned}
				I_2&= - \int_0^{2\pi}\log|z(\alpha)-z(\alpha' )|h_1(\varepsilon|z(\alpha )-z(\alpha' )|)z_\alpha (\alpha' ) d\alpha' \\&- \ln{\frac\varepsilon2} \int_0^{2\pi}h_1(\varepsilon|z(\alpha )-z(\alpha' )|)z_\alpha (\alpha' ) d\alpha'\coloneqq I_{21} + I_{22}.
			\end{aligned}
		\end{equation*}   

        {Both $I_{21}$ and $I_{22}$ are of the form $Tf(\alpha)=\int_0^{2\pi} \mathcal{K}(\alpha,\alpha')z_\alpha(\alpha') d\alpha',$ where the kernel satisfies the conditions of Lemma \ref{lemma:nuclisBons}, therefore $\Na{I_{2}}\leq C_\varepsilon \Np{z_\alpha}.$ The term $I_3$ can be treated in the same way with $\mathcal{K}(\alpha,\alpha')=h_2(\varepsilon|z(\alpha)-z(\alpha')|).$ Moreover, one can see that, in both cases, $C_\varepsilon\to 0$ as $\varepsilon\to 0$} 

        Thus, combining the above estimates one gets $$\Na{F}\leq C (\Np{z_\alpha}+1).$$

        We now bound $\Na{\nabla_\alpha F}.$ To do so, we first decompose the functional as in \eqref{eq:decomposicioF} so
        $$\Na{\nabla_\alpha F} \leq \Na{\nabla_\alpha I_1} + \Na{\nabla_\alpha I_2} + \Na{\nabla_\alpha I_3}. $$

        Since $I_1$ corresponds to the Euler part, its contribution has already been handled above. For the other two cases one notices that it is possible to differentiate under the integral sign. Moreover, one can check that all of the terms resulting from differentiating with respect to $\alpha$ satisfy the conditions of Lemma \ref{lemma:nuclisBons}. {Indeed, notice that the worst term comes from differentiating $I_{21},$ but since $h_1(r)\sim r^2$ as $r\to0$, the resulting kernel behaves like $|\alpha-\alpha'|\ln|\alpha-\alpha'|$  which satisfies conditions of Lemma \ref{lemma:nuclisBons}. In particular, the non-Eulerian contributions $I_2$ and $I_3$ enjoy better regularity: they define Lipschitz operators, and hence their Hölder bounds follow directly.}
        
		We have so far seen that $F$ is a bounded operator from $O^M\to \LipUa$. We now see that it is locally Lipschitz. To do so, we bound its Gateaux derivative as in previous sections. For $y\in O^M$
		\begin{equation*}
			\begin{aligned}
				F'(z(\alpha))y&=\frac{d}{d\delta}(F(z(\alpha)+\delta y(\alpha)))\big|_{\delta=0}\\
				&=\int_0^{2\pi} - K_1(\varepsilon |z(\alpha)-z(\alpha')|)\varepsilon \frac{(y(\alpha)-y(\alpha'))(z(\alpha)-z(\alpha'))}{|z(\alpha)-z(\alpha')|}z_\alpha (\alpha')d\alpha'\\
				&+\int_0^{2\pi} K_0(\varepsilon |z(\alpha)-z(\alpha')|) y_\alpha (\alpha') d\alpha' \eqqcolon G_1(z)y+G_2(z)y
			\end{aligned}
		\end{equation*}
		
		Notice that $G_2(z)y$ is essentially like $F(z)$. Therefore, one can repeat the same arguments to see that it is bounded on $\LipUa$. 
		
		We just have to see that $\NUa{G_1(z)}$ is bounded. To do so, we will use the series expansion on $K_1$:
		\begin{equation*}
			\begin{aligned}
			G_1(z)y&= -\int_0^{2\pi}  \frac{(y(\alpha)-y(\alpha'))(z(\alpha)-z(\alpha'))}{|z(\alpha)-z(\alpha')|^2}z_\alpha (\alpha')d\alpha' \\ 
			&- \int_0^{2\pi} g(\varepsilon |z(\alpha)-z(\alpha')|) \frac{(y(\alpha)-y(\alpha'))(z(\alpha)-z(\alpha'))}{|z(\alpha)-z(\alpha')|}z_\alpha (\alpha')d\alpha'\\
    &\eqqcolon G_{11}(z)y- G_{12}(z)y
			\end{aligned}
		\end{equation*}  		
		where $g(r)\coloneqq\frac{1}{2} r\left(\ln{\frac{r}{2}}g_1(r)-\frac{1}{2}g_2(r)\right)$.
		
		Now one can see that $G_{11}(z)y$ corresponds to one of the terms concerning Euler equation, thus following \cite[Proposition 8.9]{MajdaBertozzi} one gets the desired bound.

        It only remains to bound $\NUa{G_{12}(z)y}$. To do so first notice that since $y,z\in O^M$ and $g\in L^\infty([0,2\pi];\R)$ it is clear that $\Np{G_{12}(z)y}<\infty$. Thus, we compute the $\alpha$ derivative:
        
        \begin{equation*}
        \begin{aligned}
            \frac{dG_{12}(z)y}{d\alpha} &= \int_0^{2\pi} \varepsilon g'(\varepsilon |z(\alpha)-z(\alpha')|) \frac{(y(\alpha)-y(\alpha'))(z(\alpha)-z(\alpha'))^2}{|z(\alpha)-z(\alpha')|^2}z_\alpha (\alpha') z_\alpha (\alpha) d\alpha'\\ 
            &+\int_0^{2\pi} g(\varepsilon |z(\alpha)-z(\alpha')|) \frac{y_\alpha(\alpha)(z(\alpha)-z(\alpha'))}{|z(\alpha)-z(\alpha')|}z_\alpha (\alpha') d\alpha'\\
            &+\int_0^{2\pi} g(\varepsilon|z(\alpha)-z(\alpha')|) \frac{ (y(\alpha)-y(\alpha'))z_\alpha(\alpha)}{|z(\alpha)-z(\alpha')|}z_\alpha(\alpha')d\alpha'\\
            &-\int_0^{2\pi} g(\varepsilon |z(\alpha)-z(\alpha')|)\frac{(y(\alpha)-y(\alpha'))(z(\alpha)-z(\alpha'))^2}{|z(\alpha)-z(\alpha')|^3}z_\alpha (\alpha') z_\alpha (\alpha) d\alpha'\\&\eqqcolon J_1+J_2+J_3+J_4
        \end{aligned}
        \end{equation*}

        Notice now that on each term one can apply Lemma \ref{lemma:nuclisBons}, which finishes the proof. Indeed, for instance for $J_4$ one considers the kernel 
        \begin{align*}
            \mathcal{K}_4(\alpha,\alpha')&=g(\varepsilon |z(\alpha)-z(\alpha')|)\frac{(y(\alpha)-y(\alpha'))(z(\alpha)-z(\alpha'))^2}{|z(\alpha)-z(\alpha')|^3},
        \end{align*} acting on $z_\alpha(\alpha').$ {Using that $y,z\in O^M$ and the definition of $g(r)$ it is clear that $|\mathcal{K}_4(\alpha,\alpha')|\leq C |\alpha - \alpha'|\ln{|\alpha - \alpha'|}$ and $|\nabla_ \alpha\mathcal{K}_4(\alpha,\alpha')|\leq C \ln{|\alpha - \alpha'|}$ so the conditions in the Lemma hold. }
	\end{proof}

    \begin{remark}
          Notice that on all of the estimates done in the previous Lemma, if one considers initially $\Tilde{K}_0$ instead of $K_0$, then the constants corresponding to all terms containing $\varepsilon$ tend to 0, with the worst ones behaving like $\mathcal{O}(\varepsilon \ln \varepsilon)$ as $\varepsilon \to 0$. 
    \end{remark}

     \subsection{Global regularity for vortex patches}
    In the previous subsection we proved local in time existence and uniqueness of solution to the \eqref{def:CDE}, and since we are dealing with a transport problem this is equivalent to dealing with the vortex patch problem. Therefore, we have seen that if $\partial\Omega\in\LipUa$, being $\Omega$ a bounded, simply connected region of constant vorticity,  there exists a unique solution to the vortex patch problem that remains regular on a time interval $[0,T^*]$. Our goal now is to prove that the solution remains regular for all time.  To this end, we follow the approach of \cite[Chapter 8]{MajdaBertozzi}. We also note that the alternative approach introduced in \cite{VerderaRevisited}, which avoids the use of defining functions, could likely be adapted to the present setting.
    
    In order to do so, we reformulate the vortex patch problem in terms of a scalar function, $\varphi(x,t)$, that defines the patch boundary by $\{\varphi(x,t)>0\}=\Omega(t)$ and is convected with the flow by 
    \begin{equation}\label{eq:defFun}
        \begin{aligned}
            \frac{\partial \varphi}{\partial t}+v\cdot \nabla \varphi &= 0,\\
            \varphi(x,0)&=\varphi_0(x).
        \end{aligned}
    \end{equation}

    The function $\varphi$ is usually referred to as the \textit{defining function} of the patch, and the previous equation guarantees that it defines the patch boundary at later times if it initially satisfies $\{\varphi(x,0)>0\}=\Omega_0$. By the weak solution theory and the equivalence with the \eqref{def:CDE} for bounded domains with smooth boundaries, one gets that if $X(x,t)$ are the particle trajectories associated with initial data $\chi_{\Omega_0}$, then a solution to the previous equation is just $\varphi(x,t)=\varphi_0(X^{{-1}}(x,t)).$ 


    As in the Euler case, we prove global regularity for the patch boundary by the equivalence of the evolution of $\varphi$ with vortex patch equation. 
    \begin{theorem} \label{th:propDef}
        Given $\Omega_0$ bounded, and $\varphi\in \LipUa(\R^2;\R)$ satisfying $$\{\varphi(x,0)>0\}=\Omega_0 \text{ and } \inf_{x\in \partial\Omega_0} |\nabla \varphi(x,0)|\geq m > 0,$$ there exists a constant $C$ so that \eqref{eq:defFun} has a unique solution $\varphi(x,t)$ defined for all $x\in \R^2$ and all $t\in \R$ and satisfies
        \begin{equation*}
            \begin{aligned}
                \Np{\nabla v(\cdot,t)}&\leq \Np{\nabla v (\cdot,0)}e^{C|t|},\\
                \SNa{\nabla \varphi(\cdot, t)}&\leq \SNa{\nabla \varphi(\cdot, 0)}{\exp}[(C_0+\gamma)e^{C|t|}],\\
                \Np{\nabla \varphi(\cdot, t)}&\leq \Np{\nabla \varphi(\cdot, 0)}\exp[e^{C|t|}],\\
                |\nabla \varphi(\cdot, t)|_{\inf}&\geq |\nabla \varphi(\cdot, 0)|_{\inf}\exp[-e^{C|t|}].
            \end{aligned}
        \end{equation*}
        Here $C_0$ is a fixed constant and $|\nabla \varphi(\cdot)|_{\inf} = \inf_{x\in\partial\Omega} |\nabla \varphi(x)|$
    \end{theorem}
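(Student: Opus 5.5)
Existence and uniqueness of $\varphi(x,t)=\varphi_0(X^{-1}(x,t))$ follow from the weak-solution theory of Section \ref{sec:weak}. The flow $X$ is unique because $v$ is Log-Lipschitz (Lemma \ref{lemma:vlog-lip}). Hence the real content of Theorem \ref{th:propDef} is the set of a priori bounds. I would follow the Bertozzi--Constantin/Majda--Bertozzi scheme. Differentiating \eqref{eq:defFun} shows that $W\coloneqq\nabla^\perp\varphi$ satisfies
\[
\partial_t W+v\cdot\nabla W=\nabla v\,W,
\]
so along trajectories $|W|$ grows or decays at most like $\exp\big(\int_0^t\Np{\nabla v}\big)$. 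This immediately gives the bounds for $\Np{\nabla\varphi}$ and $|\nabla\varphi|_{\inf}$ once we know $\int_0^t\Np{\nabla v(\cdot,s)}ds\le e^{C|t|}$, up to constants. For the Hölder seminorm, I would differentiate the difference quotient along the flow and apply Grönwall. Using \eqref{eq:SNaComp}, this gives $\SNa{W(t)}\le \SNa{W(0)}\exp\big(C\int_0^t(\Np{\nabla v}+\SNa{\nabla v\,W}/\SNa{W}\ldots)\big)$. The key point is the observation that $\nabla v\,W=\partial_W v$ is a tangential derivative, i.e. a convolution of $q$ against the commutator-type kernel built from $\nabla K$ applied in the direction $W$.

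The central estimate is the logarithmic inequality
\[
\Np{\nabla v}\le C_0\Big(1+\log^+\frac{\SNa{W}}{|W|_{\inf}}\Big),
\]
together with $\SNa{\partial_W v}\le C\Np{\nabla v}\SNa{W}$. To prove both, I use the decomposition of Remark \ref{remark:1derivades}: $\nabla K=\SU+\SD$, together with \eqref{eq:derDistK}, which gives
\[
\nabla v=p.v.\,\SU*\chi_{\Omega_t}+\SD*\chi_{\Omega_t}+c\chi_{\Omega_t}.
\]
Since $\SD\in L^1$ with $\varepsilon$-independent norm, the term $\SD*\chi_{\Omega_t}$ is bounded by a constant. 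In the same way it contributes only $C\SNa{W}\Np{q}$ to the Hölder estimate of $\partial_W v$, as in the proof of \eqref{eq:SIOsna}. For the $\SU$ part I reproduce the geometric lemma of \cite[Chapter 8]{MajdaBertozzi}. At a point $x$, I split the integral into $|x-y|<\delta$, $\delta<|x-y|<R$ and $|x-y|>R$. The region $|x-y|>R$ is harmless thanks to $|\SU|\le C|x|^{-2}$ and the bounded support. On the intermediate annuli I use the zero mean over circles of $\SU$. On each circle, $\Omega_t$ differs from a half-plane only on a set of angular measure $\lesssim(r/\delta_*)^\gamma$, where $\delta_*=(|W|_{\inf}/\SNa{W})^{1/\gamma}$. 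Since $\SU$ is even, the half-plane contribution vanishes. Integrating then yields the logarithm. The $\varepsilon$-dependence is absorbed because $|\SU|\le C|x|^{-2}$ and $|\nabla\SU|\le C|x|^{-3}$ uniformly in $\varepsilon$, as shown in the proof of Lemma \ref{lemma:SIO}. The commutator bound for $\partial_W v$ follows \cite[Lemma 15]{cantero2022cgamma}/Majda--Bertozzi, applied to $p.v.\,\SU*(W\chi)$ minus $W\,(p.v.\,\SU*\chi)$, using that $\operatorname{div}W=0$ allows integration by parts.

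Combining the pieces gives a closed system: $\frac{d}{dt}\log\SNa{W}\le C\Np{\nabla v}$, $\frac{d}{dt}\log(1/|W|_{\inf})\le \Np{\nabla v}$, and the log inequality. Setting $y(t)=\log(\SNa{W}/|W|_{\inf})$, these yield $y'\le C(1+y)$, so $y\lesssim e^{C|t|}$. This in turn gives the stated double-exponential bounds for $\SNa{\nabla\varphi}$, $\Np{\nabla\varphi}$ and $|\nabla\varphi|_{\inf}$, and the bound $\Np{\nabla v(t)}\le\Np{\nabla v(0)}e^{C|t|}$. Finally, I would justify the computations first for smooth approximations (via Theorem \ref{th:EUholder}) or directly on the local $\LipUa$ solution given by Proposition \ref{prop:CDEpicard}, and continue it using these bounds. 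The main obstacle is the geometric logarithmic estimate for the non-homogeneous kernel $\SU$. There, the exact vanishing on half-planes must be checked using evenness and mean zero on every circle, which does hold here because $\SU$ is radial times a degree-zero angular factor. All constants must also be tracked to be independent of $\varepsilon$.
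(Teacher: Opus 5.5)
Your proposal is correct and follows essentially the same route as the paper. Like you, the paper reduces Theorem~\ref{th:propDef} to the Majda--Bertozzi scheme: the logarithmic estimate of Proposition~\ref{prop:geometric} (with $\SU$ treated as the Euler kernel and the $L^1$ part $\SD$ contributing only an additive constant), the commutator bound of Proposition~\ref{prop:commutador} (obtained by integrating by parts using $\operatorname{div}W=0$ and tangency), and Gr\"onwall-type growth bounds along the flow. The only differences are minor: the paper runs the commutator H\"older estimate for the full kernel $H=\SU+\SD$ (using only $|H|\le C|x|^{-2}$, $|\nabla H|\le C|x|^{-3}$ and Cotlar's lemma) and uses the splitting just to bound $\Np{H*\chi_\Omega}$ via \cite{mateu2009extra}, while in your half-plane comparison the angular discrepancy should read $\lesssim d(x)/r+(r/\delta_*)^\gamma$ for $r>d(x)=\operatorname{dist}(x,\partial\Omega_t)$ (as in Bertozzi--Constantin), which still integrates against $dr/r$ to the same logarithm.
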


    The proof of Theorem \ref{th:propDef} comes from combining the following results. For the details, see \cite[\S 8.3.3]{MajdaBertozzi}.
    \begin{prop}\label{prop:geometric}
        Assume that $v$ is given by $v=\nabla_x^\perp K_0 * \chi_{\Omega}$ and $\varphi$ such that $\{\varphi(x,0)>0\}=\Omega_0.$ Then $$\Np{\nabla v}\leq C_\gamma \left(1+\ln\frac{\SNa{\nabla\varphi(\cdot)} area(\Omega_0)^\frac{\gamma}{2}}{|\nabla\varphi(\cdot)|_{\inf}}\right).$$
    \end{prop}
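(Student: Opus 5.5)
We estimate $\nabla v$ through the decomposition of Remark~\ref{remark:1derivades}. By \eqref{eq:derDistK},
\[
\nabla v(x)=p.v.\int_{\Omega}\nabla K(x-x')\,dx'+c\,\chi_\Omega(x)
=p.v.\int_\Omega \SU(x-x')\,dx'+\int_\Omega \SD(x-x')\,dx'+c\,\chi_\Omega(x).
\]
The last term is bounded by $|c|$. The $\SD$ term is bounded by $\|\SD\|_{L^1}$, which is finite and independent of $\varepsilon$ by \eqref{eq:IntdefBessel}, exactly as in the proof of Lemma~\ref{lemma:SIO}. These two terms only add a constant to the right-hand side. It therefore suffices to show that $p.v.\int_\Omega \SU(x-x')\,dx'$ obeys the logarithmic bound of the statement. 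This mirrors the Euler argument of \cite[Prop.~8.4/Lemma~8.5]{MajdaBertozzi}. We use only three properties of $\SU$: zero mean on circles, $|\SU(x)|\le C|x|^{-2}$, and $|\nabla \SU(x)|\le C|x|^{-3}$ (proved in Lemma~\ref{lemma:SIO}), all with constants independent of $\varepsilon$.

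Fix $x$ and set $d(x)=\operatorname{dist}(x,\partial\Omega)$. Split the integral into the regions $|x-x'|<\delta$, $\delta\le|x-x'|\le L$ and $|x-x'|>L$, choosing $L$ with $\pi L^2=area(\Omega)$.

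\emph{Far region.} Since $|\SU|\le C|x|^{-2}$ and the area of $\Omega$ is fixed, the far region contributes $O(1)$.

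\emph{Intermediate region.} This gives the bound $C\ln(L/\delta)$ directly.

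\emph{Near region.} If $\delta\le d(x)$, the disc $B(x,\delta)$ lies entirely inside or entirely outside $\Omega$, and the contribution vanishes by the zero mean of $\SU$ on circles. The real work is when $x$ is close to $\partial\Omega$. We follow Bertozzi--Constantin. Take $\delta$ comparable to
\[
\delta^\gamma\simeq \frac{|\nabla\varphi|_{\inf}}{\SNa{\nabla\varphi}}.
\]
Near a boundary point $\tilde x$ with $|x-\tilde x|=d(x)$, the set $\Omega\cap B(x,\delta)$ differs from the half-disc $\{y:\ \nabla\varphi(\tilde x)\cdot(y-\tilde x)>0\}$ only in a region $R_\rho$ on each circle of radius $\rho$ whose angular measure is $O\big(\rho^\gamma\SNa{\nabla\varphi}/|\nabla\varphi|_{\inf}\big)$; this follows from the Taylor expansion of $\varphi\in\LipUa$. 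The half-disc contributes zero, because $\SU$ is even and has zero mean on circles, so its integral over any half circle vanishes. The symmetric-difference region contributes
\[
\int_0^\delta \frac{C}{\rho^2}\,\rho^\gamma\frac{\SNa{\nabla\varphi}}{|\nabla\varphi|_{\inf}}\,\rho\,d\rho\lesssim 1.
\]
If the half-space geometry is centred at $\tilde x$ rather than at $x$, the small shift $d(x)<\delta$ is handled either by the annulus argument above, or by using that the difference of the two principal values, $\int_{B(x,\delta)\setminus B(\tilde x,\cdot)}$, is controlled by the size bound together with the bound on $\nabla\SU$.

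Collecting the three regions gives
\[
|\nabla v(x)|\le C\Big(1+\ln\frac{L}{\delta}\Big)=C_\gamma\Big(1+\ln\frac{\SNa{\nabla\varphi}\,area(\Omega_0)^{\gamma/2}}{|\nabla\varphi|_{\inf}}\Big).
\]
Here we use that $area(\Omega_t)=area(\Omega_0)$ because the flow is incompressible. When $\delta>L$, the logarithm is replaced by $\max(1,\cdot)$, as in \eqref{eq:SIOepsilon}.

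The main obstacle is the near-boundary geometric estimate, specifically checking that the half-disc cancellation still holds for the non-homogeneous kernel $\SU$. It does, because the cancellation uses only parity and zero mean on each circle, both of which hold radius by radius for $\SU$.
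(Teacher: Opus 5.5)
Your proposal follows the paper's route. You split $\nabla K=\SU+\SD$, absorb $\SD$ (via its $\varepsilon$-independent $L^1$ norm) and $c\chi_\Omega$ into additive constants, and run the Euler geometric argument of Majda--Bertozzi/Bertozzi--Constantin on $\SU$, using only evenness, zero mean on circles and the bounds $|\SU|\le C|x|^{-2}$, $|\nabla\SU|\le C|x|^{-3}$. The paper gives even less detail here and simply defers to the Euler case.

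The one step worth tightening is the shift from $\tilde x$ to $x$ in the near-boundary region. The clean way is to compare $\Omega$ with the half-plane through $x$ parallel to the tangent line at $\tilde x$. This adds the standard $O(d(x)/\rho)$ angular error on circles of radius $\rho\ge d(x)$, which contributes at most $\int_{d(x)}^{\delta} d(x)\rho^{-2}\,d\rho\le 1$.
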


    \begin{prop}\label{prop:commutador}
        Assume that $v$ is given by $v=\nabla_x^\perp K_0 * \chi_{\Omega}$ and $W$ is a divergence-free vector field tangent to $\partial \Omega,$ then $$\nabla v(x)W ={\frac{1}{2\pi}} p.v. \int_{\Omega} \nabla_x[\nabla_x^\perp K_0(\varepsilon |x-y|)][W(x)-W(y)]dy.$$
        And, if $W\in\Lipa(\R^2;\R^2)$ then there exists a constant $C_0$ such that 
        \begin{equation}\label{eq:boundcommutador}
            \SNa{\nabla v W}\leq C_0 \Np{\nabla v}\SNa{W}.
        \end{equation}
    \end{prop}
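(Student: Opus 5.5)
We prove Proposition~\ref{prop:commutador} in two parts: first the integral formula for $\nabla v\,W$, then the Hölder bound \eqref{eq:boundcommutador}, splitting $\nabla v$ as in Remark~\ref{remark:1derivades}.

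\textbf{The formula.} By \eqref{eq:derDistK}, and up to the normalization constants of the paper, we have
\[
\nabla v(x)=\mathrm{p.v.}\int_{\Omega}\nabla K(x-y)\,dy + c\,\chi_\Omega(x).
\]
We multiply this by $W(x)$ and subtract the quantity $\mathrm{p.v.}\int_\Omega \nabla K(x-y)W(y)\,dy + c\,\chi_\Omega(x)W(x)$. We claim this quantity vanishes.

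Componentwise, $\sum_j\partial_jK^i(x-y)W_j(y) = -\operatorname{div}_y\big(K^i(x-y)W(y)\big)$, because $\operatorname{div}W=0$.

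Excise the ball $B(x,\delta)$ and apply the divergence theorem on $\Omega\setminus B(x,\delta)$. The boundary term on $\partial\Omega$ vanishes, since $W\cdot n=0$ there. The term on $\partial B(x,\delta)$ converges, by the computation leading to \eqref{eq:derDistK} together with \eqref{propietat:besselKprop}, to exactly $-c\,\chi_\Omega(x)W(x)$. This is where the behaviour $\delta K_1(\varepsilon\delta)\to 1/\varepsilon$ enters; for $x\in\partial\Omega$, where a half-ball appears, we use an a.e. argument.

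Subtracting gives the commutator formula. The principal value is in fact harmless: the kernel multiplied by $W(x)-W(y)$ is $O(|x-y|^{\gamma-2})$, which is integrable.

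\textbf{The bound.} We split $\nabla K=\SU+\SD$ as in Remark~\ref{remark:1derivades}.

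The $\SD$ part is an $L^1$ kernel whose integral is bounded independently of $\varepsilon$ by \eqref{eq:IntdefBessel}. Hence the map $x\mapsto\int_\Omega \SD(x-y)(W(x)-W(y))\,dy$ has Hölder seminorm at most $C\SNa{W}$. To see this, write it as $W(x)(\SD*\chi_\Omega)(x)-\SD*(\chi_\Omega W)$ and estimate both terms using the translation argument from the proof of Lemma~\ref{lemma:SIO}. Some care is needed with $\Np{W}$: we would rather bound this part by $C\SNa{W}$ directly. We do this by checking the difference quotient at $x_1,x_2$ with the same near/far splitting as below, using only $|W(x)-W(y)|\le\SNa{W}|x-y|^\gamma$ and $\int|\SD|\le C$.

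The $\SU$ part has zero mean on circles and satisfies $|\SU(x)|\le C|x|^{-2}$ and $|\nabla\SU(x)|\le C|x|^{-3}$, uniformly in $\varepsilon$ (this was checked in the proof of Lemma~\ref{lemma:SIO}). From here we run the classical commutator argument of \cite[Lemma 8.x]{MajdaBertozzi} and \cite{cantero2021regularity}.

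Fix $x_1,x_2$, set $d=|x_1-x_2|$ and $B=B(x_1,3d)$. On $B$, the integrands are bounded by $C\SNa{W}|x_i-y|^{\gamma-2}$, which gives a contribution $C\SNa{W}d^\gamma$. Off $B$, write
\[
\SU(x_1-y)\big(W(x_1)-W(y)\big)-\SU(x_2-y)\big(W(x_2)-W(y)\big)
=\big[\SU(x_1-y)-\SU(x_2-y)\big]\big(W(x_1)-W(y)\big)+\SU(x_2-y)\big(W(x_1)-W(x_2)\big).
\]
The first piece is controlled by the gradient bound $|\nabla\SU|\le C|x|^{-3}$, giving $C\SNa{W}d^\gamma$. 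The second piece equals $\big(W(x_1)-W(x_2)\big)\int_{\Omega\setminus B}\SU(x_2-y)\,dy$.

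\textbf{Main obstacle.} The key step is to show that $\sup_{x,r}\big|\int_{\Omega\setminus B(x,r)}\SU(x-y)\,dy\big|\lesssim \Np{\nabla v}+1$. This is what makes the factor $\Np{\nabla v}$ appear rather than a logarithm. We plan to handle it by noting that
\[
\int_{\Omega\setminus B(x_2,r)}\SU = \mathrm{p.v.}\int_\Omega\SU-\int_{\Omega\cap B(x_2,r)}\SU .
\]
The first term is $\nabla v(x_2)$, minus $c\,\chi_\Omega$, minus the $\SD$ integral, and the latter two are bounded. The second term is controlled by moving from $B(x_2,r)$ to $B(x_2,3d)$ (the difference over the annulus is $O(1)$) and then using the zero mean of $\SU$ on circles.

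This last step is the delicate one, since $\Omega\cap B$ is not a full ball. The trick we will use, as in Majda–Bertozzi, is to choose the center $x_1$ versus $x_2$ appropriately, or to rely on the comparison $\big|\int_{\Omega\cap A}\SU\big|\le C$ for annuli $A$ of comparable radii. All constants are $\varepsilon$-independent by \eqref{eq:controlBessel}.
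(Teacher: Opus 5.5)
Your derivation of the commutator formula (divergence theorem on $\Omega\setminus B(x,\delta)$, no flux through $\partial\Omega$, and the small circle producing $-c\,\chi_\Omega(x)W(x)$) is the paper's argument. Your near/far splitting for the Hölder bound also matches the paper. The gap is exactly at the step you flag as the main obstacle, namely the truncation bound
\[
\sup_{x,\;r>0}\Bigl|\int_{\Omega\setminus B(x,r)}\SU(x-y)\,dy\Bigr|\le C\bigl(\Np{\nabla v}+1\bigr).
\]
None of the devices you list proves it:
\begin{enumerate}
\item Zero mean on circles says nothing about $\mathrm{p.v.}\int_{\Omega\cap B(x,r)}\SU(x-y)\,dy$ when $\Omega\cap B(x,r)$ is an arbitrary set.
\item The annulus comparison only gives $\bigl|\int_{\Omega\cap\{r_1<|x-y|<r_2\}}\SU(x-y)\,dy\bigr|\le C\log(r_2/r_1)$. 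So it cannot connect the principal value ($r\to 0$) to $r\approx 3d$ with a bounded error; summing over dyadic scales produces exactly the divergence the estimate must avoid.
\item Switching the centre between $x_1$ and $x_2$ merely moves the same truncated integral to the other point.
\item Exploiting the regularity of $\partial\Omega$ would make the constant depend on the boundary, which is not what \eqref{eq:boundcommutador} asserts.
\end{enumerate}

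The missing ingredient is a maximal singular integral estimate. The paper gets it from Cotlar's lemma, after checking properties (1)--(4) of $H$ uniformly in $\varepsilon$. Its proof needs something absent from your plan: the $L^2$-boundedness, uniform in $\varepsilon$, of the convolution operator $T$ with kernel $\SU$. This follows from the size, smoothness and cancellation of $\SU$. The argument runs as follows.
\begin{enumerate}
\item Set $f_1=\chi_{\Omega\cap B(x,r)}$ and $f_2=\chi_{\Omega\setminus B(x,r)}$. Then the truncated operator satisfies $T_r\chi_\Omega(x)=Tf_2(x)$.
\item For $|z-x|<r/2$, the bound $|\nabla\SU(x)|\le C|x|^{-3}$ gives $|Tf_2(z)-Tf_2(x)|\le C$.
\item Average $|T\chi_\Omega(z)|+|Tf_1(z)|$ over $B(x,r/2)$, using $\|Tf_1\|_{L^2}\le C|B(x,r)|^{1/2}$. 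This yields $|T_r\chi_\Omega(x)|\le\Np{T\chi_\Omega}+C$, and $\Np{T\chi_\Omega}$ is controlled by $\Np{\nabla v}+1$.
\end{enumerate}

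A smaller point concerns $\SD$. Integrability alone does not control the far term $[\SD(x_1-y)-\SD(x_2-y)](W(x_1)-W(y))$: it only gives $\SNa{W}\int|\SD(z)||z|^\gamma\,dz\sim\SNa{W}\varepsilon^{-\gamma}$. You need a smoothness bound such as $|\nabla\SD(x)|\le C|x|^{-3}$, which follows from the bounds on $\nabla H$ and $\nabla\SU$. In fact only the term $(W(x_1)-W(x_2))\int_{\Omega\setminus B}H(x_2-y)\,dy$ requires splitting $H$ at all.
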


    {\begin{remark}
        The appearance of the commutator is not unexpected: the singular, distributional part of $\nabla\nabla^\perp K_0(\varepsilon|x-y|)$ coincides with the one coming from the Euler kernel (as seen in \eqref{eq:derDistK}). The additional terms in the series expansion of $K_0$ are smoother and do not contribute to the delta-type part of the kernel. Thus the same cancellation mechanism that produces the Euler commutator carries over directly to our case.
    \end{remark}}

    \begin{prop}
        We assume that $\varphi$ is a solution of the evolution problem on some time interval of time $|t|<T.$ We also assume that $\varphi(\cdot,t)\in \LipUa(\R^2;\R)$ and that $|\nabla \varphi(\cdot,t)|_{\inf}>0$ for $|t|\leq T.$ Then
        \begin{equation*}
            \begin{aligned}
                \SNa{\nabla \varphi(\cdot,t)}&\leq \SNa{\nabla \varphi(\cdot,0)}{\exp}\left[(C_0+\gamma)\int_0^t\Np{\nabla v(\cdot,s)}ds\right],\\
                \Np{\nabla \varphi(\cdot,t)}&\leq \Np{\nabla \varphi(\cdot,0)}\exp\left[\int_0^t \Np{\nabla v(\cdot,s)}ds\right],\\
                |{\nabla \varphi(\cdot,t)}|_{\inf}&\geq |{\nabla \varphi(\cdot,0)}|_{\inf} \exp\left[-\int_0^t\Np{\nabla v(\cdot,s)}ds\right].
            \end{aligned}
        \end{equation*}
    \end{prop}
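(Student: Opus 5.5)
We would prove the three estimates by differentiating the transport equation \eqref{eq:defFun} along particle trajectories, as in \cite[\S 8.3.3]{MajdaBertozzi}. We work with the tangent field $W\coloneqq\nabla^\perp\varphi$. Taking $\nabla^\perp$ of \eqref{eq:defFun} and using $\operatorname{div} v=0$ gives
\[
\partial_t W+v\cdot\nabla W=\nabla v\, W ,
\]
so that $W(X(\alpha,t),t)=\nabla_\alpha X(\alpha,t)\,W_0(\alpha)$. Moreover, $W$ is divergence free and, since $\varphi$ vanishes on $\partial\Omega_t$, it is tangent to the boundary. These are exactly the hypotheses of Proposition~\ref{prop:commutador}.

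\textbf{Bounds on $\Np{\nabla\varphi}$ and $|\nabla\varphi|_{\inf}$.} Along a trajectory $\frac{d}{dt}W(X(\alpha,t),t)=\nabla v(X,t)\,W(X,t)$. Hence
\[
\Bigl|\tfrac{d}{dt}|W(X,t)|\Bigr|\le \Np{\nabla v(\cdot,t)}\,|W(X,t)| .
\]
Grönwall's inequality applied forward gives the upper bound. Applied backward, or equivalently to $1/|W|$, it gives the lower bound. For the lower bound we use that $X(\cdot,t)$ maps $\partial\Omega_0$ onto $\partial\Omega_t$, so the infimum over the boundary is transported. Since $|\nabla\varphi|=|W|$, these are the second and third inequalities.

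\textbf{Hölder seminorm.} This is the main step. We estimate $\SNa{W(\cdot,t)}$ by comparing two trajectories. Set $D(t)=W(X(\alpha_1,t),t)-W(X(\alpha_2,t),t)$. Then
\[
\tfrac{d}{dt}D=(\nabla v\,W)(X(\alpha_1,t))-(\nabla v\,W)(X(\alpha_2,t)),
\]
and by \eqref{eq:boundcommutador}
\[
|D'(t)|\le C_0\Np{\nabla v}\,\SNa{W(\cdot,t)}\,|X(\alpha_1,t)-X(\alpha_2,t)|^\gamma .
\]
To remove the dependence on the labels, we divide by $|X(\alpha_1,t)-X(\alpha_2,t)|^\gamma$. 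This distance evolves with
\[
\Bigl|\tfrac{d}{dt}\ln|X(\alpha_1,t)-X(\alpha_2,t)|\Bigr|\le\Np{\nabla v(\cdot,t)} .
\]
Define $Q(t)=\sup_{\alpha_1\neq\alpha_2}|D(t)|/|X(\alpha_1,t)-X(\alpha_2,t)|^\gamma$, which equals $\SNa{W(\cdot,t)}$ because $X(\cdot,t)$ is a bijection. Differentiating the quotient then gives, in the sense of upper Dini derivatives,
\[
Q'(t)\le (C_0+\gamma)\,\Np{\nabla v(\cdot,t)}\,Q(t).
\]
The term $C_0$ comes from the commutator and the term $\gamma$ from the deformation of distances. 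Grönwall's inequality then yields the first inequality.

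\textbf{Main obstacle.} The delicate points are two. First, Proposition~\ref{prop:commutador} requires $W\in\Lipa$ to be tangent to $\partial\Omega_t$ at each time, which we justify from the transport of $\varphi$ and its zero set. Second, the supremum $Q$ is not differentiable, so we plan to take the supremum only after integrating in time: write $D(t)$ in integral form, bound it, and divide by the distance at time $t$ using the lower bound
\[
|X(\alpha_1,s)-X(\alpha_2,s)|\ge |X(\alpha_1,t)-X(\alpha_2,t)|\,e^{-\int_s^t\Np{\nabla v}} .
\]
This produces an integral inequality for $Q$ to which Grönwall applies. If the trajectories are only known to be $\LipUa$ on $|t|<T$, we apply the argument on that interval, which is all the statement requires.
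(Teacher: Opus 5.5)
Your proposal is correct and is essentially the Majda--Bertozzi argument that the paper invokes without writing it out (\cite[\S 8.3.3]{MajdaBertozzi}). You propagate $W=\nabla^\perp\varphi$ via $\partial_t W+v\cdot\nabla W=\nabla v\,W$, obtain the $L^\infty$ and boundary-infimum bounds by Grönwall along trajectories, and obtain the Hölder bound from \eqref{eq:boundcommutador}, which gives $C_0$, together with the deformation of distances under the flow, which gives $\gamma$.

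One slip needs fixing. When you divide by $|X(\alpha_1,t)-X(\alpha_2,t)|^\gamma$, you need the lower bound on the distance at time $t$, namely $|X(\alpha_1,t)-X(\alpha_2,t)|\ge |X(\alpha_1,s)-X(\alpha_2,s)|e^{-\int_s^t\Np{\nabla v}}$; your displayed inequality has $s$ and $t$ swapped. The correct direction follows from the same two-sided bound on $\frac{d}{dt}\ln|X(\alpha_1,t)-X(\alpha_2,t)|$. It yields
$Q(t)\le Q(0)e^{\gamma\int_0^t\Np{\nabla v}}+C_0\int_0^t\Np{\nabla v(s)}e^{\gamma\int_s^t\Np{\nabla v}}Q(s)\,ds$,
and Grönwall applied to $e^{-\gamma\int_0^t\Np{\nabla v}}Q(t)$ then gives the first inequality.
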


    We will only prove Proposition \ref{prop:commutador}, since it is the one that differs the most from the equivalent propositions for the Euler case. {For the other two, the same arguments done for the Euler case work here. The only adjustment occurs in the proof of Proposition \ref{prop:geometric}, where we use the kernel decomposition $\SU+\SD$ from Remark \ref{remark:1derivades}. The singular part $\SU$ is treated exactly as the Euler kernel, yielding the same logarithmic estimate, while the $L^1$ part $\SD$ contributes an additive constant to the $L^\infty$ bound of $\nabla v$. This constant is handled identically to the antisymmetric constant $c\chi_{\Omega}$ in Euler’s kernel: it does not affect the logarithmic dependence on the boundary smoothness and is absorbed into the coefficients of the Gronwall inequality, leaving the structure of the proof and the conclusion of global regularity unchanged.}

    \begin{proof}[Proof of Proposition \ref{prop:commutador}]
        From \eqref{eq:derDistK} we have that 
        \begin{equation}\label{eq:derDistXi}
            \nabla v(x) = {\frac{1}{2\pi}} p.v.\int_{\Omega} \nabla[\nabla^\perp K_0(\varepsilon|x-y|)]dy  + c \chi_\Omega(x),
        \end{equation}        
        where {$c= \frac{1}{2} \begin{pmatrix} 0 & -1 \\ 1 & 0\end{pmatrix}.$}  
        
        We use Green's formula and the fact that $W$ is divergence free and tangent to $\partial \Omega$ to rewrite the above expression:
        \begin{equation*}
            \begin{aligned}
                \frac{1}{2\pi} &p.v. \int_{\Omega} \nabla[\nabla^\perp K_0(\varepsilon|x-y|)]W(y)dy \\
                &=-\lim_{\delta \to 0} \frac{1}{2\pi}\int_{|x-z|=\delta,\, z\in \Omega}\left[ W(z)\cdot \frac{x-z}{\delta}\right] \nabla^\perp K_0(\varepsilon |x-z|) dz
            \end{aligned}
        \end{equation*}

        By setting $x-z=\delta \theta $ with $|\theta|=1$, and using the series expansion of $K_0(r),$  we can rewrite as 
        \begin{equation*}
            \begin{aligned}
                &-\lim_{\delta \to 0} \frac{1}{2\pi}\int_{|\theta|=1}\left[ W(x-\delta\theta)\cdot \theta \right] \nabla^\perp K_0(\varepsilon \delta) \delta d\sigma(\theta)\\
                &=-\lim_{\delta \to 0} \frac{1}{2\pi}\int_{|\theta|=1}\left[ W(x-\delta\theta)\cdot \theta \right] \left[\frac{-\delta \theta^\perp}{\delta^2} +\mathcal{O}(1)\right]\delta d\sigma(\theta)
            \end{aligned}
        \end{equation*}

        Finally, write $W(x)=(W_1(x),W_2(x))$ and $\theta=(\theta_1,\theta_2)$ to obtain
        \begin{equation*}
        \begin{aligned}
        &\frac{1}{2\pi}\chi_{\Omega}(x)\int_{|\theta|=1} (W(x)\cdot\theta)\,\theta^\perp\, d\sigma(\theta) \\
        &= \frac{1}{2\pi}\chi_{\Omega}(x)\Bigg(
        W_1(x)\int_{|\theta|=1} \theta_1\theta^\perp\, d\sigma(\theta)
        + W_2(x)\int_{|\theta|=1} \theta_2\theta^\perp\, d\sigma(\theta)
        \Bigg)=-cW(x) \chi_{\Omega}(x),
        \end{aligned}
        \end{equation*}

        {where $c$ is the same $2\times2$ constant matrix as in \eqref{eq:derDistXi}. Therefore, we can substitute the second term of \eqref{eq:derDistXi} to get }
         $$\nabla v(x)W = \frac{1}{2\pi} p.v. \int_{\Omega} \nabla_x[\nabla_x^\perp K_0(\varepsilon |x-y|)][W(x)-W(y)]dy.$$

         We now see the second part of the proposition. To do so, consider $$G(x)=p.v.\int_{\R^2} H(x-y)[W(x)-W(y)]\chi_\Omega(y),$$ where $H(x-y)\coloneqq \nabla[ \nabla^\perp K_0({\varepsilon|x-y|})]$. 
         
         Recall from Lemma \ref{lemma:propietatsK} and {Remark \ref{remark:1derivades}} that $|H(x-y)|\leq \frac{C}{|x-y|^2}$ and that $H(x-y)={\SU+\SD}$ where $\SU$ has mean value zero on spheres and $\SD$ is integrable. Moreover, by using \eqref{propietat:besselRecurrencia} one can see that $|\nabla H(x-y)|\leq \frac{C}{|x-y|^3},$ as done in the proof of Lemma \ref{lemma:SIO}.  We use this properties to bound $\SNa{G}$:
        
        \begin{equation*}
        \begin{aligned}
            G(x)-G(x+h) &= p.v. \int H(x-y)[W(x)-W(y)]{\chi_\Omega(y)}dy\\
            &-p.v. \int H(x+h-y)[W(x+h)-W(y)]\chi_\Omega(y)dy\\ 
            &=p.v. \int_{|x-y|<2h} H(x-y)[W(x)-W(y)]\chi_\Omega(y)dy\\&- p.v. \int_{|x-y|<2h} H(x+h-y)[W(x+h)-W(y)]\chi_\Omega(y)dy\\
            &+p.v. \int_{|x-y|\geq 2h} H(x-y)[W(x)-W(x+h)]\chi_\Omega(y)dy\\&- p.v. \int_{|x-y|\geq 2h}[H(x-y)-H(x+h-y)][W(x+h)-W(y)]\chi_\Omega(y)dy\\ &= I_1+ I_2+I_3+I_4
        \end{aligned}            
        \end{equation*}

        By the stated properties on $H$, it is clear that $|I_1|,|I_2|\leq C\SNa{W}h^\gamma \Np{\chi_\Omega}.$ Also $$|I_4|\leq \int_{|x-y|\geq 2h} h \frac{C}{|x-y|^{3-\gamma}}\SNa{W}\Np{\chi_\Omega} \leq C\SNa{W}h^\gamma \Np{\chi_\Omega}.$$

        To bound $|I_3|$ notice that the properties aforementioned on $H$ imply also the following ones: 
         \begin{enumerate}
             \item For each $0<\delta<N,$ $|\int_{\delta<|x|<N}H(x)dx|\leq c_1,$ for $c_1$ a constant independent of ${\varepsilon},\; N,$ and furthermore $\lim_{\delta\to 0}\int_{\delta <|x|<N} H(x)dx$ exists for each fixed $N$. \label{P1}
             \item $\int_{|x|\leq R}|x||H(x)|dx\leq c_2R,$ for all $R>0,$ with $c_2$ independent of $R.$  \label{P2}
             \item $\int_{|x|\geq 2|y|} |H(x-y)-H(x)|dx\leq c_3,$ $y\neq 0,$ and $c_3$ independent of $y$.  \label{P3}
             \item If $x_1, x_2, x_3, y$ are such that $|x_1-x_2|,|x_2-x_3|, |x_1-x_3|\leq R/2$ and $|x_1-y|, |x_2-y|,|x_3-y|\geq R,$ then $$|H(x_1-y)-H(x_2-y)|\leq c_4\frac{|x_1-x_2|}{|x_3-y|^{3}}.$$ \label{P4}
         \end{enumerate}

         Indeed, the decomposition of $H$ into ${\SU,\SD}$ implies \ref{P1}. {Property} \ref{P2} is implied by the growth condition on $H$ and \ref{P3} and \ref{P4} by the growth condition on $\nabla H.$ Properties \ref{P1}-\ref{P4} allow for the application of Cotlar's lemma (see \cite[p. 291]{torchinsky1986real})

         Therefore, we have that $$|I_3|\leq \SNa{W}h^\gamma \left|p.v.\int_{|x-y|\leq 2h} H(x-y)\chi_\Omega(y)dy \right| \leq C \SNa{W} h^\gamma (\Np{H* \chi_\Omega}+\Np{\chi_\Omega}),$$
         where in the last inequality we used Cotlar's Lemma. 

        Finally, to get \eqref{eq:boundcommutador} it remains to show that $\Np{H* \chi_\Omega}$ is bounded. To do so, we use again the decomposition of $H$. For $\SD$ the result is clear and for $\SU$ one can follow the first part of the proof of the Main Lemma in \cite{mateu2009extra} where a stronger result is proved for kernels $K(x)=\frac{\omega(x)}{|x|^n}$ in $\R^n$ being $\omega$ homogeneous of degree zero and even. To adapt it to this case it is enough to recall that $\SU$ takes one of the following forms for $i,j=1,2$ and $i\neq j$: $$\frac{x_ix_j}{|x|^2}K_2(\varepsilon |x|),\; \frac{x_i^2-x_j^2}{|x|^3}K_1(\varepsilon|x|),$$ and now take as $\omega$ in the {aforementioned Main Lemma} either $\frac{x_ix_j}{|x|^2}$ or  $\frac{x_i^2-x_j^2}{|x|^2}$ and notice that by \eqref{eq:controlBessel} the rest of the expression behaves as $\frac{1}{|x|^2}.$
    \end{proof}

    Combining the results in this section with the uniqueness of weak solution gives finally the proof of Theorem \ref{th:vortex-patch}.

    \section{Convergence to the 2D Euler Equations} \label{sec:limit}
    
    {In this section, we study the behavior of solutions to \eqref{def:QGSW} as $\varepsilon \to 0$, 
    the regime in which the system formally reduces to the 2D Euler equations. 
    Physically, this corresponds to an infinite Rossby deformation radius, where free–surface effects cease to matter; 
    mathematically, it reflects the passage from the regularized operator $(\Delta - \varepsilon^2)^{-1}$ 
    to the classical Biot--Savart law $\Delta^{-1}$, thus connecting \eqref{def:QGSW} with standard Euler dynamics.} 

    Our convergence result in this direction is the following.
    
    \begin{theorem}\label{th:limitRegular}
        Let $q_0\in \LPet_c (\R^2,\R)$ for $0<\gamma<1.$ Then there exist unique solutions $q^\varepsilon(\cdot,t),$ $\omega(\cdot,t)$ in $\LPet_c(\mathbb{R}^2,\mathbb{R})$ to the \eqref{def:QGSW} equations and the Euler equation, respectively, with initial data \( q_0 \), {and a time $T>0$ such that}
\begin{equation} \label{eq:convgEuler}
    \lim_{\varepsilon\to 0} \sup_{t\in [0,T]} \Na{q^\varepsilon(t) - \omega(t)} = 0.
\end{equation}
    \end{theorem}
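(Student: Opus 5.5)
Existence and uniqueness of $q^\varepsilon$ and $\omega$ follow from Theorem~\ref{th:EUholder} (and its Euler counterpart in \cite{MajdaBertozzi}). Persistence of little Hölder regularity follows from $q^\varepsilon(t)=q_0\circ X_\varepsilon^{-1}(t)$: if $q_0$ is a $\Lipa$ limit of smooth $\phi_n$, then $\phi_n\circ X_\varepsilon^{-1}$ are $C^{1,\gamma}$ and converge to $q^\varepsilon$ in $\Lipa$ by \eqref{eq:NaCompInversa}. I will argue in Lagrangian variables, as in Section~\ref{sec:regular}. Write $F_\varepsilon$ for the operator in \eqref{eq:ODEF} with kernel $K=K_\varepsilon$, and $F_0$ for the Euler operator with kernel $K_E(x)=\frac{1}{2\pi}\frac{x^\perp}{|x|^2}$. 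Lemma~\ref{lemma:G(x)Y} gives Lipschitz bounds for $F_\varepsilon$ on $O_M$ with constants independent of $\varepsilon$, and the bound on $F_\varepsilon$ itself is likewise uniform. Hence the Picard time $T=T(M)$ can be taken uniform in $\varepsilon\in[0,1]$. So there is $M$ and $T>0$ with $X_\varepsilon(t),X_0(t)\in O_M$ for $t\in[0,T]$ and all small $\varepsilon$. Gronwall on $\frac{d}{dt}(X_\varepsilon-X_0)=F_\varepsilon(X_\varepsilon)-F_\varepsilon(X_0)+(F_\varepsilon-F_0)(X_0)$ then yields
\[
\sup_{[0,T]}\NUa{X_\varepsilon(t)-X_0(t)}\le C e^{CT}\int_0^T\NUa{(F_\varepsilon-F_0)(X_0(s))}\,ds .
\]

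The key step is showing $\NUa{(F_\varepsilon-F_0)(X)}\to0$ uniformly for $X\in O_M$ with bounded $\Na{q_0}$. I use $K_\varepsilon-K_E=\frac{x^\perp}{2\pi|x|}\big(\varepsilon K_1(\varepsilon|x|)-\frac1{|x|}\big)$. By \eqref{eq:reDefiKernels}, $\varepsilon K_1(\varepsilon r)-\frac1r=\frac{\varepsilon^2 r}{2}\ln\frac{\varepsilon r}{2}g_1-\frac{\varepsilon^2 r}{4}g_2$. On the compact region $|x|\le R$ (all supports are uniformly bounded), the difference kernel and its first two derivatives are therefore $O(\varepsilon^2|\ln\varepsilon|)$ times kernels that are at worst $|x|^{-1}\cdot|x|\ln|x|$ in size. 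For the value and first derivative I use the formula $\nabla F(X)=\int\nabla K(X(\alpha)-X(\alpha'))\nabla X(\alpha)q_0(\alpha')d\alpha'$ (with $p.v.$ plus the delta term from \eqref{eq:derDistK}). The delta constants coincide for $K_\varepsilon$ and $K_E$, so they cancel. What remains is a convolution of $f=q_0\circ X^{-1}$ with a kernel $D_\varepsilon=\nabla K_\varepsilon-\nabla K_E$. This kernel is integrable on $B_R$, satisfies $\|D_\varepsilon\|_{L^1(B_R)}+\|\,|x|\,\nabla D_\varepsilon\|_{L^1(B_R)}\lesssim\varepsilon^2|\ln\varepsilon|$ (modulo the logarithm), and has the same sphere-cancellation decomposition as in Remark~\ref{remark:1derivades}. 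The Hölder seminorm is then estimated by the $\delta$-splitting of Lemma~\ref{lemma:SIO}: near part via $\SNa{f}$ and far part via $\nabla D_\varepsilon$. Here uniform control of $\SNa{f}$ suffices, giving a bound $o(1)\Na{q_0}$. This is the step I expect to be most delicate. Uniformity in the Hölder norm of $\nabla F$ requires either a rate at every scale, or an $\varepsilon$-independent bound plus convergence. If the direct estimate fails near the diagonal, I will use the little Hölder assumption: approximate $q_0$ by smooth $\phi$, for which $C^{1,\gamma}$ convergence follows from extra smoothness, and control the $q_0-\phi$ part by the uniform bounds of Lemma~\ref{lemma:G(x)Y}. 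This density argument is exactly why $\LPet$ is assumed.

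Finally, convert flow convergence into vorticity convergence:
\[
q^\varepsilon(t)-\omega(t)=q_0\circ X_\varepsilon^{-1}-q_0\circ X_0^{-1}.
\]
The $L^\infty$ part is bounded by $\SNa{q_0}\Np{X_\varepsilon^{-1}-X_0^{-1}}^\gamma\to0$. Composition is not continuous in $\Lipa$ for general Hölder data, but it is for little Hölder data. I approximate $q_0$ by $\phi\in C^\infty_c$ with $\Na{q_0-\phi}<\eta$. Then \eqref{eq:NaCompInversa} and the uniform bound on $\NUa{X_\varepsilon}$ give $\Na{(q_0-\phi)\circ X_\varepsilon^{-1}}+\Na{(q_0-\phi)\circ X_0^{-1}}\le C\eta$. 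For smooth $\phi$, $\Na{\phi\circ X_\varepsilon^{-1}-\phi\circ X_0^{-1}}\le C_\phi\NUa{X^{-1}_\varepsilon-X^{-1}_0}\to0$, using the inverse-map bounds of Lemma~\ref{lemma:propietats} and the uniform convergence of the flows. Letting $\varepsilon\to0$ and then $\eta\to0$ gives \eqref{eq:convgEuler}.
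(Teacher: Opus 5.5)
Your skeleton matches the paper's: a Picard time uniform in $\varepsilon$ (from the $\varepsilon$-independent constants of Section~\ref{sec:regular}), a Gr\"onwall bound for $X_\varepsilon-X_0$ driven by $\NUa{(F_\varepsilon-F_0)(X)}\to0$, and the three-term splitting with a smooth $\phi_\eta$. You differ from the paper in two places.

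First, for the kernel-difference estimate you work in Eulerian variables and use $\SNa{f}$. This goes through directly. The delta parts cancel, and $D_\varepsilon=\nabla(K_\varepsilon-K_E)$ is locally integrable, with $|D_\varepsilon(x)|\lesssim\varepsilon^2(1+|\ln(\varepsilon|x|)|)$ for $\varepsilon|x|\le1$ and $|D_\varepsilon(x)|\lesssim|x|^{-2}\le\varepsilon^2$ otherwise. Hence $|D_\varepsilon*f(x+h)-D_\varepsilon*f(x)|\le\SNa{f}|h|^\gamma\sup_{|A|\le 2|\operatorname{supp}f|}\int_A|D_\varepsilon| = O(\varepsilon^2|\ln\varepsilon|)\,\SNa{f}|h|^\gamma$. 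You need no $\delta$-splitting, no density fallback, and no restriction to $|x|\le R$. The paper instead stays Lagrangian and uses only $\Np{q_0}$, through pointwise bounds on $\nabla H$ and $\nabla^2 H$ and a near/far splitting. So its lemma needs less of the data, while yours is shorter.

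Second, you run Picard in $\LipUa$ and obtain $q^\varepsilon(t)\in\LPet$ by composing smooth approximants with the flow, which is correct. The paper runs Picard in $\LUPet$ on the ball $U_\delta$, checking via \eqref{eq:SNaGradF} that $F^\varepsilon$ maps into $\LUPet$.

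That second choice is where your argument has a gap, though an easily repaired one. You assert $\NUa{X_\varepsilon^{-1}-X_0^{-1}}\to0$ from the inverse-map bounds of Lemma~\ref{lemma:propietats} and the convergence of the flows. But inversion is not continuous in $\LipUa$. Take $X_\varepsilon=X_0+\varepsilon a$ for a fixed vector $a$: then $X_\varepsilon\to X_0$ in $\LipUa$, yet $D(X_\varepsilon^{-1})-D(X_0^{-1})=(DX_0^{-1})(\cdot-\varepsilon a)-DX_0^{-1}$ need not tend to $0$ in $\Lipa$ unless $DX_0^{-1}\in\LPet$. The paper gets this convergence precisely from its $\LUPet$ framework and Lemma~\ref{lemma:contInversa}.

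In your framework there is a cheaper fix. Term $II$ only needs $\Na{X_\varepsilon^{-1}-X_0^{-1}}\to0$, which is the paper's own intermediate line before it upgrades to the $\LipUa$ norm. This follows from three facts:
\begin{itemize}
\item $\Np{X_\varepsilon^{-1}-X_0^{-1}}\lesssim\Np{\nabla X_0^{-1}}\Np{X_\varepsilon-X_0}$;
\item the uniform bounds on $\Np{\nabla X_\varepsilon^{-1}}$;
\item the interpolation inequality $\SNa{g}\le 2\Np{g}^{1-\gamma}\Np{\nabla g}^{\gamma}$.
\end{itemize}
With this repair the little H\"older hypothesis is used only to approximate $q_0$, not to control the flows.

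A smaller point concerns your Gr\"onwall step. The bound $\NUa{F_\varepsilon(X_\varepsilon)-F_\varepsilon(X_0)}\le L\NUa{X_\varepsilon-X_0}$ needs the segment between the two flows to stay where Lemma~\ref{lemma:G(x)Y} applies, but $O_M$ is not convex because of the determinant constraint. The paper's $U_\delta$ is a ball, so this is automatic there. You can arrange the same by shrinking $T$ so that both flows remain in a small $\LipUa$-ball around the identity.
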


    Notice that although we proved existence and uniqueness of solutions with initial data on $\Lipa_c,$  our convergence result is not a convergence result in that space but instead  is the so-called little Hölder space, $\LPet_c(\R^2,\R)$, which is the closed subspace of $\Lipa_c(\R^2,\R)$ consisting of those functions satisfying the vanishing condition 
    \begin{equation*} 
        \lim_{h\to 0}\sup_{\substack{x,y\in\R^2\\ 0<|x-y|<h}} \frac{|f(x)-f(y)|}{|x-y|^\gamma}=0.
    \end{equation*}
    It is easy to see that $\LPet(\R^2,\R)$ is the closure of $\mathcal{C}^\infty(\R^2,\R)$ in the usual Hölder norm of $\Lipa(\R^2,\R)$. This fact is essential for our proof. 
    
    The existence and uniqueness result for the Euler equation {in the little Hölder space} can be seen in \cite{misiolek2018continuity} (see also \cite{magana2024continuitysolutionmapactive} for more general kernels). For the \eqref{def:QGSW} one can easily adapt the proofs found there, we give here a brief sketch.

    Consider as in Section \ref{sec:regular} the equation for the particle trajectories
    \begin{equation}\label{eq:mapsEps}
        \begin{aligned}
        \frac{dX^\varepsilon}{dt}(\alpha,t)
        &=\int_{\mathbb{R}^2} K^\varepsilon(X^\varepsilon(\alpha,t)-X^\varepsilon(\alpha',t))\,q_0(\alpha')\,d\alpha'
        \eqqcolon F^\varepsilon(X^\varepsilon(\alpha,t)),\\
        X^\varepsilon(\alpha,0) &= \alpha,
        \end{aligned}
    \end{equation} 

    {where $K^\varepsilon$ denotes the kernel defined in \eqref{def:kernel}, written to make the dependence on $\varepsilon$ explicit. For $\varepsilon=0$ one recovers the 2D Euler kernel $K^0(x)=\frac{1}{2\pi}\frac{x^\perp}{|x|^2}.$}
    
    We will apply Picard-Lindelöf theorem (Theorem \ref{th:Picard}) with $B=\LUPet(\R^2;\R^2)$ and the open set \begin{equation} \label{def:OpenSetU}
    U_\delta = \left\{ X: {\R^2} \to \R^2 : X= e+\varphi_X, \varphi_X\in\LUPet(\R^2;\R^2) \text{ and } \NUa{\varphi_X}<\delta \right\}, 
    \end{equation} where $e(x) = x$ and $0<\delta\leq \frac{1}{2}$ is chosen small enough so that 
    \begin{equation*}
        \inf_{x\in \R^2} \det DX(x)>\frac{1}{2}.
    \end{equation*}
    Clearly, $U_\delta$ can be identified with an open ball centered at the {identity} in $\LUPet(\R^2;\R^2)$ 
    {and each map $X\in U_\delta$ is bi-Lipschitz. Indeed, $DX(x)=I+D\varphi_X(x)$ satisfies $(1-\delta)|v|\leq |DX(x)v|\leq (1+\delta)|v|$ for all $v\in\R^2$. For $x,y\in\R^2$ we write $$X(x)-X(y)=\int_0^1 DX\big(y+t(x-y)\big)(x-y)\,dt,$$
    and applying the pointwise bounds under the integral yields
    $$
    (1-\delta)|x-y| \leq |X(x)-X(y)| \leq (1+\delta)|x-y|.$$
    } 
    
    {For $\delta$ small, $U_\delta$ is contained in the set $O_M$, for some $M > 1$, introduced in \eqref{def:obert} of Section~\ref{sec:regular}.}

    By using the explicit form of the distributional derivative of the kernel given in \eqref{eq:derDistK} and repeated use of Lemmas \ref{lemma:propietats} and \ref{lemma:SIO} one gets that for $X\in U_\delta$    
    \begin{equation}\label{eq:SNaGradF}
        \SNa{DF^\varepsilon(X)}\leq C_1(\Np{q_0}+\SNa{q_0})\SNa{D \varphi_X}+C_2(1+\Np{D \varphi_X})^{1+\gamma} \SNa{q_0},
    \end{equation}
    and     
    \begin{equation*}
        \NUa{F^\varepsilon(X)}\leq C{\Na{q_0}},
    \end{equation*}
    where the constants involved are independent of $\varepsilon$.
    
    Then $F^\varepsilon(X)$ maps $U_\delta$ to $\LUPet$ since \eqref{eq:SNaGradF} yield
    $$\lim_{h\to 0}\sup_{0<|x-y|<h}\frac{|D F^\varepsilon(X)(x)-D F^\varepsilon(X)(y)|}{|x-y|^\gamma}=0,$$
    since $\varphi_X\in \LUPet$ and $q_0\in\LPet.$ 

    Finally, to apply Picard-Lindelöf theorem one must see that $F^\varepsilon$ is a locally Lipschitz continuous mapping. This is indeed the case as shown in Section \ref{sec:regular}. {Moreover, we saw that the constants appearing are independent from $\varepsilon$. Therefore, there exists a time $T^{\text{QGSW}}$, independent of $\varepsilon$, such that the solution to \eqref{eq:mapsEps} is unique. The same Picard-Lindelöf argument applied to Euler yields a time $T^{\text{Euler}}>0$, for which the Euler particle trajectory map exists uniquely. Set $T\coloneqq \min \{T^{\text{QGSW}}, T^{\text{Euler}}\}$}.

    To prove \eqref{eq:convgEuler} we will need the following lemma, the proof of which can be seen in \cite[Lemma 2.2]{misiolek2018continuity}. 

    \begin{lemma} \label{lemma:contInversa}
        Let $0<\gamma<1.$ Suppose that $X,Y \in U_\delta.$ Then, the functions $X,Y \to X\circ Y$ and $X\to X^{-1}$ are continuous in the  $\LipUa$ norm topology.
    \end{lemma}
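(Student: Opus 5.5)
The plan is to prove the two continuity statements separately. In both, the only non-routine ingredient is continuity of $Y\mapsto g\circ Y$ in the \emph{inner} variable. This fails on the full Hölder space $\Lipa$ (translations are not continuous there), so we will use that $\LPet$ is the closure of $\mathcal{C}^\infty$. Throughout, maps in $U_\delta$ are bi-Lipschitz with constants $1\pm\delta$ and satisfy $\det DX>\frac12$, so Lemma~\ref{lemma:propietats} applies with uniform constants.

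\emph{Composition.} Let $X,X'$ and $Y,Y'$ be in $U_\delta$. We split
\[
X\circ Y-X'\circ Y'=(X-X')\circ Y+\big(X'\circ Y-X'\circ Y'\big).
\]
The first term is controlled in $\NUa{\cdot}$ by $\NUa{X-X'}$ times a constant depending on $\NUa{Y}$. For this we use $D((X-X')\circ Y)=(D(X-X')\circ Y)\,DY$ together with \eqref{eq:algebra} and \eqref{eq:SNaComp}. For the second term, the $L^\infty$ part is bounded by $\Np{DX'}\Np{Y-Y'}$. The derivative part is
\[
(DX'\circ Y)DY-(DX'\circ Y')DY'=(DX'\circ Y)(DY-DY')+\big(DX'\circ Y-DX'\circ Y'\big)DY'.
\]
The first piece is handled by \eqref{eq:algebra}. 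It therefore suffices to show that, for fixed $g=D\varphi_{X'}\in\LPet$, we have $\Na{g\circ Y-g\circ Y'}\to0$ as $\NUa{Y-Y'}\to0$.

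\emph{Key step (expected main obstacle).} Given $\eta>0$, choose $g_\eta\in\mathcal{C}^\infty$ with bounded derivatives up to order two and $\Na{g-g_\eta}<\eta$, which is possible by density. Then
\[
g\circ Y-g\circ Y'=(g-g_\eta)\circ Y-(g-g_\eta)\circ Y'+\big(g_\eta\circ Y-g_\eta\circ Y'\big).
\]
The first two terms have $\Na{\cdot}\le C\eta$ by \eqref{eq:SNaComp}, with $C$ uniform on $U_\delta$. For the smooth term we write
\[
g_\eta\circ Y-g_\eta\circ Y'=\int_0^1 Dg_\eta\big(Y'+s(Y-Y')\big)\,ds\,(Y-Y').
\]
The integrand is $\Lipa$-bounded, with a bound depending on $\|g_\eta\|_{C^2}$ and $\NUa{Y},\NUa{Y'}$. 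The factor $Y-Y'$ is small in $\Lipa$, since $\SNa{Y-Y'}\le C\Np{Y-Y'}^{1-\gamma}\Np{D(Y-Y')}^{\gamma}$ by interpolation. Hence the smooth term tends to $0$, and letting $\eta\to0$ gives the claim. The difficulty is that the $\eta$-dependent constant blows up. This is harmless because $\eta$ is fixed before $Y'\to Y$.

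\emph{Inversion.} First, $X^{-1}$ lies in a bounded subset of $\LipUa$ with $\det D X^{-1}\ge c>0$, uniformly for $X\in U_\delta$, by Lemma~\ref{lemma:propietats}. For the $L^\infty$ part we use
\[
X^{-1}-Y^{-1}=X^{-1}\circ Y\circ Y^{-1}-X^{-1}\circ X\circ Y^{-1},
\]
together with the Lipschitz bound $(1-\delta)^{-1}$ for $X^{-1}$, which gives $\Np{X^{-1}-Y^{-1}}\le C\Np{X-Y}$. For the derivative we write $D(X^{-1})=(DX)^{-1}\circ X^{-1}$. The map $X\mapsto(DX)^{-1}$ is continuous in $\Lipa$: the identity $A^{-1}-B^{-1}=A^{-1}(B-A)B^{-1}$ combined with \eqref{eq:algebra} and \eqref{eq:NaGradNabla} gives this. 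Moreover $(DX)^{-1}-I\in\LPet$, since $\LPet$ is a closed algebra containing $D\varphi_X$ and the cofactor formula divides by the $\LPet$ function $\det DX\ge\frac12$. Then
\[
(DX)^{-1}\circ X^{-1}-(DY)^{-1}\circ Y^{-1}
=\big((DX)^{-1}-(DY)^{-1}\big)\circ X^{-1}+\big((DY)^{-1}\circ X^{-1}-(DY)^{-1}\circ Y^{-1}\big).
\]
The first term is small by \eqref{eq:SNaComp}. The second is small by the key step above, applied to $g=(DY)^{-1}-I\in\LPet$ with inner maps $X^{-1}\to Y^{-1}$. That key step only needs uniform $\LipUa$ bounds and $\Np{D(X^{-1}-Y^{-1})}\to 0$, and the latter follows from the same identity at the $L^\infty$ level. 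This closes the argument.
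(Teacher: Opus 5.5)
Your argument is correct. The paper does not prove this lemma; it only cites \cite[Lemma~2.2]{misiolek2018continuity}. So your proof is a self-contained substitute for that reference rather than a rival to an argument in the text.

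You follow the standard route for such statements. You also place the little H\"older hypothesis exactly where it cannot be avoided: continuity of $Y\mapsto g\circ Y$ in the inner variable, which fails on the full space $\Lipa$. You obtain it by approximating $g$ with mollified functions, which is the same density fact the paper calls essential for Theorem~\ref{th:limitRegular}. Your choice of base point $(X',Y')$ in the composition step correctly keeps the outer function $g=D\varphi_{X'}$ fixed. Likewise, taking $Y$ as the base point in the inversion step keeps $g=(DY)^{-1}-I$ fixed, and you verified that this $g$ lies in $\LPet$.

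Two cosmetic points:
\begin{itemize}
\item Since $Y=e+\varphi_Y$ is unbounded, the constants you write as depending on $\NUa{Y}$ should be read as depending on $\NUa{\varphi_Y}$. The paper uses the same abuse of notation.
\item In the inversion step you do not need $\Np{D(X^{-1}-Y^{-1})}\to 0$. The convergence $\Np{X^{-1}-Y^{-1}}\to 0$ together with a uniform bound on $\Np{D(X^{-1}-Y^{-1})}$ already makes $\Na{X^{-1}-Y^{-1}}$ small, by the interpolation inequality you used in the key step.
\end{itemize}
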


    Let $X^\varepsilon(t)$ and $X^0(t)$ be the corresponding Lagrangian flows solving the Cauchy problem in $U_\delta$, for the \eqref{def:QGSW} and Euler equations respectively, with initial data $q_0$. Given any $\eta >0$ and using he fact that smooth functions are dense in $\LPet,$ we can choose $\phi_\eta$ in $C_c^\infty(\R^2;\R)$ such that $$\Na{\phi_\eta-q_0}<\eta.$$
    
    We estimate
    \begin{equation*}
        \begin{split}
            \Na{q^{{\varepsilon}}-\omega}&=\Na{q_0\left((X^\varepsilon)^{-1}(\alpha,t)\right)-q_0\left((X^0)^{-1}(\alpha,t)\right)}\\
            &\leq \Na{q_0\left((X^\varepsilon)^{-1}(\alpha,t)\right)-\phi_\eta\left((X^\varepsilon)^{-1}(\alpha,t)\right)}\\ &+\Na{\phi_\eta\left((X^\varepsilon)^{-1}(\alpha,t)\right)-\phi_\eta\left((X^0)^{-1}(\alpha,t)\right)}\\&+\Na{\phi_\eta\left((X^0)^{-1}(\alpha,t)\right)-q_0\left((X^0)^{-1}(\alpha,t)\right)}=I+II+III.
        \end{split}
    \end{equation*}
    
   {Terms $I$ and $III$ are handled in the same way. Indeed, by \eqref{eq:NaCompInversa} we obtain $$ I \leq \Na{q_0-\phi_\eta}\bigl(1+\NUa{X^\varepsilon}^{3\gamma}\bigr) \leq C\eta, $$
and the same estimate holds for $III$.} {For the second term we have 
   \begin{equation*}
			\begin{aligned}
				II&\leq \int_0^1 \Na{\nabla \phi_\eta\left((1-r)(X^\varepsilon)^{-1}+r(X^0)^{-1}\right)\left((X^\varepsilon)^{-1}-(X^0)^{-1}\right)}dr\\
                &{\leq C \Na{\nabla \phi_\eta}\left(1+\NUa{X^\varepsilon}^{3\gamma}+\NUa{X^0}^{3\gamma}\right) \Na{(X^\varepsilon)^{-1}-(X^0)^{-1}}} \\
				& \leq C \NUa{ \phi_\eta} \NUa{(X^\varepsilon)^{-1}(\alpha,t)-(X^0)^{-1}(\alpha,t)}
			\end{aligned}
		\end{equation*}
    where on the second inequality we have used the algebra properties of Hölder functions, \eqref{eq:algebra}, and once more \eqref{eq:NaCompInversa}, and in the third inequality we estimate the $\Lipa$ norm of the difference by the $\LipUa$ norm — not an optimal bound, but sufficient for our purposes. } {Notice that although $\NUa{\phi_\eta}$ may grow as $\eta\to0$, it remains a fixed finite constant for each fixed $\eta>0$.} Therefore, if we can prove that $X^\varepsilon$ converges to $X^0$ in $\LUPet$ then by Lemma \ref{lemma:contInversa} we will have that $II$ converges to 0 and \eqref{eq:convgEuler} will follow.

    \begin{lemma}\label{lemma:FepsToF0}
        Let $0<\gamma<1$ and let $U_\delta$ be defined as in \eqref{eq:mapsEps}. Let $q_0\in~ \LPet_c(\R^2; \R)$.
        Then $$ \sup_{X\in U_\delta}\NUa{F^\varepsilon(X)-F^0(X)}\longrightarrow 0, \text{ as }\varepsilon\to 0.$$
    \end{lemma}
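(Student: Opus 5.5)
We plan to write $F^\varepsilon(X)-F^0(X)$ as a single operator with the difference kernel
\[
D^\varepsilon(x)\coloneqq K^\varepsilon(x)-K^0(x)=\frac{x^\perp}{2\pi|x|^2}\bigl(\varepsilon|x|K_1(\varepsilon|x|)-1\bigr),
\]
so that
\[
(F^\varepsilon-F^0)(X)(\alpha)=\int_{\R^2} D^\varepsilon\bigl(X(\alpha)-X(\alpha')\bigr)q_0(\alpha')\,d\alpha'.
\]
After the change of variables $x'=X(\alpha')$ this becomes $(D^\varepsilon * f)\circ X$, with $f=(q_0\circ X^{-1})\det DX^{-1}$. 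For $X\in U_\delta$ we have $\Na{f}\le C\Na{q_0}$, uniformly on $U_\delta$, by Lemma~\ref{lemma:propietats}, and $\operatorname{supp} f$ stays in a fixed ball. Since $\NUa{X}\le 1+\delta+|\cdot|$ is controlled, the composition rules of Lemma~\ref{lemma:propietats} reduce the claim to two estimates, uniform over $f$ with $\Na{f}\le C$ and support in a fixed ball $B_R$:
\[
\Np{D^\varepsilon*f}\to0,\qquad \Na{\nabla(D^\varepsilon*f)}\to0 .
\]
We will control the $\Lipa$-seminorm of $\nabla(F^\varepsilon-F^0)(X)=\nabla(D^\varepsilon*f)(X)\,DX$ by the product rule~\eqref{eq:algebra}.

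The key input is the behaviour of $m(r)\coloneqq rK_1(r)-1$. From \eqref{eq:reDefiKernels} we get $m(r)=O(r^2|\ln r|)$ as $r\to0$, and $|m|\le C$ for all $r$ by \eqref{eq:controlBessel}. Hence $|D^\varepsilon(x)|\le C\min\bigl(\varepsilon^2|x||\ln(\varepsilon|x|)|,\,|x|^{-1}\bigr)$, which gives $\Np{D^\varepsilon*f}\le C\varepsilon^2 R^3|\ln(\varepsilon R)|\to0$. For the gradient we will use \eqref{eq:derDistK}. The delta parts of $\nabla K^\varepsilon$ and $\nabla K^0$ coincide, so they cancel and $\nabla(D^\varepsilon*f)=\mathrm{p.v.}\,\nabla D^\varepsilon * f$. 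A direct computation with \eqref{propietat:besselDerivades} (equivalently, differentiating $\frac{x^\perp}{|x|^2}m(\varepsilon|x|)$) shows that
\[
\nabla D^\varepsilon(x)=\nabla K^0(x)\,m(\varepsilon|x|)+\frac{x^\perp\otimes x}{|x|^3}\,\varepsilon\,m'(\varepsilon|x|),
\]
with $|m'(r)|\le Cr|\ln r|$ near $0$. So on $|x|\le 2R$ the kernel $\nabla D^\varepsilon$ is bounded by $C\varepsilon^2|\ln(\varepsilon|x|)|$, an integrable function whose $L^1(B_{2R})$ norm is $O(\varepsilon^2R^2|\ln \varepsilon R|)$. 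Splitting $\nabla D^\varepsilon$ as in Remark~\ref{remark:1derivades} is therefore not needed locally.

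Since $f$ is supported in $B_R$, only the region $|x-x'|\le 2R$ matters when evaluating at points $x\in B_R$, and away from $B_{2R}$ the function $D^\varepsilon*f$ is smooth. We will handle this by truncating: write $\nabla D^\varepsilon=\chi_{B_{4R}}\nabla D^\varepsilon+(1-\chi_{B_{4R}})\nabla D^\varepsilon$. The local part has $L^1$ norm $\to0$, so Young's inequality bounds both its $L^\infty$ and $\Lipa$ contributions by $o(1)\Na{f}$, exactly as in the treatment of $\SD$ in Lemma~\ref{lemma:SIO}. For the far part there is a subtlety: $\nabla D^\varepsilon$ is not small at infinity, since $\nabla D^\varepsilon\approx-\nabla K^0$ for $|x|\gg\varepsilon^{-1}$. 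This is harmless, because the region $|x-x'|>4R$ is only reached when $|x|>3R$, where we can instead estimate $D^\varepsilon*f$ pointwise. There, $|\nabla^2 D^\varepsilon(y)|\lesssim \varepsilon^2|\ln(\varepsilon|y|)|/|y|+\min(\dots)$ on $|y|\le\varepsilon^{-1}$ and $\lesssim|y|^{-3}$ beyond, so the $C^1$ and Lipschitz norms of $\nabla(D^\varepsilon*f)$ on $\{|x|>3R\}$ are bounded by $C\Np{f}R^2\sup_{|y|\ge 2R}\bigl(|\nabla D^\varepsilon(y)|+R|\nabla^2D^\varepsilon(y)|\bigr)$.

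\textbf{Main obstacle.} The decay issue just described is the hardest step. For fixed $\varepsilon$ and $|y|\sim\varepsilon^{-1}$, the quantity $|\nabla D^\varepsilon(y)|\sim|y|^{-2}\sim\varepsilon^{2}$ is indeed small, and for $|y|\ge 2R$ one has $|\nabla D^\varepsilon(y)|\le C\min(\varepsilon^2|\ln\varepsilon R|,\,R^{-2})$. The difficulty is that $\nabla D^\varepsilon$ does not become small as $|y|\to\infty$: it only tends to $0$ at the rate $|y|^{-2}$, not uniformly in $\varepsilon$. So we will not estimate the sup over $|y|\ge2R$. Instead we will use the actual quantity: for $|x|>3R$, $\nabla(D^\varepsilon*f)(x)=\int_{B_R}\nabla D^\varepsilon(x-x')f(x')dx'$, and we bound it by $\Np{f}R^2\min\bigl(\varepsilon^2|\ln\varepsilon|x||,\,|x|^{-2}\bigr)\le C\Np{f}\varepsilon^{2}|\ln\varepsilon|R^2$ (the maximum of the min over $|x|$), which tends to $0$; the Hölder seminorm is handled in the same way using $\nabla^2 D^\varepsilon$. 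If this bookkeeping turns out awkward, the fallback is to do the whole computation directly in the Lagrangian variables, writing $\nabla F^\varepsilon(X)$ as in the proof of Lemma~\ref{lemma:G(x)Y}, where the same two estimates appear term by term.
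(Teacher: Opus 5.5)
Your argument works up to a few fixable slips, noted in the second paragraph, and it takes a genuinely different route from the paper.

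\textbf{Comparison with the paper.} The paper never leaves Lagrangian variables. It estimates $\int H(X(\alpha)-X(\alpha'))q_0(\alpha')\,d\alpha'$ directly and splits the $\alpha'$-integral according to whether $\varepsilon|X(\alpha)-X(\alpha')|\le 1$. For the H\"older seminorm of the gradient it uses the classical splitting $|\alpha-\alpha'|\lessgtr Md$, the mean value theorem, bi-Lipschitz control of the intermediate point $\xi$, and pointwise bounds on $\nabla^2 H$. You instead push forward to $(D^\varepsilon*f)\circ X$ with $f=(q_0\circ X^{-1})\det DX^{-1}$. You then use the structural fact that $\nabla D^\varepsilon$ has no singularity: on a fixed ball its $L^1$ norm is $O(\varepsilon^2R^2|\ln\varepsilon R|)$. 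Young's inequality therefore gives the sup and H\"older bounds of the local part at once, exactly as for $\SD$ in Lemma~\ref{lemma:SIO}, with no $Md$-splitting and no second derivatives near the support. The price is the Jacobian factor and the uniform $C^\gamma$ control of $f$ over $U_\delta$, which Lemma~\ref{lemma:propietats} provides as you say. You also need a separate far-field argument, because the tail of $\nabla D^\varepsilon$ is not integrable. The paper's computation works directly with the functional used in the Picard scheme and gives explicit $O(\varepsilon)$ bounds. Yours is shorter and isolates why the lemma is true.

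\textbf{Corrections.} First, your bound $\|D^\varepsilon*f\|_{L^\infty}\le C\varepsilon^2R^3|\ln(\varepsilon R)|$ only holds for $|x|\lesssim R$. At $|x|\sim\varepsilon^{-1}$ one has $D^\varepsilon*f(x)\approx D^\varepsilon(x)\int f$ with $|D^\varepsilon(x)|\sim\varepsilon$, so the global rate is $O(\varepsilon)$ and no better when $\int q_0\neq 0$. The correct estimate is $\|D^\varepsilon*f\|_{L^\infty}\le\|D^\varepsilon\|_{L^\infty}\|f\|_{L^1}\le C\varepsilon R^2\|f\|_{L^\infty}$. This holds because $|D^\varepsilon(y)|=\frac{\varepsilon}{2\pi}\frac{|m(r)|}{r}$ with $r=\varepsilon|y|$, and $|m(r)|/r$ is bounded on $(0,\infty)$.

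Second, since $m(1)\neq 0$, your small-$r$ bounds should read $|m(r)|\le Cr^2(1+|\ln r|)$ and $|m'(r)|=rK_0(r)\le Cr(1+|\ln r|)$.

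Third, the ``main obstacle'' is not an obstacle. From $|\nabla D^\varepsilon(y)|\le C|y|^{-2}\big(|m(\varepsilon|y|)|+\varepsilon|y|\,|m'(\varepsilon|y|)|\big)$ you get $\sup_{|y|\ge 2R}|\nabla D^\varepsilon(y)|\le C\varepsilon^2(1+|\ln\varepsilon R|)\to 0$. The reason is that wherever $\nabla D^\varepsilon\approx-\nabla K^0$, namely $|y|\gtrsim\varepsilon^{-1}$, one already has $|\nabla K^0(y)|\lesssim\varepsilon^2$. The only genuine point is that $(1-\chi_{B_{4R}})\nabla D^\varepsilon\notin L^1$. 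So Young's inequality must be replaced by $\|f\|_{L^1}$ times this supremum, which is what you end up doing.

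You do not even need $\nabla^2 D^\varepsilon$ for the far part $G$. The difference $f(x-z)-f(y-z)$ vanishes outside a set of $z$ of measure at most $2\pi R^2$. Hence $|G(x)-G(y)|\le 2\pi R^2\sup_{|z|\ge 4R}|\nabla D^\varepsilon(z)|\,\SNa{f}\,|x-y|^\gamma$ for all $x,y$. This also handles pairs straddling $|x|=3R$, which your sketch leaves implicit.
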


With the Lemma above, which we prove at the end of the section, we are now ready to prove the convergence of the flows. Subtracting the two equations and integrating in time, $$X^\varepsilon(\alpha,t)-X^0(\alpha,t)
=
\int_0^t
\bigl(F^\varepsilon(X^\varepsilon(\alpha,s))-F^0(X^0(\alpha,s))\bigr)\,ds .
$$

Adding and subtracting $F^\varepsilon(X^0(\alpha,s))$ inside the integrand and using that ${F^\varepsilon}$ is Lipschitz,
we obtain
$$
\NUa{X^\varepsilon(\alpha,t)-X^0(\alpha,t)}
\leq
L\int_0^t \NUa{X^\varepsilon(\alpha,s)-X^0(\alpha,s)}\,ds
+
t\sup_{X\in U_\delta}\NUa{F^\varepsilon(X)-F^0(X)} .
$$

Before applying the next estimate, we recall the following version of Grönwall’s lemma from \cite[Cor.~6.2]{Amann_1990}.

\begin{lemma}[Grönwall]
Let $J\subset\R$ be an interval and $t_{0}\in J$.  
Assume that $a,\beta,u\in C(J;\R^+)$ and that 
$a(t)=a_{0}(|t-t_{0}|),$ where $a_0\in C(\R^+;\R^+)$ is a monotone increasing function. Assume that
\[
u(t)\;\le\; a(t)
   + \Bigl|\int_{t_{0}}^{t}\beta(s)\,u(s)\,ds\Bigr|,
   \qquad \forall t\in J.
\]
Then
\[
u(t)\;\le\;
a(t)e^{\left|\int_{t_{0}}^{t}\beta(s)\,ds \right|},
\qquad \forall t\in J.
\]
\end{lemma}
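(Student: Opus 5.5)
The plan is the classical integrating-factor argument. The one twist is to freeze the monotone coefficient $a$ at the endpoint before integrating. I would first treat $t\ge t_0$ and then deduce the case $t\le t_0$ by reflecting time.

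\emph{Case $t\ge t_0$.} Fix $T\in J$ with $T\ge t_0$ and work on $[t_0,T]$. There the absolute value in the hypothesis is harmless, since $\beta u\ge 0$. Set
\[
R(t)\coloneqq\int_{t_0}^{t}\beta(s)u(s)\,ds .
\]
Because $\beta,u$ are continuous, $R$ is $C^1$ with $R(t_0)=0$ and $R'=\beta u$. The key observation is that $a(t)=a_0(t-t_0)\le a_0(T-t_0)=a(T)$ for $t\in[t_0,T]$, by monotonicity of $a_0$. Hence the hypothesis gives
\[
R'(t)\le \beta(t)\bigl(a(T)+R(t)\bigr),\qquad t\in[t_0,T].
\]
Multiplying by $\exp\bigl(-\int_{t_0}^{t}\beta\bigr)$ gives
\[
\frac{d}{dt}\Bigl(R(t)e^{-\int_{t_0}^{t}\beta}\Bigr)\le a(T)\,\beta(t)\,e^{-\int_{t_0}^{t}\beta}.
\]
Integrating from $t_0$ to $T$, the right-hand side is an exact derivative, and we obtain
\[
R(T)\le a(T)\Bigl(e^{\int_{t_0}^{T}\beta}-1\Bigr).
\]
Inserting this into $u(T)\le a(T)+R(T)$ yields $u(T)\le a(T)e^{\int_{t_0}^{T}\beta}$. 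Since $T\ge t_0$ was arbitrary, this is the claim for $t\ge t_0$.

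\emph{Case $t\le t_0$.} Define $\tilde u(s)=u(t_0-s)$, $\tilde\beta(s)=\beta(t_0-s)$ and $\tilde a(s)=a_0(s)$ for $s\ge 0$ in the reflected interval. A change of variables shows $\bigl|\int_{t_0}^{t}\beta u\bigr|=\int_0^{t_0-t}\tilde\beta\tilde u$. The reflected data therefore satisfy the hypothesis of the first case with base point $0$, and its conclusion transforms back to the stated bound with $\bigl|\int_{t_0}^{t}\beta\bigr|$.

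The only delicate point is that $a$ is not assumed differentiable, or even monotone in $t$ on all of $J$; it is only monotone in $|t-t_0|$. This is why one must fix the endpoint $T$ and use the constant bound $a(T)$ rather than trying to differentiate $a$. Everything else is routine.
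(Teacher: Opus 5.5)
Your proof is correct and complete. The paper does not prove this lemma itself; it quotes it from Amann (Cor.~6.2). There it is deduced from the general Gr\"onwall inequality with variable coefficient, $u(t)\le a(t)+\bigl|\int_{t_0}^{t}a(s)\beta(s)e^{|\int_s^t\beta(\sigma)\,d\sigma|}\,ds\bigr|$. Monotonicity of $a_0$ then gives $a(s)\le a(t)$ for $s$ between $t_0$ and $t$, and the remaining integral equals $e^{|\int_{t_0}^t\beta|}-1$ exactly.

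You reach the same conclusion more directly by freezing $a$ at the endpoint $T$ first. This means only the constant-coefficient integrating-factor argument on $[t_0,T]$ is needed. The ingredients are the same: $\beta\ge 0$, monotonicity of $a_0$ used only through $a(t)\le a(T)$ on $[t_0,T]$, and no differentiation of $a$. Your reflection $s=t_0-t$ correctly reduces the case $t<t_0$ to the forward case, since it turns $\bigl|\int_{t_0}^{t}\beta u\bigr|$ into $\int_0^{t_0-t}\tilde\beta\tilde u$ and $a(t)$ into $a_0(t_0-t)$.
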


The previous lemma yields
$$
\sup_{t\in[0,T]}\NUa{X^\varepsilon(\alpha,t)-X^0(\alpha,t)} \leq {T}e^{LT} \sup_{X\in U_\delta}\NUa{F^\varepsilon(X)-F^0(X)}
$$
which goes to 0 by Lemma \ref{lemma:FepsToF0} as $\varepsilon\to 0$. {Hence, $\forall t\in[0,T], \; X^\varepsilon \to X^0$ in $\LUPet(\R^2;\R^2).$} 

    \begin{proof}[Proof of Lemma \ref{lemma:FepsToF0}]
        We define first $$H(x)\coloneqq K^\varepsilon(x)-K^0(x)=\frac{\varepsilon x^\perp}{2\pi|x|}h(x), \quad h(x)\coloneqq K_1(\varepsilon|x|)-\frac{1}{\varepsilon |x|}.$$

        Notice that $H$ is not singular near the origin. In fact, using the series expansion of $K_1$, see \eqref{eq:reDefiKernels}, it's easy to see that for $\varepsilon |x|\leq 1,$ $|H|\leq C \varepsilon^2 |x|(|\ln{(\varepsilon|x|)| + 1)}$ and following Lemma \ref{lemma:propietatsK} we see that for $\varepsilon|x|\geq 1$, $|H|\leq \frac{\varepsilon}{2\pi}\left(Ce^{-\varepsilon|x|}+\frac{1}{\varepsilon |x|}\right).$

        Moreover, by using \eqref{propietat:besselDerivades} we have 
        $$\nabla H = \varepsilon\nabla \left(\frac{x^\perp}{2\pi |x|}\right) h(x)+\frac{\varepsilon x^\perp}{2\pi |x|}\left(-\frac{\varepsilon x}{|x|}K_0(\varepsilon |x|)-\frac{x}{|x|}h(x)\right),$$
        so proceeding as before we get, for $ \varepsilon |x| \leq 1$
        $$|\nabla H|\leq \varepsilon^2 C (|\ln(\varepsilon |x|)| +1), \quad |\nabla^2 H|\leq \frac{\varepsilon^2 C}{|x|}$$
        and for $\varepsilon|x|\geq 1$ {we will use the bounds $$|\nabla H|\leq C_1 \varepsilon, \quad |\nabla^2 H|\leq C_2 \varepsilon,$$} which follow directly from 
        $$|\nabla H|\leq \frac{\varepsilon}{2\pi}\left(Ce^{-\varepsilon|x|}+\frac{1}{\varepsilon |x|^2}\right), \quad |\nabla^2 H|\leq \frac{\varepsilon}{2\pi}\left(Ce^{-\varepsilon|x|}+\frac{1}{\varepsilon |x|^3}\right),$$
        whenever $\varepsilon|x|\geq 1$.
        
        Fix $X\in U_\delta$ and $\alpha\in\R^2.$ Let $S=\operatorname{supp} q_0\subset B_R(0)$, We define $$\N \coloneqq\{\alpha' \in S:\; \varepsilon|X(\alpha)-X(\alpha')|\leq 1 \},\qquad \F\coloneqq S\setminus \N.$$ 

        Now,  we split the difference into the regions just defined
        \begin{equation*}
            \begin{aligned}
                \bigl(F^\varepsilon(X)-F^0(X)\bigr)(\alpha)
            &=\int_{\R^2}H\left(X(\alpha)-X(\alpha')\right)\,q_0(\alpha')\,d\alpha'\\&=\int_{\N} H\left(X(\alpha)-X(\alpha')\right)\,q_0(\alpha')\,d\alpha'\\&+\int_{\F}H\left(X(\alpha)-X(\alpha')\right)\,q_0(\alpha')\,d\alpha' \eqqcolon I+II
            \end{aligned}
        \end{equation*}
        
        and using the bound for the difference of the kernels and that $q_0$ is compactly supported we have  
        \begin{align*}
        |I|     
        &\leq C \Np{q_0} \varepsilon \int_{\N\cap B_R(0)} \varepsilon |X(\alpha)-X(\alpha')|(|\ln \varepsilon|X(\alpha)-X(\alpha')||+1) 
              \,d\alpha'\leq  C \Np{q_0} \varepsilon R^2
        \end{align*}
        since $t|\ln{t}|+t$ is bounded on the considered domain, and  
        \begin{align*}
            |II|&\leq C\Np{q_0} \int_{\F \cap B_R(0)} \varepsilon\left(Ce^{-\varepsilon|X(\alpha)-X(\alpha')|}+\frac{1}{\varepsilon |X(\alpha)-X(\alpha')|}\right)d\alpha'\\
            &\leq C\Np{q_0} \varepsilon \int_{B_R(0)} \left(\frac{C}{e}+ 1\right)d\alpha'  
            \leq C\Np{q_0} \varepsilon R^2
        \end{align*}
        
        Therefore, $$\Np{F^\varepsilon(X)-F^0(X)}\leq C\Np{q_0} \varepsilon R^2.$$
        
        Differentiating under the integral gives
        $$
            \nabla(F^\varepsilon-F^0)(X)(\alpha)
            =\int_{\R^2}
                \nabla H \bigl(X(\alpha)-X(\alpha')\bigr)\,
               \nabla X(\alpha)\,q_0(\alpha')\,d\alpha',
        $$
        
        where no Dirac delta appears thanks to the cancellation of the singular parts of 
        $\nabla K^\varepsilon$ and $\nabla K^0$, see Proposition~\ref{prop:commutador}.

        Now, due to the bounds of $\nabla H$ one can proceed as before and split the integral into $\N$ and $\F$ so 
        \begin{align*}
            |\nabla(F^\varepsilon-F^0)(X)(\alpha)|
            &\leq \Np{q_0} \Np{\nabla X} \int_{S}
                |\nabla H(X(\alpha)-X(\alpha'))|\,
               d\alpha' \\ 
               &\leq \Np{q_0} \Np{\nabla X} \left(I+II\right)
        \end{align*}

        where 
        \begin{align*}
            I &= \int_{\N} |\nabla H(X(\alpha)-X(\alpha'))|\,
               d\alpha' \\&\leq \int_{\N \cap B_R(0)} \frac{\varepsilon}{|X(\alpha)-X(\alpha')|} \varepsilon|X(\alpha)-X(\alpha')||\ln{\varepsilon |X(\alpha)-X(\alpha')|}|\,
               d\alpha' \\
               &\leq C \varepsilon \int_{B_R(0)} \frac{1}{|\alpha-\alpha'|} \, d\alpha'  \leq C  \varepsilon R
        \end{align*}
        
        and 

        \begin{align*}
            II &= \int_{\F} |\nabla H(X(\alpha)-X(\alpha'))|\,
               d\alpha' \leq \int_{\F \cap B_R(0)} C_1 \varepsilon d\alpha' \leq C \varepsilon R^2
        \end{align*}

        so  $$\Np{\nabla (F^\varepsilon(X)-F^0(X))}\leq C\Np{q_0} \Np{\nabla X} \varepsilon (R+R^2).$$

        It remains to bound the Hölder seminorm of the gradient. Let $d=|\alpha-\beta|$ and write 
        \begin{align*}
            \nabla (F^\varepsilon - F^0)(X)(\alpha)&-\nabla(F^\varepsilon - F^0)(\beta) \\&= \int_{\R^2} (\nabla H(X(\alpha)-X(\alpha')) - \nabla H(X(\beta)-X(\alpha')))\nabla X(\alpha) q_0(\alpha') d\alpha' \\
            &-\int_{\R^2} \nabla  H(X(\beta)-X(\alpha'))(\nabla X(\alpha)-\nabla X(\beta))q_0(\alpha') d\alpha' \eqqcolon I + II.
        \end{align*}

        Now for $II$ we can proceed as for the $L^\infty$  norm
        \begin{align*}
            |II|&\leq d^\gamma \Na{\nabla X} \Np{q_0} \bigg(\int_{\N \cap B_R(0)} |\nabla H(X(\beta)-X(\alpha'))| d\alpha'  \\ 
            &\hspace{4cm}+\int_{\F\cap B_R(0)} |\nabla H(X(\beta)-X(\alpha'))| d\alpha'\bigg)\\ 
            &\leq C(R+R^2) d^\gamma \Np{q_0}\varepsilon 
        \end{align*}

        Then, {let $M > \frac{(1+\delta)}{(1-\delta)}$} and write, $I=I_1+I_2$ with 
        $$I_1= \int_{|\alpha-\alpha'|\leq Md} (\nabla H(X(\alpha)-X(\alpha')) - \nabla H(X(\beta)-X(\alpha')))\nabla X(\alpha) q_0(\alpha') d\alpha', $$
        $$I_2= \int_{|\alpha-\alpha'|\geq Md} (\nabla H(X(\alpha)-X(\alpha')) - \nabla H(X(\beta)-X(\alpha')))\nabla X(\alpha) q_0(\alpha') d\alpha'. $$

    For the first term we bound each part separately as before
    \begin{align*}
        |I_1|&\leq \int_{|\alpha-\alpha'|\leq Md} (|\nabla H(X(\alpha)-X(\alpha'))| + |\nabla H(X(\beta)-X(\alpha'))|)|\nabla X(\alpha)| |q_0(\alpha')| d\alpha'\\
        &\leq C \Np{q_0}\Np{\nabla X }  \bigg(\int_{\{|\alpha-\alpha'|\leq Md\}\cap \N} |\nabla H(X(\alpha)-X(\alpha'))|d\alpha' \\ &+\int_{\{|\alpha-\alpha'|\leq Md\}\cap \F} |\nabla H(X(\alpha)-X(\alpha'))|d\alpha' + \int_{\{|\alpha-\alpha'|\leq Md\}\cap \N[\beta]} |\nabla H(X(\beta)-X(\alpha'))|d\alpha' \\&+\int_{\{|\alpha-\alpha'|\leq Md\}\cap \F[\beta]} |\nabla H(X(\beta)-X(\alpha'))|d\alpha'\bigg) \\ &\leq C \Np{q_0}\varepsilon ( d+d^2).
    \end{align*}

    Finally, for $I_2$ we use the mean value inequality  to get
    \begin{align*}
        |I_2|&\leq d C \Np{\nabla X} \Np{q_0} \int_{\{|\alpha-\alpha'|\geq Md\}\cap B_R(0)} |\nabla^2 H(\xi(\alpha'))| d\alpha '
    \end{align*}
    where $|\xi(\alpha')| \asymp |\alpha-\alpha'|.$ Indeed, for some $\theta\in(0,1),$ we have $$\xi=X(\beta)-X(\alpha')+\theta(X(\alpha)-X(\beta))= X(\alpha)-X(\alpha')+(1-\theta)(X(\beta)-X(\alpha)),$$    
    so using that $X\in U_\delta$ is bi-Lipschitz we get
    \begin{align*}
         |\xi|&\geq |X(\alpha)-X(\alpha')|-|X(\beta)-X(\alpha)|\geq (1-\delta)|\alpha-\alpha'|-(1+\delta)d\\
         &\geq \frac{(1-\delta)M-(1+\delta)}{M}|\alpha-\alpha'|\eqqcolon c_1 |\alpha-\alpha'|
    \end{align*}
    and
    \begin{align*}
         |\xi|&\leq |X(\beta)-X(\alpha')|+|X(\alpha)-X(\beta)|\leq (1+\delta)[|\beta-\alpha'|+d]\\
         &\leq (1+\delta)[|\alpha-\alpha'|+2d]\leq \frac{(1+\delta)(M+2)}{M} |\alpha-\alpha'|\eqqcolon c_2 |\alpha-\alpha'|.
    \end{align*} 

    Therefore,     
    \begin{align*}
        |I_2|&\leq C d  \Np{q_0} \left(\int_{\substack{
    |\alpha-\alpha'|\ge Md \\
    \varepsilon|\xi(\alpha')|\le 1 \\
    \alpha'\in B_R(0)}} |\nabla^2 H(\xi(\alpha'))| d\alpha '+ \int_{\substack{
    |\alpha-\alpha'|\ge Md \\
    \varepsilon|\xi(\alpha')|\geq 1 \\
    \alpha'\in B_R(0)}} |\nabla^2 H(\xi(\alpha'))| d\alpha '\right)\\
    &\leq C d  \Np{q_0} \left(\int_{\substack{
    Md \leq |\alpha-\alpha'|\leq \frac{1}{\varepsilon c_1} \\    
    \alpha'\in B_R(0)}} \frac{\varepsilon^2}{|\xi(\alpha')|} d\alpha '+ \int_{B_R(0)} C_2 \varepsilon d\alpha '\right)\\
    &\leq C d  \Np{q_0} \left(\int_{\substack{
    0 \leq |\alpha-\alpha'|\leq \frac{1}{\varepsilon c_1} \\    
    \alpha'\in B_R(0)}} \frac{\varepsilon^2}{c_1|\alpha-\alpha'|} d\alpha '+ \int_{B_R(0)} C_2 \varepsilon d\alpha '\right)\\
        &\leq C(1+R^2) d\Np{q_0}\varepsilon.
    \end{align*}
    
    Gathering the previous bounds we get that $\Np{F^\varepsilon-F^0},$ $\Np{\nabla (F^\varepsilon-F^0)}$ and $\SNa{\nabla(F^\varepsilon-F^0)}$ all converge to $0$ when $\varepsilon\to0$ uniformly on $X\in U_{\delta}.$ Hence, $\NUa{F^\varepsilon(X)-F^0(X)}\to 0$ uniformly.
    \end{proof}

    \begin{remark}
        While this paper establishes the existence and boundary regularity of vortex patches for the \eqref{def:QGSW} equations (Theorem \ref{th:vortex-patch}), the convergence result of Theorem \ref{th:limitRegular} does not apply to vortex patches. The question of strong convergence of QGSW vortex patches to Euler vortex patches as $\varepsilon\to 0$ has been addressed recently by Houamed and the first author in \cite{Houamed_2025} where the Extrapolation Compactness method and Littlewood–Paley theory are used.
    \end{remark}

\section*{Funding and Acknowledgments}
This research is supported by the projects ``Análisis y Ecuaciones en Derivadas Parciales'' (Refs. PID2020-112881GB-I00 and PID2024-155320NB-I00) of the Spanish Ministry of Science, Innovation and Universities.

\bibliographystyle{plain} 
\bibliography{biblio}  
\end{document}